\documentclass[11pt,reqno]{amsart}
\usepackage[margin=1in]{geometry}
\usepackage{amsmath,amssymb,amsthm,graphicx,amsxtra, setspace}
\usepackage[utf8]{inputenc}
\usepackage{mathrsfs}
\usepackage{hyperref}
\usepackage{upgreek}
\usepackage{mathtools}
\usepackage[dvipsnames]{xcolor}
\usepackage[mathcal]{euscript}
\usepackage{amsmath,units}
\usepackage{pgfplots}
\usepackage{tikz}
\usepackage{verbatim}
\allowdisplaybreaks
\usepackage{dsfont}
\usepackage{fontenc}
\usepackage{textcomp}
\usepackage{marvosym}
\usepackage{eurosym}
\usepackage{upgreek}
\usepackage[pagewise]{lineno}

\DeclareMathAlphabet{\mathpzc}{OT1}{pzc}{m}{it}
\DeclareMathOperator*{\esssup}{ess\,sup}

\usepackage[cyr]{aeguill}

\colorlet{darkblue}{blue!50!black}

\hypersetup{
	colorlinks,%
	citecolor=blue,%
	filecolor=red,%
	linkcolor=red,%
	urlcolor=blue,%
	pdfnewwindow=true,%
	pdfstartview={FitH}
}

\newtheorem{theorem}{Theorem}[section]
\newtheorem{lemma}[theorem]{Lemma}
\newtheorem{proposition}[theorem]{Proposition}

\newtheorem{definition}[theorem]{Definition}

\newtheorem{remark}[theorem]{Remark}
\newtheorem{hypothesis}[theorem]{Hypothesis}

\let\originalleft\left
\let\originalright\right
\renewcommand{\left}{\mathopen{}\mathclose\bgroup\originalleft}
\renewcommand{\right}{\aftergroup\egroup\originalright}

\newcommand{\Tr}{\mathop{\mathrm{Tr}}}
\renewcommand{\d}{\/\mathrm{d}\/}

\def\w{\textbf{W}^{\varepsilon}_{{\theta}^{\varepsilon}}}

\def\e{\varepsilon}

\def\L{\mathbb{L}}
\def\A{\mathrm{A}}
\def\I{\mathrm{I}}
\def\F{\mathrm{F}}
\def\C{\mathrm{C}}

\def\J{\mathrm{J}}
\def\B{\mathrm{B}}
\def\D{\mathrm{D}}
\def\y{\boldsymbol{y}}

\def\Y{\boldsymbol{Y}}
\def\E{\mathbb{E}}
\def\X{\boldsymbol{X}}

\def\x{\boldsymbol{x}}

\def\h{\boldsymbol{h}}
\def\z{\boldsymbol{z}}
\def\v{\boldsymbol{v}}
\def\V{\mathbb{v}}
\def\w{\boldsymbol{w}}
\def\W{\mathrm{W}}
\def\G{\mathrm{G}}
\def\Q{\mathrm{Q}}
\def\M{\mathrm{M}}
\def\N{\mathbb{N}}

\def\no{\nonumber}
\def\V{\mathbb{V}}
\def\wi{\widetilde}
\def\Q{\mathrm{Q}}

\def\u{\mathrm{U}}
\def\P{\mathrm{P}}
\def\u{\boldsymbol{u}}
\def\H{\mathbb{H}}

\newcommand{\R}{\mathbb{R}}

\renewcommand{\d}{\/\mathrm{d}\/}

\newcommand{\Addresses}{{% additional braces for segregating \footnotesize
		\footnote{
			\noindent \textsuperscript{1,2}Department of Mathematics, Indian Institute of Technology Roorkee-IIT Roorkee,
			Haridwar Highway, Roorkee, Uttarakhand 247667, INDIA.\par\nopagebreak
			\noindent  \textit{e-mail:} \texttt{Manil T. Mohan: maniltmohan@ma.iitr.ac.in, maniltmohan@gmail.com.}
			
			\textit{e-mail:} \texttt{Sagar Gautam: sagar\_g@ma.iitr.ac.in, sagargautamkm@gmail.com.}
			
			\noindent \textsuperscript{*}Corresponding author.
			
			\textit{Key words:}  Convective Brinkman-Forchheimer equations,  invariant measure, Kolmogorov equation, carr\'e du champs identity, infinite horizon  problems, essential $m-$dissipativity. 
			
			Mathematics Subject Classification (2020): Primary  47D07, 60H15, 47H06; Secondary 60H30, 49L12
			
}}}

\begin{document}
	
	%	\linenumbers
	
	\title[Kolmogorov equations associated with SCBF]{Kolmogorov equations for stochastic convective Brinkman-Forchheimer equations forced by L\'evy Noise and its application to infinite horizon problems
		\Addresses}
	\author[S. Gautam and M. T. Mohan]
	{Sagar Gautam\textsuperscript{1} and Manil T. Mohan\textsuperscript{2*}}

	\maketitle
	
	\begin{abstract}
		This article examines the Kolmogorov equation corresponding to the following stochastic two- and three-dimensional incompressible ($\nabla\cdot\boldsymbol{u}=0$) convective Brinkman-Forchheimer equations, also known as the damped Navier-Stokes equations, driven by L\'evy noise on the torus: 
		\begin{align*}
		\d\boldsymbol{u}+[-\mu\Delta\boldsymbol{u}+(\boldsymbol{u}\cdot\nabla)\boldsymbol{u}+\alpha\boldsymbol{u}+\beta|\boldsymbol{u}|^{r-1}\boldsymbol{u}+\nabla p]\d t
		=\sqrt{\Q}\d\W+\int_{Z}\sigma(t,z)\widetilde{\pi}(\d t,\d z),
		\end{align*}
		where $\mu,\alpha,\beta>0$ are physical constants; $\mathrm{Q}$ is a non-negative, trace-class operator; $\mathrm{W}$ is a cylindrical Wiener process on a Hilbert space; $\sigma$ represents the jump-noise coefficient; $(Z,\mathscr{B}(Z))$ is a measurable space; $\pi$ is a time-homogeneous Poisson random measure; and $\widetilde{\pi}$ denotes its compensator.
		The main contribution of this work is the establishment of the essential $m$-dissipativity of the corresponding Kolmogorov operator, a property that has received limited attention in the existing literature for systems driven by jump-type noise. \emph{Our main innovation is that, in contrast to traditional techniques which crucially depend on exponential moment estimates, we utilize the intrinsic structure of the absorption term $\alpha\boldsymbol{u}+\beta|\boldsymbol{u}|^{r-1}\boldsymbol{u}$ to dispense with these requirements. This allows us to establish the essential $m$-dissipativity of the Kolmogorov operator without the need for exponential moments.}
		In particular, for $r>3$ in two dimensions and for $3< r \leq 5$, as well as for $r=3=$ under the condition $2\beta\mu \geq 1$, in three dimensions, the absorption term supplies enough regularization to eliminate the need for exponential moment estimates.
		We also establish the ``Carr\'e du Champ identity,'' derive perturbation results for the corresponding Kolmogorov operator, and apply the developed framework to an infinite-horizon stochastic optimal control problem, demonstrating the solvability of the associated infinite-dimensional Hamilton-Jacobi-Bellman (integro-differential) equation.
%		 Moreover, we explicitly highlight three central contributions, the derivation of the Carre du Champ identity, the analysis of perturbation results for the associated Kolmogorov operator, and the application of these findings to an infinite horizon optimal control problem.
	\end{abstract}

	\section{Introduction}\label{sec1}\setcounter{equation}{0}
	 The convective Brinkman-Forchheimer (CBF) equations extend the classical Navier-Stokes equations to model fluid flow in saturated porous media by accounting for viscous diffusion (the Brinkman term), Darcy resistance, and inertial drag arising from the Forchheimer correction (\cite{CLF,SGMTM}).
	 The stochastic convective Brinkman-Forchheimer (SCBF) system captures the impact of random perturbations on fluid motion in saturated porous media. To set the stage, we begin by outlining its mathematical formulation.
	Let $(\Omega,\mathscr{F},\mathbb{P})$ be a complete probability space equipped with the filtration $\{\mathscr{F}_t\}_{t\geq0}$ satisfying the usual conditions. In this work, we consider the SCBF equations perturbed by Gaussian and additve jump noise on a $d-$dimensional torus $\mathbb{T}^d$, where $d\in\{2,3\}$. It describes the fluid flow in terms of the velocity vector field $\u(\cdot):[0,T]\times\mathbb{T}^d\times\Omega\to\R^d$, $T<+\infty$, and the pressure $p(\cdot):[0,T]\times\mathbb{T}^d\times\Omega\to\R$, and is given by
	\begin{equation}\label{scbfjump}
		\left\{
		\begin{aligned}
		\d\u(t)+[-\mu\Delta\u(t)&+(\u(t)\cdot\nabla)\u(t)+\alpha\u(t)+\beta|\u(t)|^{r-1}\u(t)+\nabla p(t)]\d t
	\\&=\sqrt{\Q}\d\W(t)+\int_{Z}\sigma(t,z)\wi\pi(\d t,\d z), \ \text{ in } \ \mathbb{T}^d\times(0,T), \\ \nabla\cdot\u&=0, \ \text{ in } \ \mathbb{T}^d\times[0,T], \\
			\u(0)&=\u_0 \ \text{ in } \ \mathbb{T}^d,\\
			\int_{\mathbb{T}^d}p(x,t)\d x&=0, \ \text{ in } \ (0,T),
		\end{aligned}
		\right.
	\end{equation}
  where $\pi$ is a time-homogeneous Poisson random measure, $\widetilde{\pi}$ denotes its compensator,  $(Z,\mathscr{B}(Z))$ is a measurable space and $\sigma:[0,T]\times \mathbb{T}^d \times Z \to\R^d$ is the noise coefficient. The final condition in \eqref{scbfjump} is imposed to ensure the uniqueness of the pressure $p$. The constant $\mu>0$ is the Brinkman coefficient and represents the effective viscosity of the fluid. The parameters $\alpha>0$ and $\beta>0$ denote the Darcy and Forchheimer coefficients, respectively, with $\alpha$ characterizing the permeability of the porous medium and $\beta$ reflecting its porosity. The exponent $r\in[1,\infty)$ is referred to as the absorption exponent, with $r=3$ serving as the critical value.

	\subsection{Literature survey}
	A substantial literature has developed around the stochastic Navier-Stokes equations (SNSE) and related models since the seminal work of Benssoussan and Temam \cite{ABRT}. Many studies have focused on SNSE driven by Gaussian noise, addressing questions of existence and uniqueness of solutions; see, for example, \cite{ZBEM,AD,ffL,FFDG,JLMSS,RMBLR,MRXZ} and the references therein. In particular, by exploiting the monotonicity of the linear and nonlinear operators together with a stochastic variant of the Minty-Browder technique, the authors of \cite{MTM8} established global existence and uniqueness of strong solutions to the SCBF equations on general unbounded domains under multiplicative Gaussian noise in both two and three dimensions. However, Gaussian perturbations often fail to capture the large fluctuations and sudden unpredictable events encountered in many real-world applications, including finance, economics, physics, biology, and chemistry. 
	To address these shortcomings, L\'evy type perturbations has been introduced, as they are capable of representing discontinuities and jump behaviours that frequently occur in real world phenomena. Consequently, SPDEs driven by L\'evy type noise provide a more realistic framework for modelling such phenomena. The study of Navier-Stokes equations and related models driven by random excitations with jumps, have been widely studied in the literature. 
	
	The SNSE driven by L\'evy noise have been extensively studied in recent years.
    The existence and uniqueness of strong solutions for the two-dimensional SNSE were established via the Galerkin approximation method in \cite{ZBEJ} (see also \cite{ZDYX}). In \cite{BPWF1}, the authors proved the existence and uniqueness of pathwise strong solutions for the 2D SNSE with finite L\'evy measure by employing the local monotonicity method combined with a generalized Minty-Browder technique. Later, the global existence and uniqueness of strong solutions for the 2D SNSE under Lipschitz conditions on bounded sets and linear growth assumptions on the jump coefficient were obtained in \cite{ZXJ}.
	For the three dimensional case, the existence and uniqueness of strong solutions for the damped SNSE, together with exponential stability and the existence of invariant measures, were investigated in \cite{HGHL}. The existence and uniqueness of martingale solutions and the corresponding Markov selections for the 3D SNSE were established in \cite{ZDJZ}. Furthermore, \cite{KSSS1} studied the existence and uniqueness of martingale solutions for both 2D and 3D SNSE driven by It\^o-L\'evy noise in bounded and unbounded domains (see also \cite{KSSS2} for results on invariant measures). More recently, the existence of weak martingale solutions for both 2D and 3D SNSE with jump L\'evy processes was proved in \cite{ZBTSEM}. In addition, the solvability of local mild solutions in $\L^p-$space for the 2D and 3D SNSE was investigated in \cite{BPWF}. Moreover, in \cite{MT2}, the author proved existence and uniqueness of strong solutions to the SCBF equations driven by multiplicative L\'evy noise on bounded domains in both two and three dimensions for $r>3$, and for $r=3$ under the condition $2\beta\mu\geq 1$ (see also \cite{MTMJEE} for large deviation principle of SCBF equations driven by L\'evy noise). Further, using fixed-point arguments, the existence and uniqueness of a pathwise mild solution to SCBF equations up to a random time have been established in \cite{MTMLP}.
	
     The works \cite{ASJZ1} and \cite{ASJZ2} studied Hamilton-Jacobi-Bellman integro-PDEs in infinite-dimensional Hilbert spaces associated with stochastic control problems driven by L\'evy noise, developing a viscosity solution framework together with comparison, existence, and dynamic programming principles, and illustrating the theory through controlled SPDE models. 
%	\begin{align*}
%		\u(\cdot)\in\mathrm{L}^2\big(\Omega;
%		\mathrm{L}^{\infty}(0,T;\L^2(\mathcal{O}))\cap
%		\mathrm{L}^2(0,T;\mathbb{H}_0^1(\mathcal{O}))\big)\cap
%		\mathrm{L}^{r+1}(\Omega;\mathrm{L}^{r+1}(0,T;\L^{r+1}(\mathcal{O})))
%	\end{align*}
%	with $\mathbb{P}-$paths in $\D([0,T];\L^2(\mathcal{O}))$, the space of all
%	c\'adl\'ag functions from $[0,T]$ to $\L^2(\mathcal{O})$.
%	In general, the analytical methods developed for SPDEs driven by Gaussian noise are not directly applicable to those influenced by jump-type noise, necessitating new and more refined techniques. For further details, we refer the reader to the previously cited works and the references therein. 
	% In finite-dimensional spaces, Kolmogorov equations are studied well in the literature; for example, see \cite{VIMR,VNMRK} and the references therein. 
	% 
	Kolmogorov equations associated with SPDEs are of significant interest, as they are partial differential equations whose unknown depends on time and on an infinite-dimensional variable. The theory of infinite-dimensional Kolmogorov equations has been extensively developed (cf. \cite{AMAR,SC1,gdp1,gdp7,FFDC,mr13,MRkm, MH1}). A central difficulty lies in understanding the relationship between the infinitesimal generator $\mathcal{N}_2$ of the transition semigroup in $\mathrm{L}^2$-spaces and the abstract Kolmogorov operator $\mathcal{N}_0$. The main challenge is to prove that $\mathcal{N}_2$	is the closure of $\mathcal{N}_0$, a property that remains open in certain cases. In the literature, this is expressed by stating that $\mathcal{N}_0$ is essentially $m-$dissipative. The essential 
$m-$dissipativity of the Kolmogorov operator associated with the SNSE and related models has been thoroughly investigated (see, for instance, \cite{VBGD,VbGdp,WSkm,DYkm,DYkm1}), where exponential moment estimates play a crucial role. However, to the best of our knowledge, the Kolmogorov equation for SNSE driven by L\'evy noise or for related SPDE models in this setting has not been studied in the existing literature. A closely related result appears in \cite{BXY} (see also \cite{LMan} for the space-time white noise case), where the authors investigate the Kolmogorov operator for the one-dimensional stochastic Burgers equation. Their analysis concerns the transition semigroup acting on weighted spaces of continuous functions, and they prove that the infinitesimal generator coincides with the closure of the associated Kolmogorov operator in a suitable topology, though not in an $\mathrm{L}^2$-framework. See \cite{AMAR,Gdpmn,MH1,mr14} for further insights into infinite-dimensional Kolmogorov equations.
	\subsection{Difficulties and approaches}\label{diffapp}
	We now briefly outline the main challenges and difficulties addressed in this work. To that end, consider the following infinite-dimensional Kolmogorov equations associated with the SCBF system \eqref{scbfabs}:
	\begin{align}\label{kolme}
		&\lambda\uppsi(\y)-\frac{1}{2}\Tr\left[\Q\mathcal{D}_{\y}^2\uppsi(\y)\right]+(\mu \mathcal{A}\y+ \alpha\y+ \mathcal{B}(\y)+\beta\mathcal{C}(\y),\mathcal{D}_{\y}\uppsi(\y))
		\nonumber\\&\quad-
			\int_{Z}\left[\uppsi(\y+\G(z))-\uppsi(\y)-(\G(z),\mathcal{D}_{\y}\uppsi(\y))\right]
		\lambda(\d z)=\mathfrak{f}(\y), \ \lambda>0,
	\end{align}
	where $\uppsi(\cdot):\H\to\R$ is the unknown and $\mathfrak{f}(\cdot):\H\to\R$ is  some given function. Furthermore, here, $\D_{\x}$ denotes the derivative with respect to $\x$, and $\Tr$ denotes the trace. It is widely recognized in the literature that the solution $\X(\cdot)=\mathscr{P}\u(\cdot)$ of the system \eqref{scbfabs} (where $\mathscr{P}:\L^2(\mathcal{O})\to\H$ is the Leray projection) is closely related to the solution of the Kolmogorov equation \eqref{kolme}. This relationship is expressed through the following formal identity (cf. \cite{EANK}):
	\begin{align}\label{yxsta}
	 \uppsi(\x)=\int_0^{\infty} e^{-\lambda t} \E[\mathfrak{f}(\X(t,\x))]\d t, \ \x\in\H,
	\end{align}
 However, establishing that \eqref{yxsta} indeed provides a solution to \eqref{kolme} (in a sense that will be made precise later) constitutes one of the major challenges of this work. The main difficulty arises from the need to justify, first of all, the following derivative formulas for sufficiently regular $\uppsi$, obtained through formal calculations:
	\begin{align}
		\mathcal{D}_{\y}\uppsi(\y)\h&=
		\int_0^{\infty} e^{-\lambda t} \E\left[\big(\mathcal{D}_{\y}\mathfrak{f}(\Y(t,\y)),\mathcal{D}_{\y}\Y(t,\y)\h\big)\right]\d t, \label{df1}\\
		\mathcal{D}_{\y}^2\uppsi(\y)(\h,\h)&=\int_0^{\infty} e^{-\lambda t} \E\left[\mathcal{D}_{\y}^2\varphi(\Y(t,\y))
		\big(\mathcal{D}_{\y}\Y(t,\y)\h,\mathcal{D}_{\y}\Y(t,\y)\h\big)\right]\nonumber\\&\qquad+
		\E\left[\big(\mathcal{D}_{\y}\uppsi(\Y(t,\y)),\mathcal{D}_{\y}^2\Y(t,\y)(\h,\h)\big)\right]\d t
		\label{df2},
	\end{align}
	for $t\geq 0$ and $\y,\h\in\H$. Then, we use It\^o's formula to show that $\uppsi$ is a solution of \eqref{kolme}.
	Next, we apply It\^o's formula in order to demonstrate that $\uppsi$ is indeed a solution of \eqref{kolme}.
	This step is far from straightforward, because the identities in \eqref{df1}-\eqref{df2} involve the first- and second-order derivatives $\mathcal{D}_{\y}\Y(t,\y)$ and $\mathcal{D}_{\y}^2\Y(t,\y)$ of the solution to \eqref{scbfabs}, whose integrability is not known in general. Such regularity is guaranteed only when the coefficients $\mathcal{B}(\cdot)$ and $\mathcal{C}(\cdot)$ are sufficiently smooth and the covariance operator $\Q$ is of trace class (see \cite[Theorem 7.5.1, Chapter 7]{gdp7}).

	%	However, this is not straightforward since the formulas \eqref{df1}-\eqref{df2} involve the first and second-order derivatives $\D_{\x}\X(t,\x)$ and $\D_{\x}^2\X(t,\x)$ of the solution to \eqref{scbfabs}, whose integrability is not known in general. 
	%	But this is not easy, as the above derivative formulae involve the derivatives $\D_{\x}\X(t,\x)$ and $\D_{\x}^2\X(t,\x)$ of the solution of \eqref{32}, which are, in general, we do not know whether they are integrable or not. 
%	This is true only when the coefficients $\mathcal{B}(\cdot)$ and $\mathcal{C}(\cdot)$ are regular and $\Q$ is of trace class (see \cite[Theorem 7.5.1, Chapter 7]{gdp7}). 

To address this challenge, we analyze the Kolmogorov equation \eqref{kolme} within the space $\mathrm{L}^2(\H,\eta)$, where $\eta$ represents the invariant measure of the transition semigroup $\{\P_t\}_{t\geq 0}$. A substantial body of literature examines the existence and uniqueness of invariant measures for SPDEs (see \cite{gdp2,AD,MHJC} and references therein). For the SCBF system \eqref{scbfabs}, the existence of an invariant measure is derived directly from energy estimates (see Theorem \ref{extunjump}), in conjunction with the classical Krylov-Bogoliubov theorem. Uniqueness is guaranteed by the exponential stability of the solution $\Y(\cdot)$ to \eqref{scbfabs} (see Theorem \ref{uniqinv}).

%	We point out that unlike other works on Kolmogorov equations for SNSE and related models (see \cite{VBGD,VBGA,VbGdp,gdp1,sgmtm8}), the uniqueness of the invariant measure is heavily depend on the bound of the derivative of the solution $\X(\cdot)$ of the system \eqref{scbfabs}. The main obstacle arises from the absence of an exponential stability estimate for such systems. This issue was resolved in \cite{MT2}, where the author proves the uniqueness of the invariant measure for the SCBF system \eqref{scbfabs} for $r>3$ and $r=3$ with $2\beta\mu\geq1$ in dimensions two and three, without relying on any bounds for the derivative of the solution $\X(\cdot)$.

A central and technically demanding part of our analysis is the proof of the essential $m$-dissipativity of the Kolmogorov operator associated with the SCBF system \eqref{scbfabs}. Specifically, one must verify that the Kolmogorov operator $\mathcal{N}_0$ (see \eqref{4p5}) is the closure of the infinitesimal generator $\mathcal{N}_2$ corresponding to the transition semigroup $\{\P_t\}_{t\ge 0}$. In the classical SPDE framework (and in fluid dynamics models), this step typically relies on delicate exponential moment estimates (see, for example, \cite{VBGD,VBGA,VbGdp,gdp1,DYkm,DYkm1}). For the L\'evy noise case, however, deriving such exponential moments becomes significantly more challenging. Indeed, to the best of our knowledge, exponential moment bounds for the SNSE driven by L\'evy noise are largely absent from the literature. The few available results, such as \cite{MTKSS,MTKSS1}, where exponential estimates are obtained only up to a stopping time using dynamic programming techniques for the 2D SNSE and the stochastic Burgers equation, require highly intricate and technical arguments.

		\subsection{Novelties and advantages}\label{noveadv}
		The absorption term in \eqref{scbfjump} is central to our analysis and is the key structural feature that permits a substantial simplification of the probabilistic estimates required in the study of the SCBF system \eqref{scbfabs} over classical SNSE. In earlier works \cite{VBGD,VBGA,VbGdp,gdp1,WSkm,DYkm,DYkm1} and references therein, the $m$-dissipativity of the Kolmogorov operator and the bounds on the derivatives of the solution $\Y(\cdot)$ to the SCBF system \eqref{scbfabs} were obtain under a substantial technical hypothesis: the existence of an exponential moment estimate $$\E\left[\int_{\H}e^{\kappa\|\y\|_{\H}^2}\d\eta\right]<\infty,$$ for some $\kappa>0$. The exponential moment bound above is then employed to control mixed exponential-interpolation type integrals of the form
	\begin{align}\label{mxdterp}
		\int_{\H}e^{\kappa\|\y\|_{\H}^2}\|\mathcal{A}^{\delta}\y\|_{\H}^{m}
		\|\mathcal{A}^{\delta+\frac12}\y\|_{\H}^{2}\d\eta, \ 
		\text{ for some } \  m\in\N \ \text{ and for appropriate } \ \delta>0.
	\end{align}
	These estimates play a key role in establishing the integrability of $\|\mathcal{A}\y\|_{\H}^2$ with respect to the invariant measure $\eta$ (that is, bounds on $\int_{\H}\|\mathcal{A}\y\|_{\H}^2\d\eta$), particularly in the presence of nonlinear terms; see, for example, \cite{VBGD, VbGdp, sgmtm8}.
	%which in turn are crucial for proving the integrability of $\|\mathcal{A}\x\|_{\H}^2$ with respect to the invariant measure $\eta$ (that is, estimates of $\int_{\H}\|\mathcal{A}\y\|_{\H}^2\d\eta$) due to the presence of nonlinearities, for instance, see the works \cite{VBGD,VbGdp,sgmtm8}. 
%	Due to the presence of the nonlinear operators $\mathcal{B}(\cdot)$ and $\mathcal{C}(\cdot)$ in the SCBF system \eqref{scbfabs}, the integrability of the term $\|\mathcal{A}\x\|_{\H}^2$ with respect to the invariant measure $\eta$ is not immediate and that is why one has to relay on \eqref{mxdterp} (see our previous work \cite{sgmtm8} in this context for 2D SCBF system with $r=1,2,3$). It is mainly required while proving $\mathcal{N}_2$ is an extension of $\mathcal{N}_0$.
	Because the SCBF system \eqref{scbfabs} involves the nonlinear operators  $\mathcal{B}(\cdot)$ and $\mathcal{C}(\cdot)$, the integrability of the quantity $\|\mathcal{A}\y\|_{\H}^2$ with respect to the invariant measure $\eta$ is not immediate. This necessitates the use of estimates of the form \eqref{mxdterp} (see, in particular, our earlier work \cite{sgmtm8} for the 2D SCBF system with $r=1,2,3$). Such bounds are especially important in establishing that $\mathcal{N}_2$ indeed extends $\mathcal{N}_0$. 
	
	Importantly, the dissipation induced by the absorption term enable us to bypass this argument altogether. In the SCBF system \eqref{scbfabs}, the regimes  $r>3$ and $r=3$ with $2\beta\mu\geq1$ (in dimensions $d=2,3$) guarantee that the absorption term contributes strong enough coercivity to yield the required a-priori estimates for the solutions and its derivatives (see Lemma \ref{lem4.4} and Proposition \ref{lem5.2}). This enhanced dissipativity allows us to carry out the entire analysis without relying on exponential moment estimates, in sharp contrast to the approach taken in \cite{VBGD,VBGA,VbGdp,gdp1,DYkm,DYkm1}. Consequently
	\begin{itemize}
		\item we do not need to compute the complicated integral of the form \eqref{mxdterp};
		\item the proof that $\mathcal{N}_2$ is the closure of $\mathcal{N}_0$ becomes significantly simpler because the key estimate $\int_{\H}\|\mathcal{\A}\y\|_{\H}^2\d\eta<\infty$ can be obtained directly from the enhanced dissipation induced by the absorption term (see Lemma \ref{lem4.4} and Proposition \ref{lem5.2}). 
	\end{itemize}
	
	Another major novelty of our work is that we treat the Kolmogorov equation associated with the SCBF system \eqref{scbfabs} driven by additive L\'evy noise. \emph{To the best of our knowledge, this is the first work addressing essential $m$-dissipativity of the Kolmogorov operator associated with the SCBF system \eqref{scbfabs} (and, more generally the related NSE models) in the presence of L\'evy noise.} In the existing SNSE literature (see \cite{VBGD, VBGA, VbGdp, gdp1}), extensions to equations driven by jump noise are seldom considered. This is largely because the standard techniques rely crucially on exponential moment estimates, which are difficult, or in many cases impossible, to derive for general L\'evy noise.  By exploiting the absorption-induced coercivity described above, we bypass this difficulty and obtain the essential $m$-dissipativity of the Kolmogorov operator in the L\'evy noise (additive) setting without relying on exponential moment estimates. Alongside the dissipativity analysis, we also establish the Carr\'e du Champ identity and perturbation results for the Kolmogorov operator associated with the L\'evy driven SCBF system \eqref{scbfabs} (see Propositions \ref{carredu} and \ref{pertubkol}). Since the L\'evy case has not previously been addressed in this framework, we provide full proofs of these identities and estimates.
	
	To summarize, the main advantages and novelties of this paper compared with earlier works \cite{VBGD,VBGA,VbGdp,gdp1,DYkm,DYkm1} are:
	\begin{enumerate}
		\item\textbf{No exponential moments are required.} We do not assume or use exponential moment bounds $\E\big[e^{\kappa\|\y\|_{\H}^2}\big]$ nor do we estimate integrals of the form \eqref{mxdterp}.
		\item\textbf{Simpler and more direct proofs.} The absorption term allows direct control of $\int_{\H}\|\mathcal{A}\y\|_{\H}^2\d\eta$ and relative derivative bounds, which streamlines the argument that $\mathcal{N}_2$ is the closure of $\mathcal{N}_0$ and removes several lengthy interpolation/weighting estimates used in previous approaches. 
		\item\textbf{L\'evy noise-addtive case.} We treat Kolmogorov equation for the SCBF system under additive L\'evy noise. To the best of our knowledge, this is the first result of its kind, showing that the usual exponential-moment barrier can indeed be surmounted for jump noise models when the absorption hypothesis outlined above is employed.
		\item\textbf{Solving an infinite-dimensional HJB equation.} 
			We further apply our result to an infinite horizon stochastic optimal control problem, studying the solvability of the associated HJB equation. Due to the presence of  L\'evy noise, the HJB equation becomes an infinite-dimensional integro-differential equation. Using the  ``Carr\'e du Champ identity'' and the perturbation results, we prove the existence of solutions to this HJB equation by a fixed point argument. 
	\item\textbf{Broader applicability.} Because our assumptions on noise and moments are weaker, the methods developed here can be applied to a wider class of stochastic forcing terms and may be adaptable to other dissipative PDE with strong absorption-like nonlinearities.
	\end{enumerate}
	
	These features together constitute the main technical and conceptual novelty of the paper: \emph{The absorption term in \eqref{scbfjump} furnishes enough dissipation to replace exponential-moment techniques, allowing essential $m$-dissipativity of the Kolmogorov operator to be proved under significantly weaker and more natural assumptions, including the additive L\'evy noise setting.} Moreover, our findings provide a systematic method for analysing and solving an infinite-dimensional HJB equation associated with the SCBF system \eqref{scbfabs} and, more broadly, with L\'evy driven fluid models.
	
	\subsection{Organization of the article} The remainder of the paper is structured as follows. In the next section, we set up the functional framework used throughout the work. Section \ref{sec4} provides the necessary stochastic background and states the well-posedness result for the SCBF system \eqref{scbfabs} (see Theorem \ref{extunjump}). In Section \ref{secvarmea}, we prove the existence and uniqueness of the invariant measure (Theorem \ref{uniqinv}) for the transition semigroup $\{\P_t\}_{t\geq 0}$ associated with \eqref{scbfabs}, and we also obtain a bound for $\int_{\H}\|\mathcal{A}\x\|_{\H}^2\d\eta$ (Proposition \ref{lem5.2}). Section \ref{disscore} contains the proof of our main result (Theorem \ref{thm4.7}), which follows from an application of the Lumer–Phillips theorem. Finally, in Section \ref{sec5}, we illustrate an application of the main theorem to an infinite-horizon optimal control problem.

	\section{Mathematical framework}\label{sec2}\setcounter{equation}{0}
	This section introduces the function spaces essential for achieving the primary objectives of this work, following the framework given in \cite{gdp7} and \cite{JCR1, JCR}.

	\subsection{Sobolev space of periodic functions} 
	Let $\C_{\mathrm{p}}^{\infty}(\mathbb{T}^d;\R^d)$ denote the space of all infinitely differentiable $\R^d-$valued functions $\u$ defined on the $d-$dimensional torus $\mathbb{T}^d$, satisfying periodic boundary conditions of the form 
	$\u(x+e_{i},\cdot) = \u(x,\cdot)$, for all $x\in \R^d$ and $i=1,\ldots,d$, where $\{e_i\}_{i=1}^d$ is the canonical basis of $\R^d$. The periodic Sobolev space $\H_{\mathrm{p}}^s(\mathbb{T}^d):=\mathrm{H}_{\mathrm{p}}^s(\mathbb{T}^d;\mathbb{R}^d)$ is defined as the completion of $\C_{\mathrm{p}}^{\infty}(\mathbb{T}^d;\R^d)$  with respect to the Sobolev norm  $$\|\u\|_{{\H}^s_{\mathrm{p}}}:=\left(\sum_{0\leq|\boldsymbol\alpha|
		\leq s} \|\D^{\boldsymbol\alpha}\u\|_{\mathbb{L}^2(\mathbb{T}^d)}^2\right)^{1/2}.$$ 	
%	According to \cite[Proposition 5.39]{JCR1}, the space $\H_{\mathrm{p}}^s(\mathbb{T}^d)$, for $s\geq0$, can be equivalently characterized as
%	$$\left\{\u:\u=\sum_{k\in\mathbb{Z}^d}\y_{k}\mathrm{e}^{2\pi i k\cdot \xi /  \mathrm{L}},\ \overline{\u}_{k}=\u_{-k}, \  \|\u\|_{{\H}^s_f}:=\left(\sum_{k\in\mathbb{Z}^d}(1+|k|^{2s})|\u_{k}|^2\right)^{1/2}<\infty\right\}.$$ Furthermore, by \cite[Propositions 5.38]{JCR1} the norms $\|\cdot\|_{{\H}^s_{\mathrm{p}}}$ and $\|\cdot\|_{{\H}^s_f}$ are equivalent. 
	\begin{remark}
	Unlike in the NSE case, we do not assume that $\u$ has zero mean, because the absorption term $|\u|^{r-1}\u$ does not preserve this property (see \cite{KWH}). Therefore, the Poincar\'e inequality cannot be used, and we work with the full $\H^1$-norm.
	\end{remark}
	We consider the space 
	\begin{align*} 
	\mathcal{V}:=\{\u\in\C_{\mathrm{p}}^{\infty}(\mathbb{T}^d;\R^d):\nabla\cdot\u=0\},
	\end{align*}
	that is, the set of smooth, periodic, divergence free vector fields on the torus $\mathbb{T}^d$. 
	
	Let $\H$ and $\widetilde{\L}^{p}$ denote the closure of $\mathcal{V}$ in the spaces $\mathrm{L}^2(\mathbb{T}^d;\R^d)$ and $\mathrm{L}^p(\mathbb{T}^d;\R^d)$, respectively, for $p\in(2,\infty]$. Furthermore, We define the space $\V$ as the closure of the same set in the Sobolev space $\H^1_{\mathrm{p}}(\mathbb{T}^d)$. 
	
	The space $\H$ is equipped with the inner product $(\u,\v)=\int_{\mathbb{T}^d}\u(\xi)\cdot\v(\xi)\d \xi$ and the associated norm $\|\u\|_{\H}^2:=\int_{\mathcal{O}}|\u(x)|^2\d x$. On $\V$, we consider the inner product $(\u,\v)_{\V}:=(\u,\v)+(\nabla\u,\nabla\v)$ which induces the norm $	\|\u\|_{\V}^2:=\|\u\|_{\H}^2+\|\nabla\u\|_{\H}^2$. For $p\in(2,\infty)$, the space $\widetilde{\L}^{p}$ is endowed with the norm $\|\u\|_{\widetilde{\L}^p}^p:=\int_{\mathbb{T}^d}|\u(\xi)|^p\d \xi$, while for $p=\infty$, the norm on $\widetilde{\L}^{\infty}$ is given by
	$	\|\u\|_{\widetilde{\L}^{\infty}}:=\esssup_{\xi\in\mathbb{T}^d}|\u(\xi)|$.

	We denote $\langle \cdot,\cdot\rangle$ the duality pairing between $\V$  and its dual space $\V'$, as well as between $\widetilde{\L}^p$ and its dual $\widetilde{\L}^{p'}$, where the exponents $p$ and $p'$ satisfy the relation $\frac{1}{p}+\frac{1}{p'}=1$. Moreover, the Hilbert space $\H$ is naturally identified with its dual $\H'$. We equip the intersection space $\V\cap\widetilde{\L}^{p}$ with the norm $\|\u\|_{\V}+\|\u\|_{\widetilde{\L}^{p}},$ for $\u\in\V\cap\widetilde{\L}^p$. Its dual space, $\V'+\widetilde{\L}^{p'}$, is endowed with the norm $$\inf\left\{\max\left(\|\v_1\|_{\V'},\|\v_1\|_{\widetilde{\L}^{p'}}\right):\v=\v_1+\v_2, \ \v_1\in\V', \ \v_2\in\widetilde{\L}^{p'}\right\}.$$   
%	We first note that $\mathcal{V}\subset\V\cap\widetilde{\L}^{p}\subset\H$ and $\mathcal{V}$ is dense in $\H,\V$ and $\widetilde{\L}^{p},$ and hence $\V\cap\widetilde{\L}^{p}$ is dense in $\H$. 
%	We have the following continuous  embedding also:
%$$\V\cap\widetilde{\L}^{p}\hookrightarrow\H\equiv\H'\hookrightarrow\V'+\widetilde\L^{\frac{p}{p-1}}.$$ One can define equivalent norms on $\V\cap\widetilde\L^{p}$ and $\V'+\widetilde\L^{\frac{p}{p-1}}$ as  (see \cite{NAEG})
%\begin{align*}
%	\|\u\|_{\V\cap\widetilde\L^{p}}=\left(\|\u\|_{\V}^2+\|\u\|_{\widetilde\L^{p}}^2\right)^{1/2}\ \text{ and } \ \|\u\|_{\V'+\widetilde\L^{\frac{p}{p-1}}}=\inf_{\u=\v+\w}\left(\|\v\|_{\V'}^2+\|\w\|_{\widetilde\L^{\frac{p}{p-1}}}^2\right)^{1/2}.
%\end{align*}

 Apart from above spaces, the following functional spaces 
		(see \cite[Chapter 1]{gdp1}) are frequently used in the rest of the paper.
		\begin{enumerate}
				\item The set $\C_b(\H):=\C_b(\H;\R)$ denotes the Banach space of all bounded continuous functions  $\psi:\H\to\R$ endowed with the norm $\|\psi\|_{0}:=\sup\limits_{\y\in\H}|\psi(\y)|<+\infty.$ In a similar way, we can define the space $\C_b(\H;\H)$.
				
			 \item  
			%			For $\psi:\H\to\R$ and $\x,\h\in\H$, we set 
			%			$\big(\D_{\x}\psi(\x),\h\big)=\lim\limits_{\tau\to0}\frac{1}{\tau}
			%			[\psi(\x+\tau\h)-\psi(\x)]$, provided the limit exists.  
			We denote by $\C_b^1(\H)\subset\C_b(\H)$, the space of all bounded continuous functions $\psi:\H\to\R$ which are Fr\'echet differentiable on $\H$ with continuous and bounded derivative $\mathcal{D}_{\y}\psi$ such that
			$\|\psi\|_{1}:=\|\psi\|_{0}+\sup\limits_{\y\in\H}|\mathcal{D}_{\y}\psi(\y)|<+\infty.$
			
			\item Let $\mathscr{B}_b(\H)$ is the Banach space of all bounded and Borel measurable functions $\psi:\H\to\R$ endowed with the norm
			$\|\psi\|_{0}<+\infty$, for all $\psi\in\C_b(\H)$. 
%			Similarly, one can define the notion of $\mathscr{B}_b(\H;\H)$.
			
%			\item Let $\mathscr{B}_b(\H;\H)$ be the space of bounded Borel measurable functions $\mathbf{F}:\H\to\H$ such that 
%			$\|\mathbf{F}\|_{0}:=\sup_{\x\in\H}\|\mathbf{F}(\x)\|_{\H}<+\infty$, for $\mathbf{F}\in\C_b(\H;\H)$.
	\end{enumerate}

%	\subsubsection{$\mathrm{L}^2$-Hilbert spaces \cite{gdp7} }
	Let $\eta$ be an invariant measure.
	We denote by $\mathrm{L}^2(\H,\eta)$, the equivalence class of all Borel measurable functions $\varphi:\H\to\R$, which are square integrable and endowed with the inner product (\cite{gdp7})
	\begin{align*}
		(\varphi,\psi)_{\mathrm{L}^2(\H,\eta)}:=\int_{\H} \varphi(\x)\psi(\x)\eta(\d\x), \  \varphi,\psi\in\mathrm{L}^2(\H,\eta),
	\end{align*}
	and norm 
	\begin{align*}
		\|\varphi\|_{\mathrm{L}^2(\H,\eta)}:=\left(\int_{\H} |\varphi(\x)|^2\eta(\d\x)\right)^{\frac{1}{2}}, \ \ \varphi\in\mathrm{L}^2(\H,\eta).
	\end{align*}
	Let us now consider the space $\L^2(\H,\eta;\H)$, of all equivalence classes of Borel square integrable functions $\mathtt{F}:\H\to\H$ such that
	\begin{align*}
		\|\mathtt{F}\|_{\L^2(\H,\eta;\H)}:=\left(\int_{\H}\|\mathtt{F}(\x)\|_{\H}^2
		\eta(\d\x)\right)^{\frac{1}{2}} <\infty.
	\end{align*}
	Moreover, $\L^2(\H,\eta;\H)$ is a Hilbert space equipped with the following inner product 
	\begin{align*}
		(\mathtt{F},\mathtt{G})_{\L^2(\H,\eta;\H)}:=\int_{\H}
		 (\mathtt{F}(\x),\mathtt{G}(\x))\eta(\d\x), \ \ \ \mathtt{F},\mathtt{G}\in\L^2(\H,\eta;\H).
	\end{align*} 

%	The elements of $\L^2(\H,\eta;\H)$ are called as $\mathrm{L}^2$-\emph{vector fields} (cf. \cite{gdp7}). For any $F\in\L^2(\H,\eta;\H)$,  we can write $F(\x)=\sum\limits_{k=1}^{\infty}(F(\x),\boldsymbol{e}_k)\boldsymbol{e}_k$, $\eta$-a.e., where $\{\boldsymbol{e}_k\}_{k=1}^{\infty}$ is a complete orthonormal basis in $\H$.
	
	\subsection{Linear operator}
	Let $\mathscr{P} : \L^2(\mathbb{T}^d)\to\H$ denote the \emph{Helmholtz-Hodge orthogonal or Leray projection}. From \cite[Section 2.1]{JCR}, we infer that  it is bounded and self-adjoint operator. We define
	\begin{equation*}
		\left\{
		\begin{aligned}
			\mathcal{A}\u:&=-\mathscr{P}\Delta\u,\;\u\in\D(\mathcal{A}),\\ \D(\mathcal{A}):&=\V\cap\H^{2}_{\mathrm{p}}(\mathbb{T}^d).
		\end{aligned}
		\right.
	\end{equation*}
	Note that in view of Parseval's identity and the definition of 
$\|\cdot\|_{\H^2_\mathrm{p}}-$norm, one can show that 
$\H^2_\mathrm{p}(\mathbb{T}^d)=\D(\I+\A):=\V_2$.

	\subsection{Bilinear operator}
	Let $b(\cdot,\cdot,\cdot):\V\times\V\times\V\to\R$ be a continuous trilinear form defined by
\begin{align*}
	b(\u,\v,\w)=\int_{\mathbb{T}^d}(\u(\xi)\cdot\nabla)\v(\xi)\cdot\w(\xi)\d \xi.
\end{align*} 
By the Riesz representation theorem, we can define $\mathfrak{B}(\cdot,\cdot):\V\times\V\to\R$ a continuous bilinear operator such that $\langle\mathfrak{B}(\u,\v),\w\rangle=b(\u,\v,\w)$ for all $\u,\v,\w\in\V$, which also satisfies (see \cite {Te})
\begin{align}\label{syymB}
	\langle\mathfrak{B}(\u,\v),\w\rangle=-\langle\mathfrak{B}(\u,\w),\v\rangle \ \text{ and } \
	\langle\mathfrak{B}(\u,\v),\v\rangle=0,
\end{align}
for any $\u,\v,\w\in\V$. We also denote $\mathfrak{B}(\u)= \mathfrak{B}(\u, \u)$. 
%	\begin{lemma}\cite{MT2}
%		The trilinear map $b(\cdot,\cdot,\cdot):\V\times\V\times\V\to\R$ has a unique extension to a bounded trilinear map from $(\V\cap\wi\L^{r+1})\times(\V\cap\wi\L^{r+1})\times\V$ into $\R$. Moreover, for $r\geq1$, the bilinear operator
%		$\mathfrak{B}(\cdot)$ maps from $\V\cap\wi\L^{r+1}$ into $\V^{\prime}+\wi\L^{\frac{r+1}{r}}$ and satisfies 
%		\begin{align}\label{2.9a}
%		\|\mathfrak{B}(\u)\|_{\V'+\widetilde{\L}^{\frac{r+1}{r}}}\leq\|\u\|_{\widetilde{\L}^{r+1}}^{\frac{r+1}{r-1}}\|\u\|_{\H}^{\frac{r-3}{r-1}}\  \text{ and } \ 
%		\|\mathfrak{B}(\u)\|_{\V^{\prime}}\leq\|\u\|_{\widetilde{\L}^{4}}^{2},
%	\end{align}
%	for all $\u\in\V\cap\wi\L^{r+1}$.
%	\end{lemma}
%\begin{lemma}\cite{}
%	For every $r>3$ and for all $\u,\v\in\V\cap\wi\L^{r+1}$, the bilinear operator $\mathfrak{B}(\cdot):\V\cap\wi\L^{r+1}\to\V^{\prime}+\wi\L^{\frac{r+1}{r}}$ is locally Lipschitz and satisfies 
%	\begin{align*}
%		\|\mathfrak{B}(\u)-\mathfrak{B}(\v)\|_{\V'+\widetilde{\L}^{\frac{r+1}{r}}}\leq \left(\|\u\|_{\H}^{\frac{r-3}{r-1}}\|\u\|_{\widetilde{\L}^{r+1}}^{\frac{2}{r-1}}+\|\v\|_{\H}^{\frac{r-3}{r-1}}\|\v\|_{\widetilde{\L}^{r+1}}^{\frac{2}{r-1}}\right)\|\u-\v\|_{\widetilde{\L}^{r+1}},
%	\end{align*} 
%		Moreover, for $r=3$ and for all $\u,\v\in\wi\L^4$, we have
%			\begin{align*}
%		\|\mathfrak{B}(\u)-\mathfrak{B}(\v)\|_{\V'+\widetilde{\L}^{\frac{4}{3}}}\leq \left(\|\u\|_{\widetilde{\L}^4}+\|\v\|_{\widetilde{\L}^4}\right)\|\u-\v\|_{\widetilde{\L}^4}.
%			\end{align*}
%	\end{lemma}
	
  \begin{lemma}\cite{MT2}
  	For $r>3$, the bilinear operator $\mathfrak{B}(\cdot)$ satisfies the following monotonicity estimate
	 \begin{align}\label{3.4}
	 	|\langle\mathfrak{B}(\u)-\mathfrak{B}(\v),\u-\v\rangle|\leq
	 	\frac{\mu }{2}\|\nabla(\u-\v)\|_{\H}^2 +\frac{\beta}{4}\||\v|^{\frac{r-1}{2}}(\u-\v)\|_{\H}^2 +\varrho\|\u-\v\|_{\H}^2,
	 \end{align}
	 where 
	 \begin{align}\label{eqn-varrho}
	 \varrho:=\frac{r-3}{2\mu(r-1)}\left[\frac{4}{\beta\mu (r-1)}\right]^{\frac{2}{r-3}}.
	 \end{align}
	\end{lemma}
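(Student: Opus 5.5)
Set $\w:=\u-\v$. By bilinearity and the second identity in \eqref{syymB},
\begin{align*}
\langle\mathfrak{B}(\u)-\mathfrak{B}(\v),\w\rangle=\langle\mathfrak{B}(\u,\w)+\mathfrak{B}(\w,\v),\w\rangle=\langle\mathfrak{B}(\w,\v),\w\rangle .
\end{align*}
Using the antisymmetry in \eqref{syymB} once more, this equals $-\langle\mathfrak{B}(\w,\w),\v\rangle=-b(\w,\w,\v)$. The plan is therefore to bound $|b(\w,\w,\v)|$. The point of moving the derivative onto the difference is that $\v$ then appears undifferentiated, so it can be absorbed by the weighted term $\||\v|^{\frac{r-1}{2}}\w\|_{\H}^2$ coming from the absorption. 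For general elements of $\V\cap\widetilde{\L}^{r+1}$ I would first argue with smooth fields from $\mathcal{V}$ and then pass to the limit by density, since the trilinear form extends continuously.

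Pointwise we have $|b(\w,\w,\v)|\le\int_{\mathbb{T}^d}|\w||\nabla\w||\v|\,\d\xi$. Applying Cauchy--Schwarz and then Young's inequality gives
\begin{align*}
|b(\w,\w,\v)|\le\|\nabla\w\|_{\H}\,\||\v|\w\|_{\H}\le\frac{\mu}{2}\|\nabla\w\|_{\H}^2+\frac{1}{2\mu}\int_{\mathbb{T}^d}|\v|^2|\w|^2\,\d\xi .
\end{align*}
To handle the last integral I split it as $\int|\v|^2|\w|^2=\int|\w|^{\frac{4}{r-1}}\big(|\v|^2|\w|^{\frac{2(r-3)}{r-1}}\big)$. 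Then I apply Hölder with exponents $\frac{r-1}{2}$ and $\frac{r-1}{r-3}$, which are admissible exactly because $r>3$. This yields
\begin{align*}
\int|\v|^2|\w|^2\le\Big(\int|\v|^{r-1}|\w|^2\Big)^{\frac{2}{r-1}}\Big(\int|\w|^2\Big)^{\frac{r-3}{r-1}} .
\end{align*}
Finally I use Young's inequality $ab\le\varepsilon a^{p}+C_\varepsilon b^{q}$ with $p=\frac{r-1}{2}$ and $q=\frac{r-1}{r-3}$. The parameter $\varepsilon$ is chosen so that $\frac{1}{2\mu}\varepsilon\,\||\v|^{\frac{r-1}{2}}\w\|_{\H}^2=\frac{\beta}{4}\||\v|^{\frac{r-1}{2}}\w\|_{\H}^2$, i.e. $\varepsilon=\frac{\beta\mu}{2}$.

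The only real work is the constant bookkeeping: checking that $\frac{1}{2\mu}C_\varepsilon$ equals $\varrho$ as defined in \eqref{eqn-varrho}. Writing the Young inequality in the weighted form $ab\le\frac{1}{p}(\delta a)^p+\frac{1}{q}(b/\delta)^q$ with $\frac{1}{q}=\frac{r-3}{r-1}$, and tuning $\delta$ so that the first term matches $\frac{\beta\mu}{2}$, should produce the factor $\frac{r-3}{r-1}\big[\frac{4}{\beta\mu(r-1)}\big]^{\frac{2}{r-3}}$. Together with the prefactor $\frac{1}{2\mu}$, this gives exactly the stated $\varrho$. If the constant comes out slightly different, the estimate still holds with the corresponding modified $\varrho$, so this step is a matter of arithmetic and not a structural obstacle.
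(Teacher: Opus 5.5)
Your proof is correct and follows the standard argument of \cite{MT2}, which is also the computation the paper carries out in \eqref{Best1}--\eqref{Best2}: antisymmetry reduces the term to $-b(\w,\w,\v)$, then Cauchy--Schwarz and Young, then H\"older with exponents $\frac{r-1}{2},\frac{r-1}{r-3}$ and a weighted Young inequality; the bookkeeping you left as ``should produce'' does close, since $\delta^{\frac{r-1}{2}}=\frac{\beta\mu(r-1)}{4}$ gives exactly $\varrho$. One cosmetic slip: in the H\"older split, $|\v|^2$ must be grouped with $|\w|^{\frac{4}{r-1}}$ (the factor raised to the power $\frac{r-1}{2}$), not with $|\w|^{\frac{2(r-3)}{r-1}}$, although the inequality you then display is the correct one.
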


  \subsection{Nonlinear operator}
	Let $\mathfrak{C}(\y):=\mathscr{P}(|\y|^{r-1}\y)$ for $\y\in\V\cap\L^{r+1}$. In view of the \cite[Remark 1.6]{Te}, the projection operator $\mathscr{P}:\H^1\to\H^1$ is bounded. Consequently, the operator $\mathfrak{C}(\cdot):\V\cap\widetilde{\L}^{r+1}\to\V'+\widetilde{\L}^{\frac{r+1}{r}}$ is well-defined and we have $\langle\mathfrak{C}_1(\y),\y\rangle =\|\y\|_{\widetilde{\L}^{r+1}}^{r+1}.$ The map $\mathfrak{C}(\cdot):\widetilde{\L}^{r+1}\to\widetilde{\L}^{\frac{r+1}{r}}$ is Gateaux differentiable with Gateaux derivative given by (\cite{MT2})
	\begin{align}\label{2133}
		\mathfrak{C}'(\y)\z&=
		\left\{\begin{array}{cl}
			\mathscr{P}(\z),&\text{ for }r=1,\\ 
			\left\{
			\begin{array}{cc}\mathscr{P}(|\y|^{r-1}\z)+(r-1)
				\mathscr{P}\left(\frac{\y}{|\y|^{3-r}}(\y\cdot\z)\right),&\text{ if }\y\neq \mathbf{0},\\\mathbf{0},&\text{ if }\y=\mathbf{0},
			\end{array}
			\right.
			&\text{ for } 1<r<3,\\ \mathscr{P}(|\y|^{r-1}\z)+(r-1)\mathscr{P}(\y|\y|^{r-3}(\y\cdot\z)), &\text{ for }r\geq 3,
		\end{array}
		\right.
	\end{align}
	for all $\z\in\V\cap\widetilde{\L}^{r+1}$. 
%The following lemma pertains to some monotonicity estimates of the nonlinear operator $\mathfrak{C}(\cdot)$, which we used frequently throughout this work.
	\begin{lemma}[Monotonicity of $\mathfrak{C}(\cdot)$ (\cite{MT2})]
	For every $r\geq1$ and for all $\u,\v,\w\in\widetilde{\L}^{r+1}$, the nonlinear operator $\mathfrak{C}(\cdot)$ satisfies following estimates:
		\begin{align}\label{monoC1}
			\langle\mathfrak{C}(\u)-\mathfrak{C}(\v),\w\rangle\leq
			r\left(\|\u\|_{\widetilde{\L}^{r+1}}+\|\v\|_{\widetilde{\L}^{r+1}}\right)^{r-1}\|\u-\v\|_{\widetilde{\L}^{r+1}}\|\w\|_{\widetilde{\L}^{r+1}}.
		\end{align}
%		and 
%	   \begin{align}\label{monoC2}
%	   	\langle\mathfrak{C}(\u)-\mathfrak{C}(\v),\u-\v\rangle\geq \frac{1}{2}\||\u|^{\frac{r-1}{2}}(\u-\v)\|_{\H}^2+\frac{1}{2}\||\v|^{\frac{r-1}{2}}(\u-\v)\|_{\H}^2\geq\frac{1}{2^{r-1}}\|\u-\v\|_{\wi\L^{r+1}}^{r+1}.
%	   \end{align}
	\end{lemma}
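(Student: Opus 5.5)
The estimate \eqref{monoC1} should follow from the pointwise mean value theorem applied to the map $\mathbf{g}(x)=|x|^{r-1}x$ on $\R^d$, combined with Hölder's inequality. The first step is to remove the Leray projection. Since $\w\in\widetilde{\L}^{r+1}$ is divergence free and $\mathscr{P}$ is self-adjoint, we have $\langle\mathscr{P}\mathbf{f},\w\rangle=\langle\mathbf{f},\w\rangle$. This holds first for $\w\in\mathcal{V}$ and then extends by density, because $\mathbf{f}=|\u|^{r-1}\u-|\v|^{r-1}\v\in\L^{\frac{r+1}{r}}$. Hence
\begin{align*}
\langle\mathfrak{C}(\u)-\mathfrak{C}(\v),\w\rangle=\int_{\mathbb{T}^d}\big(|\u|^{r-1}\u-|\v|^{r-1}\v\big)\cdot\w\,\d\xi .
\end{align*}

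Next I would derive a pointwise bound. For $x,y\in\R^d$, set $\mathbf{g}(x)=|x|^{r-1}x$ and write
\begin{align*}
\mathbf{g}(x)-\mathbf{g}(y)=\int_0^1\mathbf{g}'(y+\theta(x-y))(x-y)\,\d\theta .
\end{align*}
The Jacobian is $\mathbf{g}'(z)h=|z|^{r-1}h+(r-1)|z|^{r-3}z(z\cdot h)$, consistent with \eqref{2133}. Its operator norm is at most $r|z|^{r-1}$. For $r=1$ it is the identity. For $1<r<3$ the formula makes sense for $z\neq0$, and the segment meets $z=0$ in at most one value of $\theta$ unless $x=y$; in any case $\mathbf{g}$ is $\C^1$ away from $0$ and continuous, so the integral representation still holds. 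Using $|y+\theta(x-y)|\le |x|+|y|$, we get
\begin{align*}
|\mathbf{g}(x)-\mathbf{g}(y)|\le r(|x|+|y|)^{r-1}|x-y| .
\end{align*}

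Finally, I would integrate and apply Hölder's inequality with exponents $\frac{r+1}{r-1},\,r+1,\,r+1$. This gives
\begin{align*}
\int(|\u|+|\v|)^{r-1}|\u-\v||\w|\,\d\xi\le \|\,|\u|+|\v|\,\|_{\L^{r+1}}^{r-1}\|\u-\v\|_{\widetilde{\L}^{r+1}}\|\w\|_{\widetilde{\L}^{r+1}},
\end{align*}
and Minkowski's inequality bounds $\|\,|\u|+|\v|\,\|_{\L^{r+1}}\le\|\u\|_{\widetilde{\L}^{r+1}}+\|\v\|_{\widetilde{\L}^{r+1}}$. When $r=1$ the first factor is $1$, and the estimate is simply Cauchy–Schwarz.

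The only delicate point is justifying the mean value representation for $1<r<3$, where $\mathbf{g}$ is not $\C^1$ at the origin. I would handle this by restricting to segments that avoid $0$ and concluding by continuity. Alternatively, I would use the elementary inequality $\big||x|^{r-1}x-|y|^{r-1}y\big|\le r(|x|+|y|)^{r-1}|x-y|$ directly. To prove it, treat the radial part through $\big||x|^{r-1}-|y|^{r-1}\big|\,|x|$ and the angular part through $|y|^{r-1}|x-y|$. This route gives a constant of order $r$, and a slightly more careful version of the same splitting recovers exactly $r$.
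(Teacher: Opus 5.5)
Your proof is correct and follows essentially the standard argument. The paper gives no proof of its own and simply cites \cite{MT2}, where the estimate is obtained from Taylor's (mean value) formula with the derivative $\mathfrak{C}'$ of \eqref{2133}, whose operator norm is at most $r|\cdot|^{r-1}$, followed by H\"older's inequality with exponents $\frac{r+1}{r-1},\,r+1,\,r+1$; you apply the mean value step pointwise in $\R^d$ rather than at the operator level, which changes nothing essential. Your caveat for $1<r<3$ is unnecessary: for every $r>1$ the map $x\mapsto|x|^{r-1}x$ is $\C^1$ on all of $\R^d$, because its Jacobian has norm $r|x|^{r-1}\to0$ at the origin (consistent with the value $\mathbf{0}$ assigned in \eqref{2133}), so the alternative route in your last paragraph is not needed.
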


	\begin{remark}[Essential estimates and calculations]
		As noted in the introduction, the main difficulty lies in estimating the convective term in appropriate norms. Fortunately, the rapidly growing nonlinear term, that is, the term $|\y|^{r-1}\y$ for $r \geq 3$, together with diffusion term $-\Delta\y$, dominate the convective term. Consequently, the later can be handled by means of H\"older’s and Young’s inequalities under suitable norms.
		We have the following equality on torus:
		\begin{align}\label{toruseq}
			(\mathfrak{C}(\y),\mathcal{A}\y)=\beta\||\y|^{\frac{r-1}{2}}\nabla\y\|_{\H}^{2} +4\beta\left[\frac{r-1}{(r+1)^2}\right]\|\nabla|\y|^{\frac{r+1}{2}}\|_{\H}^{2}.
		\end{align}
		Moreover, the following estimate is useful throughout the article :
		\begin{align}\label{syymB3}
			|(\mathfrak{B}(\u),\mathcal{A}\u)|\leq\frac{\mu}{2}\|\mathcal{A}\u\|_{\H}^2
			+\frac{\beta}{4}\||\u|^{\frac{r-1}{2}}\nabla\u\|_{\H}^2 +\varrho\|\nabla\u\|_{\H}^2,
		\end{align}
		where $\varrho$ is defined in \eqref{eqn-varrho}. 
	\end{remark}

	\section{The SCBF system perturbed by jump noise} \label{sec4}\setcounter{equation}{0}
	This section provides a brief overview of the stochastic framework used in our analysis and recall some known well-posedness result for the SCBF system. To this end, we first introduce the abstract functional framework. On projecting the first equation in \eqref{scbfjump}, we obtain
	\begin{equation}\label{scbfabs}
		\left\{
		\begin{aligned}
			\d\Y(t)&+[\mu \mathcal{A}\Y(t)+\mathfrak{B}(\Y(t))+\alpha\Y(t)+ \beta\mathfrak{C}(\Y(t))]\d t\\&=
			\sqrt{\Q}\d\W(t)+\int_{Z}\G(t,z)\wi{\pi}(\d t,\d z), \ t\in(0,T),\\
			\Y(0)&=\y\in\H,
		\end{aligned}
		\right.
	\end{equation}
	where $\Y(\cdot):=\mathscr{P}\u(\cdot)$, $\y:=\mathscr{P}\u_0$ and $\G(\cdot,\cdot)=\mathscr{P}\sigma(\cdot,\cdot)$. 
	\subsection{Stochastic setting}\label{Levy}
	We first review some stochastic preliminaries that will be used consistently throughout the article, drawing from \cite{SPJZ,PEP}.
	Let $\mathfrak{P}:=(\Omega,\mathscr{F},\mathbb{F},\mathbb{P})$, where $\mathbb{F}:=\{\mathscr{F}_t\}_{t\geq 0}$ is an increasing family of sub-sigma fields of $\mathscr{F}$ (containing all past events up to time $t$), be a stochastic basis. We assume that $\mathscr{F}$ equals to the $\sigma$-field generated by $\bigcup\limits_{t\geq0}\mathscr{F}_t$, denoted by $\mathscr{F}_\infty$. In addition, the stochastic basis $\mathfrak{P}$ satisfies the following usual hypothesis: 
	\begin{enumerate}
		\item [(i)] Every set $\F$ which belongs to the $\mathbb{P}$-completion of the $\sigma$-field $\mathscr{F}_\infty$ with $\mathbb{P}(\F)=0$ belongs to $\mathscr{F}_t$ for every $t\in[0,T]$.
		\item [(ii)] The family $\mathbb{F}$ is right continuous, that is,  $\mathscr{F}_t=\mathscr{F}_{t+}:=\bigcap\limits_{s>t}\mathscr{F}_s,$ for $0\leq t\leq T$.
	\end{enumerate} 
Let $\Q$ be a bounded, self-adjoint, and non-negative linear operator on $\H$ such that $\Tr(\Q)<\infty$. Let $\W$ denote an $\H$-valued $\Q$-Wiener process satisfying $\W(0)=0,$ $\mathbb{P}-$a.s.
We shall work under the following standing assumption throughout the article:
\begin{hypothesis}\label{trQ1}
	$\mathcal{A}^{\frac12}\Q^{\frac12}$ is a Hilbert-Schimdt operator.
\end{hypothesis}

Let us write $\Q_1:=\mathcal{A}^{\frac12}\Q\mathcal{A}^{\frac12}$. Since both $\mathcal{A}^{\frac12}$ and $\Q^{\frac12}$ are self-adjoint, Hypothesis \ref{trQ1} directly implies that 
\begin{align*}
	\Tr(\Q_1)=\Tr(\mathcal{A}^{\frac12}\Q\mathcal{A}^{\frac12})=
	\Tr(\mathcal{A}^{\frac12}\Q^{\frac12}\Q^{\frac12}\mathcal{A}^{\frac12})
	=\Tr\big(\mathcal{A}^{\frac12}\Q^{\frac12}\big(\mathcal{A}^{\frac12}\Q^{\frac12}\big)^{*}\big)
	=\|\mathcal{A}^{\frac12}\Q^{\frac12}\|_{\mathscr{L}_2(\H)}^2<\infty,
\end{align*}
where $\mathscr{L}_2(\H)$ is the Hilbert space of all Hilbert-Schmidt operators on $\H$.

	Let us suppose that $(Z,\mathscr{B}(Z))$ is a measurable space and let $\lambda$ be a positive $\sigma$-finite measure on it. Let $\pi:\Omega\times\mathscr{B}(\R^{+})\times\mathscr{B}(Z)\to\N\cup\{0\}$ be a time homogeneous Poisson random measure over $\mathfrak{P}$ with the intensity measure $\lambda$. The intensity measure $\lambda$ is given by  
\begin{align*}
	\E\left[\pi\left((0,1]\times\B\right)\right]=\lambda(\B), \text{ for all } \ \B\in\mathscr{B}(Z),
\end{align*}
and it satisfies the following conditions:
\begin{align*}
	\lambda(\{0\})=0 \ \text{ and } \ \ \int_{Z} (1\wedge|z|^2)\lambda(\d z)<+\infty.
\end{align*}
We denote by $\wi\pi:=\pi-\upgamma$, the compensated time homogeneous Poisson random measure  associated with $\lambda$, where the compensator measure $\upgamma$ is given by 
\begin{align*}
	\upgamma:\mathscr{B}(\R_+)\times\mathscr{B}(Z)\ni(I,\B)\mapsto\d(I)
	\lambda(\B)\in\R_+,
\end{align*} 
where $\d(\cdot)$ denotes the Lebesgue measure.

	Let $\D([0,T];\H)$ be the space of all $\H-$valued c\'adl\'ag functions defined $[0,T]$ equipped with the Skorohod $\J-$topology (see \cite{Ajakub,AJMM} for definition and its properties). Also let
	\begin{align*}
	\mathrm{L}^2(0,T;\mathrm{L}^2_{\lambda}(Z;\H))
		:=\mathrm{L}^2\big((0,T]\times Z;\mathscr{B}((0,T])\times\mathscr{B}(Z);\d t\otimes \lambda;\H\big),
	\end{align*}
	the space of all (equivalence class of) $\mathscr{B}((0,T])\times\mathscr{B}(Z)$-measurable functions $\G:[0,T]\times Z\to\H$ such that 
	\begin{align*}
		\int_0^T\int_{Z}\|\G(t,z)\|_{\H}^2\lambda(\d z)\d t
		<+\infty.
	\end{align*}
	Note that, if $\G\in\mathrm{L}^2(0,T;\mathrm{L}^2_{\lambda}(Z;\H))$, then the process 
	\begin{align*}
		\M(t):=\int_0^t\int_{Z}\G(s,z)\wi\pi(\d s,\d z),
	\end{align*} 
	is a c\'adl\'ag square integrable martingale (see \cite{SPJZ}) and the following It\^o isometry hold:
	\begin{align*}
		\E\bigg[\bigg\|\int_0^T\int_{Z}\G(s,z)\wi\pi(\d t,\d z) \bigg\|_{\H}^2\bigg]=
		\int_0^T\int_{Z}\|\G(t,z)\|_{\H}^2\lambda(\d z)\d t.
	\end{align*}
	Moreover, there exist increasing c\'adl\'ag processes, denoted by $[\M]_t$ and $\langle\M\rangle_t$, referred to as the \emph{quadratic variation process} and the \emph{Meyer process}, respectively, such that the difference $[\M]_t-\langle\M\rangle_t$ is a local martingale
	(see \cite[Section 1.6]{HKu} and \cite[Section 2, Chapter I]{Me}). Furthermore, for the process $\M(\cdot)$ defined above, one can verify the following (see \cite[Example 2.8]{UMS}):
	\begin{align*}
		[\M]_t= \int_0^t\int_{Z}\|\G(s,z)\|_{\H}^2\pi(\d s,\d z)\ \text{ and } \ \langle\M\rangle_t=\int_0^t\int_{Z}\|\G(s,z)\|_{\H}^2\lambda(\d z)
		\d s,
	\end{align*}
	for all $t\in[0,T]$. Indeed, we have $\E\big[\|\M(t)\|_{\H}^2\big]=\E\big[[\M]_t\big]=
		\E\big[\langle\M\rangle_t\big]$, so that 
	\begin{align*}
		\E\bigg[\int_0^t\int_{Z}\|\G(s,z)\|_{\H}^2\pi(\d s,\d z)\bigg]=
		\int_0^t\int_{Z}\|\G(s,z)\|_{\H}^2\lambda(\d z)\d s,
	\end{align*}
	for all $t\in[0,T]$. 
	
	%    Let $Y$ be a separable Banach space. Let  $\mathrm{F}_t:=\mathscr{F}_t\times\mathscr{B}(\R^+)\times\mathscr{B}(Z)$ be the product $\sigma$-field and $T>0$. We define the following class of measurable functions:
	%   \begin{align*}
		%  	\mathscr{H}(Z):=\bigg\{g:\R^+\times &Z\times\Omega\to Y\bigg| g \ \text{ is } \ \mathrm{F}_T/\mathscr{B}(Y)\text{-measurable and } \\ &g(t,z,\omega) \ \text{ is } \ \mathscr{F}_t-\text{adapted for every } \ t\in(0,T] \ \text{ and every} \ z\in Z \bigg\}.
		%   \end{align*}
	%  Let us define the following equivalence class of measurable functions (see \cite{}):
	%  \begin{align*}
		%  	\mathrm{L}^q_{\lambda,T}(Z;Y)&:=\left\{\xi:Z\to Y\bigg| 	|\xi|_{\mathrm{L}^q_{\lambda,T}(Z;Y)}:=\left(\int_{Z}\|\xi(z)\|_{Y}^q \lambda(\d z)\right)^{\frac{1}{q}}<\infty\right\},\\
		%  	\mathrm{L}^q(\Omega;Y)&:=\left\{\xi:\Omega\to Y\bigg| 	|\xi|_{\mathrm{L}^q(\Omega;Y)}:=\left(\E(\|\xi\|_{Y}^q)\right)^{\frac{1}{q}}<\infty\right\},\\
		%  \mathrm{L}^q(\Omega\times[0,T];Y)&:=\bigg\{\xi:\Omega\times[0,T]\to Y\bigg| 	|\xi|_{\mathrm{L}^q(\Omega\times[0,T];Y)}:=\left(\E\int_0^T\|\xi(t)\|_{Y}^q \d t\right)^{\frac{1}{q}}<\infty\bigg\},
		%  \end{align*}
	
	\subsection{Well-posedness result} 
	We now recall some fundamental existence and uniqueness results for SCBF system perturbed by L\'evy noise. Let us first provide the definition of a unique global strong solution to the system \eqref{scbfabs}.
	\begin{definition}\label{defjumpe}
	 Let $\x\in\H$ be given. An $\H-$valued $\mathscr{F}_t-$adapted stochastic process $\X(\cdot)$ is called a \emph{strong solution} (or \emph{probabilistically strong solution}) to the system \eqref{scbfabs} if the following conditions are satisfied:
	 \begin{itemize}
	 	\item The process 
	 	\begin{align*}
	 		\Y(\cdot)\in\mathrm{L}^2\left(\Omega;\mathrm{L}^{\infty}(0,T;\H)\cap
	 		\mathrm{L}^2(0,T;\V)\right)\cap\mathrm{L}^{r+1}
	 		\big(\Omega;\mathrm{L}^{r+1}(0,T;\wi\L^{r+1})\big),
	 	\end{align*}
	 	 and $\Y(\cdot)$ has a $\V\cap\wi\L^{r+1}-$valued modification, which is progressively measurable with c\'adl\'ag paths in $\H$ and 
	 	 \begin{align*}
	 	 	\Y(\cdot)\in\D([0,T];\H)\cap\mathrm{L}^2(0,T;\V)\cap\mathrm{L}^{r+1}
	 	 	(0,T;\wi\L^{r+1}), \ \mathbb{P}-\text{a.s.}
	 	 \end{align*}
	 	 \item The following equality holds for every $t\in[0,T]$, as an element of $\V^{\prime}+\wi\L^{\frac{r+1}{r}}$, $\mathbb{P}-$a.s.
	 	 \begin{align*}
	 	 	\Y(t)=\y&-\int_0^t[\mu\mathcal{A}\Y(s)+\mathfrak{B}(\Y(s))+\alpha\Y(s)+
	 	 	\beta\mathfrak{C}(\Y(s))]\d s+
	 	 	\int_0^t\sqrt{\Q}\d\W(s)
	 	 	\nonumber\\&\quad+\int_0^t\int_{Z}\G(s,z)\wi\pi(\d s,\d z).
	 	 \end{align*}
	 	 Alternatively, for any $\v\in\V\cap\wi\L^{r+1}$, $\mathbb{P}-$a.s., we have following:
	 	 \begin{align*}
	 	 (\Y(t),\v)&=(\y,\v)-\int_0^t\langle\mu\mathcal{A}\Y(s)+ \alpha\Y(s)+\mathfrak{B}(\Y(s))+\beta\mathfrak{C}(\Y(s)),\v\rangle\d s \nonumber\\&\quad+
	 	 \int_0^t\left(\sqrt{\Q}\d \W(s),\v\right)+\int_0^t\int_{Z}(\G(s,z),\v)
	 	 \wi\pi(\d s,\d z).
	 	 \end{align*}
	 	 \item The following It\^o formula holds, for all $t\in[0,T]$, $\mathbb{P}-$a.s.:
	 	 \begin{align}\label{engeqjump}
	 	 	&\|\Y(t)\|_{\H}^2+2\mu\int_0^t \|\nabla\Y(s)\|_{\H}^2\d s+
	 	 	2\alpha\int_0^t\|\Y(s)\|_{\H}^2\d s+2\beta\int_0^t \|\Y(s)\|_{\wi\L^{r+1}}\d s
	 	 	\nonumber\\&
	 	 	=\|\y\|_{\H}^2+t\Tr(\Q)+\int_0^t\|\G(s,z)\|_{\H}^2\pi(\d s,\d z)
	 	 	\nonumber\\&\quad+
	 	 	2\int_0^t(\sqrt{\Q}\d\W(s),\Y(s))+
	 	 	  2\int_0^t\int_{Z}\G(s,z)\wi\pi(\d s,\d z).
	 	 \end{align}
	\end{itemize}
	\end{definition}
	
	\begin{definition}\label{defjumpu}
	A strong solution $\Y(\cdot)$ to the system $\eqref{scbfabs}$ is called a \emph{pathwise unique strong solution}  if
	$\widetilde{\Y}(\cdot)$ is an another strong
	solution, then $$\mathbb{P}\big\{\omega\in\Omega:\Y(t)=\widetilde{\Y}(t),\ \text{ for all }\ t\in[0,T]\big\}=1.$$ 
	\end{definition}
	
	\begin{theorem}\label{extunjump}\cite[Theorem 3.6]{MT2}
		Let  $\x\in\H$ be given. Assume that 
		\begin{align*}
		\mathpzc{K}:=\int_0^T\int_{Z}\|\G(t,z)\|_{\H}^2\lambda(\d z)\d t<+\infty.
		\end{align*}
		For $r>3$, there exists a \emph{pathwise unique strong solution} to the system \eqref{scbfabs} in the sense of Definition \ref{defjumpe}-\ref{defjumpu}. Moreover, the following energy estimate holds:
		\begin{align*}
		&\E\left[\sup\limits_{t\in[0,T]}\|\Y(t)\|_{\H}^2+\int_0^T
		\|\nabla\Y(t)\|_{\H}^2\d t+\int_0^T\|\Y(t)\|_{\H}^{2}\d t+
		\int_0^T\|\Y(t)\|_{\wi\L^{r+1}}^{r+1}\d t \right]
		\nonumber\\&\leq
		 C(\|\y\|_{\H},\Tr(\Q),\mu,\alpha,\beta,T,\mathpzc{K}).
		\end{align*}
	\end{theorem}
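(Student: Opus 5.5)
We plan to prove Theorem \ref{extunjump} by a Faedo--Galerkin approximation combined with a local monotonicity (stochastic Minty--Browder) argument. The first step projects \eqref{scbfabs} onto the span $\H_n$ of the first $n$ eigenfunctions of $\mathcal{A}$ via the orthogonal projection $\P_n$. This gives a finite-dimensional SDE driven by $\P_n\sqrt{\Q}\W$ and $\int_Z\P_n\G\,\wi\pi$. Its drift is locally Lipschitz, so a unique c\'adl\'ag local solution exists. Global existence then follows from the a priori estimate below, via a stopping-time argument at $\tau_N=\inf\{t:\|\Y_n(t)\|_{\H}>N\}$.

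The second step is the uniform energy estimate. We apply the It\^o formula (jump version) to $\|\Y_n\|_{\H}^2$. The term $\langle\mathfrak{B}(\Y_n),\Y_n\rangle$ vanishes by \eqref{syymB}, and $\langle\mathfrak{C}(\Y_n),\Y_n\rangle=\|\Y_n\|_{\wi\L^{r+1}}^{r+1}$; on the torus, $\P_n$ commutes with $\mathcal{A}$, so the Galerkin projection does not spoil this. This yields an identity of the form \eqref{engeqjump}. Taking the supremum in time and expectations, we estimate the two martingale terms by the Burkholder--Davis--Gundy inequality. For the Wiener part we use $\E\big(\int_0^T\|\sqrt{\Q}\Y_n\|_{\H}^2\d s\big)^{1/2}\le \frac14\E\sup_t\|\Y_n\|_{\H}^2+C\,T\Tr(\Q)$. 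For the jump part we use $\E\big(\int_0^T\!\int_Z\|\G\|_{\H}^2\|\Y_n\|_{\H}^2\pi(\d s,\d z)\big)^{1/2}\le\frac14\E\sup_t\|\Y_n\|_{\H}^2+C\mathpzc{K}$. The compensator identity $\E\int\|\G\|^2\d\pi=\mathpzc{K}$ handles the remaining jump term. Absorbing the supremum terms gives the stated bound uniformly in $n$.

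The third step is passage to the limit. From the uniform bounds we extract weak-star limits: $\Y_n\rightharpoonup\Y$ in $\mathrm{L}^2(\Omega;\mathrm{L}^2(0,T;\V))\cap\mathrm{L}^{r+1}(\Omega;\mathrm{L}^{r+1}(0,T;\wi\L^{r+1}))$, with $\mu\mathcal{A}\Y_n+\alpha\Y_n+\mathfrak{B}(\Y_n)+\beta\mathfrak{C}(\Y_n)\rightharpoonup\Phi_0$ in the dual space $\mathrm{L}^{\frac{r+1}{r}}(\Omega;\mathrm{L}^{\frac{r+1}{r}}(0,T;\V'+\wi\L^{\frac{r+1}{r}}))$. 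The nonlinear terms are bounded in this dual space: $\mathfrak{B}$ by interpolating $\|\Y\|_{\wi\L^{r+1}}$ and $\|\Y\|_{\H}$ (possible since $r>3$), and $\mathfrak{C}$ by \eqref{monoC1}.

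The key and hardest step is identifying $\Phi_0$ with the true nonlinear drift. Set $F(\Y):=\mu\mathcal{A}\Y+\alpha\Y+\mathfrak{B}(\Y)+\beta\mathfrak{C}(\Y)$. By \eqref{3.4} and the monotonicity of $\mathfrak{C}$, we have $\langle F(\u)-F(\v),\u-\v\rangle+\varrho\|\u-\v\|_{\H}^2\ge0$ for $r>3$. To exploit this stochastically, we apply It\^o's formula to $e^{-2\varrho t}\|\Y_n(t)\|_{\H}^2$ and compare it with the It\^o formula for the limit process. This comparison needs the limit equation, which forces c\'adl\'ag paths in $\H$, together with an energy equality. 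The energy equality follows from a Gy\"ongy--Krylov type It\^o formula for semimartingales in the Gelfand triple $\V\cap\wi\L^{r+1}\subset\H\subset\V'+\wi\L^{\frac{r+1}{r}}$. We combine it with weak lower semicontinuity and test against an arbitrary $\v$. Hemicontinuity, via $\v=\Y-\lambda\w$ with $\lambda\to0$, then gives $\Phi_0=F(\Y)$. We expect this to be the main obstacle, because the jumps require care with left limits in the It\^o formula.

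Pathwise uniqueness then follows from the same monotonicity. Applying It\^o's formula to $e^{-2\varrho t}\|\Y_1-\Y_2\|_{\H}^2$, the noise cancels because it is additive, and Gronwall's inequality closes the argument.
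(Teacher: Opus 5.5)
Your proposal is correct and follows essentially the same route as the source the paper relies on. The paper gives no proof of its own and imports the result from \cite{MT2}, where it is obtained exactly as you outline: Faedo--Galerkin approximation, uniform energy bounds, identification of the nonlinear limit via the global monotonicity of $\mu\mathcal{A}+\alpha\I+\mathfrak{B}+\beta\mathfrak{C}+\varrho\I$ (from \eqref{3.4}) with a stochastic Minty--Browder argument and a Gy\"ongy--Krylov type It\^o formula, and pathwise uniqueness from the same monotonicity, the additive noise cancelling in the difference.
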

		 
		 \subsection{Estimates for bounds of derivatives}
		We now estimate the bounds of derivatives for the solution $\Y(\cdot)$ of the SCBF system \eqref{scbfabs}, which is useful while proving the essential $m-$dissipativity of the Kolmogorv operator. Particularly, we find an estimate for $\E\left[\|\boldsymbol{\xi}^{\boldsymbol{h}}(t,\y)\|_{\H}^2\right]$, where $\boldsymbol{\xi}^{\boldsymbol{h}}(t,\y)=\mathcal{D}_{\y}\Y(t,\y)\boldsymbol{h},$  for all $ \y, \boldsymbol{h}\in\H$.
		 \begin{lemma}\label{lem4.4}
		 	Let us assume that 
		 	\begin{align}\label{419}
		 		\alpha>\varrho_1:=\frac{r-3}{2\mu(r-1)}\left[\frac{2}{\beta\mu (r-1)}\right]^{\frac{2}{r-3}}.
		 	\end{align}
		 	 Then, we have 
		 	\begin{align}\label{4p20}
		 	\|\boldsymbol{\xi}^{\boldsymbol{h}}(t,\y)\|_{\H}\leq \|\boldsymbol{h}\|_{\H}e^{-\mathpzc{k}_1t}, \ \text{ for all } \ t\in[0,T],
		 	\end{align}
		 where 
		 	\begin{equation}\label{kappalpha1}
		 	\mathpzc{k}_1=
		 	\left\{
		 	\begin{aligned}
		 		\alpha-\varrho_1, \ &\text{ when } r>3,\\
		 		\alpha, \ &\text{ when } r=3 \ \text{ with } 2\beta\mu\geq1.
		 	\end{aligned}
		 	\right.
		 \end{equation}
		 \end{lemma}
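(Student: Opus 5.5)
Since the noise in \eqref{scbfabs} is additive, the derivative $\boldsymbol{\xi}^{\h}(t)=\mathcal{D}_{\y}\Y(t,\y)\h$ solves, pathwise, the deterministic linearized equation
\begin{align*}
\frac{\d}{\d t}\boldsymbol{\xi}^{\h}+\mu\mathcal{A}\boldsymbol{\xi}^{\h}+\alpha\boldsymbol{\xi}^{\h}+\mathfrak{B}(\Y,\boldsymbol{\xi}^{\h})+\mathfrak{B}(\boldsymbol{\xi}^{\h},\Y)+\beta\mathfrak{C}'(\Y)\boldsymbol{\xi}^{\h}=\mathbf{0},\qquad \boldsymbol{\xi}^{\h}(0)=\h.
\end{align*}
The jump and Gaussian terms drop out, so no It\^o correction appears. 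The plan is a pathwise energy estimate. First I justify this equation, either by formally differentiating the mild/variational formulation, or more robustly by working with the difference quotient $(\Y(t,\y+\varepsilon\h)-\Y(t,\y))/\varepsilon$. That difference also solves a random deterministic equation, and the same estimate below applies to it uniformly in $\varepsilon$. Then I test with $\boldsymbol{\xi}^{\h}$ in $\H$.

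Testing gives
\begin{align*}
\frac12\frac{\d}{\d t}\|\boldsymbol{\xi}^{\h}\|_{\H}^2+\mu\|\nabla\boldsymbol{\xi}^{\h}\|_{\H}^2+\alpha\|\boldsymbol{\xi}^{\h}\|_{\H}^2+\beta(\mathfrak{C}'(\Y)\boldsymbol{\xi}^{\h},\boldsymbol{\xi}^{\h})=-b(\boldsymbol{\xi}^{\h},\Y,\boldsymbol{\xi}^{\h}),
\end{align*}
using \eqref{syymB} to kill $b(\Y,\boldsymbol{\xi}^{\h},\boldsymbol{\xi}^{\h})$.

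From \eqref{2133}, for $r\ge3$ we get $(\mathfrak{C}'(\Y)\boldsymbol{\xi},\boldsymbol{\xi})\ge\||\Y|^{\frac{r-1}{2}}\boldsymbol{\xi}\|_{\H}^2$. Here the Leray projection disappears because $\boldsymbol{\xi}\in\H$, and the second term in \eqref{2133} is nonnegative.

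For the convective term, I write $b(\boldsymbol{\xi},\Y,\boldsymbol{\xi})=-b(\boldsymbol{\xi},\boldsymbol{\xi},\Y)$ and bound it by $\|\nabla\boldsymbol{\xi}\|_{\H}\||\Y|\boldsymbol{\xi}\|_{\H}$. Young's inequality then gives $\frac{\mu}{2}\|\nabla\boldsymbol{\xi}\|^2+\frac{1}{2\mu}\||\Y|\boldsymbol{\xi}\|^2$.

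\emph{Case $r>3$.} Interpolate pointwise: $|\Y|^2|\boldsymbol{\xi}|^2=(|\Y|^{r-1}|\boldsymbol{\xi}|^2)^{\frac{2}{r-1}}(|\boldsymbol{\xi}|^2)^{\frac{r-3}{r-1}}$. Applying H\"older and then Young with exponents $\frac{r-1}{2},\frac{r-1}{r-3}$ yields
\[
\frac{1}{2\mu}\||\Y|\boldsymbol{\xi}\|^2\le \frac{\beta}{2}\||\Y|^{\frac{r-1}{2}}\boldsymbol{\xi}\|^2+\varrho_1\|\boldsymbol{\xi}\|^2.
\]
The exact constant $\varrho_1$ in \eqref{419} comes out of optimizing the Young constant; the computation parallels the one giving $\varrho$ in \eqref{3.4}, but with $\beta/2$ in place of $\beta/4$, so $4$ becomes $2$. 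I expect the main obstacle to be this constant bookkeeping, which must reproduce $\varrho_1$ exactly, together with retaining enough of the $\beta$ term. One must check that $\frac{\beta}{2}\le\beta$, so the absorption absorbs it, and that $\frac{\mu}{2}\le\mu$ as well.

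\emph{Case $r=3$.} Young's inequality gives directly $\frac{1}{2\mu}\||\Y|\boldsymbol{\xi}\|^2\le\beta\||\Y|\boldsymbol{\xi}\|^2$ when $2\beta\mu\ge1$, so no $\varrho_1$ term appears.

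Combining the two cases, we obtain $\frac{\d}{\d t}\|\boldsymbol{\xi}^{\h}\|_{\H}^2+2\mathpzc{k}_1\|\boldsymbol{\xi}^{\h}\|_{\H}^2\le0$ (dropping the nonnegative remainders). Gronwall's inequality then gives $\|\boldsymbol{\xi}^{\h}(t)\|_{\H}^2\le e^{-2\mathpzc{k}_1t}\|\h\|_{\H}^2$, which is \eqref{4p20}, pathwise for all $t\in[0,T]$; $\mathpzc{k}_1>0$ by \eqref{419}. If the difference-quotient route is used, the bound passes to the limit $\varepsilon\to0$, and this also yields the Lipschitz estimate needed to identify the derivative.
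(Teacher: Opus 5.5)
Your argument is the paper's proof: the pathwise linearized equation, testing with $\boldsymbol{\xi}^{\h}$, killing $b(\Y,\boldsymbol{\xi}^{\h},\boldsymbol{\xi}^{\h})$, Young on $|b(\boldsymbol{\xi}^{\h},\boldsymbol{\xi}^{\h},\Y)|$, H\"older--Young interpolation against the absorption term for $r>3$, direct absorption under $2\beta\mu\ge1$ for $r=3$, and then Gr\"onwall. Your constant $\frac{1}{2\mu}\||\Y|\boldsymbol{\xi}^{\h}\|_{\H}^2\le\frac{\beta}{2}\||\Y|^{\frac{r-1}{2}}\boldsymbol{\xi}^{\h}\|_{\H}^2+\varrho_1\|\boldsymbol{\xi}^{\h}\|_{\H}^2$ is exactly right, and it corrects a slip in the paper: the last term of \eqref{Best2} should read $2\mu\varrho_1$, which becomes $\varrho_1$ after the factor $\frac{1}{2\mu}$ and yields \eqref{4p22bd}.
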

		 \begin{proof}
		 	We write $\boldsymbol{\xi}^{\boldsymbol{h}}(t)$ instead of $\boldsymbol{\xi}^{\boldsymbol{h}}(t,\y),$ for simplicity. Then $\boldsymbol{\xi}^{\boldsymbol{h}}(\cdot)$ satisfies 	for a.e. $t\in[0,T]$,  $\mathbb{P}$-a.s.: 
		 	\begin{equation}\label{4.21}
		 		\left\{
		 		\begin{aligned}
		 			\frac{\d\boldsymbol{\xi}^{\boldsymbol{h}}(t)}{\d t}+\mu\mathcal{A}\boldsymbol{\xi}^{\boldsymbol{h}}(t)+\alpha\boldsymbol{\xi}^{\boldsymbol{h}}(t)+\mathfrak{B}'(\X(t))\boldsymbol{\xi}^{\boldsymbol{h}}(t)+\beta\mathfrak{C}'(\X(t))\boldsymbol{\xi}^{\boldsymbol{h}}(t)&=\boldsymbol{0},\\
		 			\boldsymbol{\xi}^{\boldsymbol{h}}(0)&=\h, 
		 		\end{aligned}\right. 
		 	\end{equation} 
		 	where $\mathfrak{B}'(\Y)\boldsymbol{\xi}^{\boldsymbol{h}}=\mathfrak{B}(\Y,\boldsymbol{\xi}^{\boldsymbol{h}})+\mathfrak{B}(\boldsymbol{\xi}^{\boldsymbol{h}},\Y)$ and $\mathfrak{C}'(\Y)$ is defined in \eqref{2133}. Taking the inner product with $\boldsymbol{\xi}^{\boldsymbol{h}}(\cdot)$ to the first equation in \eqref{4.21}, we find 
		 	\begin{align}\label{4p22}
		 		&	\frac{1}{2}\frac{\d}{\d t} \|\boldsymbol{\xi}^{\boldsymbol{h}}(t)\|_{\H}^2 +\mu\|\nabla\boldsymbol{\xi}^{\boldsymbol{h}}(t)\|_{\H}^2
		 		+\alpha\|\boldsymbol{\xi}^{\boldsymbol{h}}(t)\|_{\H}^2+
		 		\beta\||\Y(t)|^{\frac{r-1}{2}}\boldsymbol{\xi}^{\boldsymbol{h}}(t)\|_{\H}^2
		 		\nonumber\\&\quad+\beta(r-1) \||\Y(t)|^{\frac{r-3}{2}}(\Y(t)\cdot\boldsymbol{\xi}^{\boldsymbol{h}}(t))\|_{\H}^2\nonumber\\&=-\big(\mathfrak{B}(\boldsymbol{\xi}^{\boldsymbol{h}}(t),\Y(t)),\boldsymbol{\xi}^{\boldsymbol{h}}(t)\big).
		 	\end{align}
		     By using the properties \eqref{syymB} of bilinear operator and applying the Cauchy Schwarz inequality, we find
		     \begin{align}\label{Best1}
		     \big|\big(\mathfrak{B}(\boldsymbol{\xi}^{\boldsymbol{h}}(t),\Y(t)),\boldsymbol{\xi}^{\boldsymbol{h}}(t)\big)\big|= \big|\big(\mathfrak{B}(\boldsymbol{\xi}^{\boldsymbol{h}}(t),
		     \boldsymbol{\xi}^{\boldsymbol{h}}(t)),\Y(t)\big)\big|
		     \leq
		     \frac{\mu}{2}\|\nabla\boldsymbol{\xi}^{\boldsymbol{h}}(t)\|_{\H}^2+
		     \frac{1}{2\mu}\|\Y(t)\boldsymbol{\xi}^{\boldsymbol{h}}(t)\|_{\H}^2.
		     \end{align}
		     For $r>3$, by the application of H\"older's and Young's inequalities, we estimate the last term in \eqref{Best1} as follows
		     \begin{align}\label{Best2}
		     	\|\Y(t)\boldsymbol{\xi}^{\boldsymbol{h}}(t)\|_{\H}^2&=
		     	\int_{\mathbb{T}^d}|\Y(t,x)|^2|\boldsymbol{\xi}^{\boldsymbol{h}}(t,x)|^2
		     	\d x
		     	\nonumber\\&=
		     	\int_{\mathbb{T}^d} |\Y(t,x)|^2
		     	|\boldsymbol{\xi}^{\boldsymbol{h}}(t,x)|^{\frac{4}{r-1}}
		     	|\boldsymbol{\xi}^{\boldsymbol{h}}(t,x)|^{\frac{2(r-3)}{r-1}}\d x
		     	\nonumber\\&\leq
		     	\left(\int_{\mathbb{T}^d} |\Y(t,x)|^{r-1}
		     	|\boldsymbol{\xi}^{\boldsymbol{h}}(t,x)|^{2}\d x\right)^{\frac{2}{r-1}}
		     	\left(\int_{\mathbb{T}^d}
		     	|\boldsymbol{\xi}^{\boldsymbol{h}}(t,x)|^{2}\d x\right)^{\frac{r-3}{r-1}}
		     	\nonumber\\&\leq
		     	\beta\mu\||\Y(t)|^{\frac{r-1}{2}}\boldsymbol{\xi}^{\boldsymbol{h}}(t)\|_{\H}^2
		     	+\varrho_1\|\boldsymbol{\xi}^{\boldsymbol{h}}(t)\|_{\H}^2.
		     \end{align}
		     On substituting \eqref{Best1}-\eqref{Best2} into \eqref{4p22} and simplifying, we obtain
		     \begin{align}\label{4p22bd}
		     	\frac{1}{2}\frac{\d}{\d t}\|\boldsymbol{\xi}^{\boldsymbol{h}}(t)\|_{\H}^2 
		     	+\alpha\|\boldsymbol{\xi}^{\boldsymbol{h}}(t)\|_{\H}^2
		     	\leq
		     	\varrho_1\|\boldsymbol{\xi}^{\boldsymbol{h}}(t)\|_{\H}^2.
		     \end{align}
		    An application of Gr\"onwall's inequality in \eqref{4p22bd} gives
		 	\begin{align}\label{424}
		 		\|\boldsymbol{\xi}^{\boldsymbol{h}}(t)\|_{\H}^2\leq \|\boldsymbol{h}\|_{\H}^2e^{-2\left(\alpha-\varrho_1\right)t},
		 	\end{align}
		 	for all $t\in[0,T]$. 
		 	
		 	For $r=3$, from \eqref{4p22} and \eqref{Best1}, we immediately have
		 	\begin{align*}
		 		&\frac{1}{2}\frac{\d}{\d t} \|\boldsymbol{\xi}^{\boldsymbol{h}}(t)\|_{\H}^2 +\frac{\mu}{2}\|\nabla\boldsymbol{\xi}^{\boldsymbol{h}}(t)\|_{\H}^2
		 	+\alpha\|\boldsymbol{\xi}^{\boldsymbol{h}}(t)\|_{\H}^2+
		 	\left(\beta-\frac{1}{2\mu}\right)\||\Y(t)|^{\frac{r-1}{2}}\boldsymbol{\xi}^{\boldsymbol{h}}(t)\|_{\H}^2
		 	\nonumber\\&\quad+\beta(r-1) \||\Y(t)|^{\frac{r-3}{2}}(\Y(t)\cdot\boldsymbol{\xi}^{\boldsymbol{h}}(t))\|_{\H}^2
		 	\leq0.
		 	\end{align*}
		 	Therefore, for $2\beta\mu\geq1$, we conclude
		 	\begin{align*}
		 	\|\boldsymbol{\xi}^{\boldsymbol{h}}(t,\y)\|_{\H}\leq \|\boldsymbol{h}\|_{\H}e^{-\alpha t},
		 	\end{align*}
		 	which completes the proof. 
		 \end{proof}
		 
		 \section{Existence and uniqueness of invariant measure}\label{secvarmea}\setcounter{equation}{0}
		In this section, we recall some known results concerning the existence and uniqueness of invariant measure for the transition semigroup associated with the SCBF system described in \eqref{scbfabs} (see \cite[Section 5]{MT2}). We begin by defining the corresponding transition semigroup. 
		
		 For any $\uppsi\in\mathscr{B}_b(\H)$ and $t\geq0$, we introduce the family of operators $\{\mathtt{P}_t\}_{t\geq0}$ by
		 \begin{align}\label{tight1}
		 	(\mathtt{P}_t\uppsi)(\y)=\E\left[\uppsi(\Y(t,\y))\right], \ \y\in\H,
		 \end{align} 
		 where $\Y=\Y(t,\y)$ denotes the unique strong solution to the SCBF system \eqref{scbfabs} with initial condition $\y$.
%		 {\color{Maroon} Give the proper reasoning that how $\mathtt{P}_t$ is well-defined and why it is a Markov process !!}
		 
		 Moreover, for every $\uppsi\in\C_b(\H)$, the mapping $\H\ni\y\mapsto\E\left[\uppsi(\Y(t,\y))\right]\in\R$ is continuous $\H$. Consequently, the family $\{\mathtt{P}_t\}_{t\geq0}$ defines a Feller semigroup. However, in general, this semigroup is not a strongly continuous on $\C_b(\H)$. Under suitable assumptions, one can show that $\{\mathtt{P}_t\}_{t\geq0}$ is instead \emph{weakly continuous} on $\C_b(\H)$ (see \cite[Appendix B, pp. 305]{SC1} for the definition and properties).
		 Following \cite{SC1}, the infinitesimal generator of this weakly continuous semigroup is defined as the unique closed linear operator $\mathcal{N}:\D(\mathcal{N})\subset\C_b(\H)\to\C_b(\H)$ characterized by the resolvent relation 
		 \begin{align*}
		 	\mathcal{R}(\lambda,\mathcal{N})\uppsi(\y)=\int_0^{\infty} e^{-\lambda t} \mathtt{P}_t \uppsi(\y)\d t, 
		 \end{align*}
	 for any $\lambda>0,$  $\uppsi\in\C_b(\H).$ and $\y\in\H$. 
		 % We will study the Kolmogorov equations in the space $\mathrm{L}^2(\H;\eta)$, where $\eta$ is an invariant measure for $\{\mathtt{P}_t\}_{t\geq0}$, that is $\eta$ is a Borel probability measure in $\H$ such that 
		 % \begin{align*}
		 	% 	\int_{\H}\mathtt{P}_t\uppsi \d\eta=\int_{\H}\uppsi\d\eta, \ \text{ for all }\ \uppsi\in\C_b(\H). 
		 	% \end{align*}
		 
		 \subsection{Existence of an invariant measure}\label{sec4.2}
		 It is well known that the system \eqref{scbfabs} has a unique strong solution $\Y(\cdot)$ with paths in $\D([0,T];\H)\cap\mathrm{L}^2(0,T;\V)\cap\mathrm{L}^{r+1}(0,T;\wi\L^{r+1})$, $\mathbb{P}$-a.s. Taking the expectation in the energy equality (or It\^o formula) \eqref{engeqjump}, and noting that the last two terms in \eqref{engeqjump} are martingales with zero expectation, we obtain 
		 \begin{align}\label{4.26}
		 	&	\E\left[\|\Y(t)\|_{\H}^2+2\mu\int_0^t\|\nabla\Y(s)\|_{\H}^2\d s+ 2\alpha\int_0^t\|\Y(s)\|_{\H}^2\d s+ 2\beta\int_0^t\|\Y(s)\|_{\wi\L^{r+1}}^{r+1}\d s\right]\nonumber\\& =\|\x\|_{\H}^2+\Tr(\Q)t+\int_0^t\int_{Z}\|\G(s,z)\|_{\H}^2\lambda(\d z)\d s,
		 \end{align}
		 for all $t\in[0,T]$. Consequently, we have
		 \begin{align}\label{varmeas}
		 \frac{2\min\{\mu,\alpha\}}{t}\E\left[\int_0^t\|\Y(s)\|_{\V}^2\d s\right]  \leq\frac{1}{t_0}\|\x\|_{\H}^2+\Tr(\Q)+\frac{1}{t_0}\mathpzc{K}, \ \text{ for all } \ t>t_0,
		 \end{align}
		 where $\mathpzc{K}=\int_0^T\int_{Z}\|\G(t,z)\|_{\H}^2\lambda(\d z)\d t< +\infty$.  Since $\Tr(\Q)<+\infty$, by applying Markov's inequality and using \eqref{varmeas}, we obtain the following limit:
		 \begin{align}\label{tight}
		 	\lim\limits_{r\to\infty}\sup\limits_{t>t_0}\left[\frac{1}{t}\int_0^t \mathbb{P}\{\|\Y(s)\|_{\V}>R\}\d s\right]
		 	\leq\lim\limits_{R\to\infty}\sup\limits_{t>t_0}\frac{1}{R^2}
		 	\E\left[\frac{1}{t}\int_0^t \|\Y(s)\|_{\V}^2\d s\right]=0.
		 \end{align}
		 Let us set
		 \begin{align*}
		 	\zeta_{t,\y}(\cdot)=\frac{1}{t}\int_0^t \Pi_{s,\y}(\cdot)\d s,
		 \end{align*}
		 where, $\Pi_{t,\y}(\Lambda)=\mathbb{P}\{\Y(t,\y)\in\Lambda\}, \ \Lambda\in\mathscr{B}(\H)$, is the law of $\Y(t,\y)$ for each $\y\in\H$. Hence, along with the estimate in \eqref{tight} and compact embedding $\V\hookrightarrow\H$, it is clear that the sequence of probability measures $\{\zeta_{t,\y}\}_{t>0}$ is tight.
		Therefore, from Krylov-Bogoliubov theorem (cf. \cite[Theorem 3.1.1, Chapter 3]{gdp2}), there exists an invariant measure $\eta$ for the transition semigroup $\{\mathtt{P}_t\}_{t\geq0}$.
%		 \begin{align*}
%		 	\int_{\H}(\mathtt{P}_t\upvarphi)(\x)\eta(\d \x)=\int_{\H}\upvarphi(\x)\eta(\d \x), \ \text{ for all }\ \upvarphi\in\C_b(\H). 
%		 \end{align*}
		 
		 \subsection{Uniqueness of invariant measures} 
		The uniqueness of the invariant measure is a direct consequence of the exponential stability of the system \eqref{scbfabs}. Although \cite{MT2} already established this result, we present a complete proof here for completeness. Note that SCBF system \eqref{scbfabs} is perturbed by both additive  L\'evy noise. Consequently, the exponential stability of the system \eqref{scbfabs} follows directly from the monotonicity estimates \eqref{3.4} and \eqref{monoC1}, together with Gr\"onwall's inequality. Therefore, we omit the proof of the following proposition, which establishes the exponential stability of the system \eqref{scbfabs}.
		 \begin{proposition}\label{expstbvar}
		 	Let $\Y_1(\cdot)$ and $\Y_2(\cdot)$ be two solutions of the system \eqref{scbfabs} with initial conditions $\Y_1(0)=\y_1$ and $\Y_2(0)=\y_2$, where $\y_1,\y_2\in\H$, respectively. Then, for $r>3$ with $\alpha>\varrho$ and $r=3$ with $2\beta\mu\geq1$, we have 
		 	\begin{align*}
		 		\|\Y_1(t)-\Y_2(t)\|_{\H}\leq e^{-\mathpzc{k}_2t}\|\y_1-\y_2\|_{\H}, \ \mathbb{P}-
		 		\text{ a.s. },
		 	\end{align*}
		 	where
		 	\begin{equation}\label{kappalpha}
		 		\mathpzc{k}_2=
		 		\left\{
		 		\begin{aligned}
		 			\alpha-\varrho, \ &\text{ when } r>3,\\
		 			\alpha, \ &\text{ when } r=3 \ \text{ with } 2\beta\mu\geq1,
		 		\end{aligned}
		 		\right.
		 	\end{equation}
		 	where $\varrho$ is defined in \eqref{eqn-varrho}. 
		 \end{proposition}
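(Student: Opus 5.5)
The plan is a pathwise energy argument for the difference of the two solutions. Since the noise in \eqref{scbfabs} is additive, the Wiener and compensated Poisson terms are the same for both solutions and cancel in the difference. Set $\w(t):=\Y_1(t)-\Y_2(t)$. Then, $\mathbb{P}$-a.s., $\w$ satisfies the deterministic (random-coefficient) equation
\begin{align*}
\frac{\d\w(t)}{\d t}+\mu\mathcal{A}\w(t)+\alpha\w(t)+\mathfrak{B}(\Y_1(t))-\mathfrak{B}(\Y_2(t))+\beta\big(\mathfrak{C}(\Y_1(t))-\mathfrak{C}(\Y_2(t))\big)=\boldsymbol{0}, \qquad \w(0)=\y_1-\y_2,
\end{align*}
in $\V'+\wi\L^{\frac{r+1}{r}}$. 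Although $\Y_1,\Y_2$ are only c\'adl\'ag, their jumps coincide, so $\w$ is absolutely continuous with values in $\V'+\wi\L^{\frac{r+1}{r}}$. It also lies in $\mathrm{L}^2(0,T;\V)\cap\mathrm{L}^{r+1}(0,T;\wi\L^{r+1})$ pathwise. The Lions--Magenes type energy equality (as used in \cite{MT2} for the deterministic CBF system) therefore gives
\begin{align*}
\frac12\frac{\d}{\d t}\|\w\|_{\H}^2+\mu\|\nabla\w\|_{\H}^2+\alpha\|\w\|_{\H}^2+\beta\langle\mathfrak{C}(\Y_1)-\mathfrak{C}(\Y_2),\w\rangle=-\langle\mathfrak{B}(\Y_1)-\mathfrak{B}(\Y_2),\w\rangle.
\end{align*}

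For $r>3$, I bound the right-hand side using \eqref{3.4} with $\u=\Y_1$ and $\v=\Y_2$. This gives at most $\frac{\mu}{2}\|\nabla\w\|_{\H}^2+\frac{\beta}{4}\||\Y_2|^{\frac{r-1}{2}}\w\|_{\H}^2+\varrho\|\w\|_{\H}^2$. For the absorption term I need the lower monotonicity bound
\begin{align*}
\langle\mathfrak{C}(\u)-\mathfrak{C}(\v),\u-\v\rangle\geq\frac12\||\u|^{\frac{r-1}{2}}(\u-\v)\|_{\H}^2+\frac12\||\v|^{\frac{r-1}{2}}(\u-\v)\|_{\H}^2 .
\end{align*}
This is the standard estimate from \cite{MT2}, and it follows from the pointwise identity $(|a|^{r-1}a-|b|^{r-1}b)\cdot(a-b)=\frac12(|a|^{r-1}+|b|^{r-1})|a-b|^2+\frac12(|a|^{r-1}-|b|^{r-1})(|a|^2-|b|^2)$, whose last term is nonnegative. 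Since $\mathscr{P}$ is self-adjoint and $\w$ is divergence free, the projection drops out of the pairing. Hence $\beta\langle\mathfrak{C}(\Y_1)-\mathfrak{C}(\Y_2),\w\rangle\geq\frac{\beta}{2}\||\Y_2|^{\frac{r-1}{2}}\w\|_{\H}^2$, which absorbs the $\frac{\beta}{4}$ term. After absorption we are left with $\frac12\frac{\d}{\d t}\|\w\|_{\H}^2+(\alpha-\varrho)\|\w\|_{\H}^2\leq0$. Gr\"onwall's inequality then gives $\|\w(t)\|_{\H}\leq e^{-(\alpha-\varrho)t}\|\y_1-\y_2\|_{\H}$, which is meaningful exactly when $\alpha>\varrho$.

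For $r=3$ with $2\beta\mu\geq1$, I instead write $\langle\mathfrak{B}(\Y_1)-\mathfrak{B}(\Y_2),\w\rangle=\langle\mathfrak{B}(\w,\Y_2),\w\rangle=-\langle\mathfrak{B}(\w,\w),\Y_2\rangle$, using \eqref{syymB}. Cauchy--Schwarz and Young bound this by $\mu\|\nabla\w\|_{\H}^2+\frac{1}{4\mu}\||\Y_2|\w\|_{\H}^2$. The absorption term provides $\frac{\beta}{2}\||\Y_2|\w\|_{\H}^2$, and $\frac{\beta}{2}\geq\frac{1}{4\mu}$ is exactly the condition $2\beta\mu\geq1$. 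This yields $\frac{\d}{\d t}\|\w\|_{\H}^2+2\alpha\|\w\|_{\H}^2\leq0$ and hence the rate $\alpha$.

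The main obstacle is the rigorous justification of the energy equality for $\w$, given that the solutions only have c\'adl\'ag paths and the nonlinear terms only live in the dual space $\V'+\wi\L^{\frac{r+1}{r}}$. My approach is to note that the pathwise difference has no martingale part at all, so the deterministic energy equality for CBF-type equations applies directly (equivalently, apply It\^o's formula to $\|\w\|_{\H}^2$, where all noise terms cancel identically). Everything else is the algebra above.
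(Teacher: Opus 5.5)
Your proof is correct and follows the route the paper indicates: it omits the argument, saying it follows from the pathwise difference equation (the additive noise cancels), the estimate \eqref{3.4}, the monotonicity of $\mathfrak{C}$, and Gr\"onwall's inequality. Where the paper cites \eqref{monoC1}, which is only an upper Lipschitz-type bound, the argument actually needs the lower bound $\langle\mathfrak{C}(\u)-\mathfrak{C}(\v),\u-\v\rangle\geq\frac12\||\u|^{\frac{r-1}{2}}(\u-\v)\|_{\H}^2+\frac12\||\v|^{\frac{r-1}{2}}(\u-\v)\|_{\H}^2$, which you derive from the pointwise identity; your $r=3$ case (Young's inequality against the full $\mu\|\nabla\w\|_{\H}^2$, to match the factor $\frac{\beta}{2}$) correctly recovers the condition $2\beta\mu\geq1$.
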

		 		
\begin{theorem}\label{uniqinv}
Under the assumptions of Proposition \ref{expstbvar}, the invariant measure $\eta$ for $\{\mathtt{P}_t\}_{t\geq 0}$ is unique. 
\end{theorem}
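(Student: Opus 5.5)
The plan is the standard coupling argument driven by Proposition \ref{expstbvar}: the noise is additive, so two solutions started at different points and driven by the same $\W$ and $\wi\pi$ contract pathwise. First we show that for every $\uppsi$ that is bounded and Lipschitz on $\H$ and every $\y\in\H$,
\begin{align*}
\lim_{t\to\infty}\Big|\mathtt{P}_t\uppsi(\y)-\int_{\H}\uppsi(\x)\,\eta(\d\x)\Big|=0 .
\end{align*}
Since bounded Lipschitz functions determine Borel probability measures on $\H$, uniqueness then follows at once. If $\eta_1,\eta_2$ are both invariant, invariance gives $\int_{\H}\uppsi\,\d\eta_i=\int_{\H}\mathtt{P}_t\uppsi\,\d\eta_i$ for every $t$. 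Letting $t\to\infty$ under the integral, by dominated convergence with $|\mathtt{P}_t\uppsi|\leq\|\uppsi\|_0$, both sides become the same number $\lim_t \mathtt{P}_t\uppsi(0)$. Hence $\eta_1=\eta_2$.

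To obtain the convergence, fix an invariant measure $\eta$ and a bounded Lipschitz $\uppsi$ with Lipschitz constant $L_\uppsi$. Invariance gives
\begin{align*}
\Big|\mathtt{P}_t\uppsi(\y)-\int_{\H}\uppsi\,\d\eta\Big|
=\Big|\int_{\H}\big(\mathtt{P}_t\uppsi(\y)-\mathtt{P}_t\uppsi(\x)\big)\eta(\d\x)\Big|
\leq\int_{\H}\E\big[|\uppsi(\Y(t,\y))-\uppsi(\Y(t,\x))|\big]\eta(\d\x).
\end{align*}
Proposition \ref{expstbvar} then bounds the integrand by $\min\{2\|\uppsi\|_0,\,L_\uppsi e^{-\mathpzc{k}_2 t}\|\y-\x\|_{\H}\}$, and $\mathpzc{k}_2>0$ under the standing assumptions ($\alpha>\varrho$ for $r>3$, and $2\beta\mu\geq1$ for $r=3$). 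The minimum tends to $0$ pointwise in $\x$ and is dominated by the constant $2\|\uppsi\|_0$, so dominated convergence gives the limit $0$.

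This argument needs no moment of $\eta$. If one prefers an explicit rate $C(1+\|\y\|_{\H})e^{-\mathpzc{k}_2t}$, one would show $\int_{\H}\|\x\|_{\H}^2\,\d\eta<\infty$. To do so, integrate the energy identity \eqref{4.26} against $\eta$, after truncating with $\|\x\|_{\H}^2\wedge R$ and using the monotone convergence theorem. Such a moment bound is only needed for the quantitative version, not for uniqueness itself.

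The main obstacle is justifying Proposition \ref{expstbvar} itself, whose proof the paper omits. Because the noise is additive, the difference $\Y_1-\Y_2$ satisfies a deterministic (random-coefficient) equation with no stochastic integrals. One takes the inner product with $\Y_1-\Y_2$, controls the convective term using \eqref{3.4}, and uses the monotonicity of $\mathfrak{C}$ to absorb $\frac{\beta}{4}\||\Y_2|^{\frac{r-1}{2}}(\Y_1-\Y_2)\|_{\H}^2$. This leaves $\frac{\d}{\d t}\|\Y_1-\Y_2\|_{\H}^2\leq-2(\alpha-\varrho)\|\Y_1-\Y_2\|_{\H}^2$, and Gr\"onwall's inequality finishes. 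The pathwise differentiation is justified by the regularity in Definition \ref{defjumpe}, since the jump parts cancel exactly.
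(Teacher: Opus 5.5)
Your proof is correct, and it runs on the same engine as the paper's: the pathwise contraction of Proposition \ref{expstbvar}, available because the noise is additive so both solutions are driven by the same $\W$ and $\wi\pi$, combined with invariance of $\eta$. The difference lies in how you pass to the limit $t\to\infty$, and it is a real improvement.

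\emph{The paper's route.} It integrates the linear bound $\mathrm{Lip}(\upvarphi)e^{-\mathpzc{k}_2t}\|\x_1-\x_2\|_{\H}$ against $\eta$ in \eqref{expfast}, and against $\eta\otimes\tilde\eta$ in the uniqueness step. This requires $\int_{\H}\|\x\|_{\H}\,\d\eta<\infty$ for every invariant measure, including the arbitrary competitor $\tilde\eta$, and the paper asserts this without proof. What it gains is exponential mixing with the explicit rate $\mathrm{Lip}(\upvarphi)e^{-\mathpzc{k}_2t}\big(\|\x_1\|_{\H}+\int_{\H}\|\x\|_{\H}\,\d\eta\big)$.

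\emph{Your route.} The truncated bound $\min\{2\|\uppsi\|_0,L_\uppsi e^{-\mathpzc{k}_2t}\|\y-\x\|_{\H}\}$ with dominated convergence removes the moment requirement entirely and loses only the rate. If you want the rate back, a clean moment bound for an arbitrary invariant measure comes from invariance applied to $\|\cdot\|_{\H}^2\wedge R$. Use $\E[\xi\wedge R]\leq(\E\xi)\wedge R$ together with $\E\|\Y(t,\x)\|_{\H}^2\leq e^{-2\alpha t}\|\x\|_{\H}^2+C$, then let $t\to\infty$ and afterwards $R\to\infty$. By contrast, integrating \eqref{4.26} against $\eta$ directly presupposes the moment.

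\emph{Closing the argument.} Concluding via the fact that bounded Lipschitz functions are measure-determining is also cleaner than the paper's appeal to density of $\mathrm{Lip}_b(\H)$ in $\C_b(\H)$. That density is false in the sup norm, since the uniform closure of $\mathrm{Lip}_b(\H)$ is the bounded uniformly continuous functions, and it is not needed for uniqueness.

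\emph{Your sketch of Proposition \ref{expstbvar}.} You rightly use the coercive bound $\langle\mathfrak{C}(\u)-\mathfrak{C}(\v),\u-\v\rangle\geq\frac12\||\v|^{\frac{r-1}{2}}(\u-\v)\|_{\H}^2$, not the upper bound \eqref{monoC1} that the paper cites. For $r=3$, replace \eqref{3.4} by $|\langle\mathfrak{B}(\w,\w),\Y_2\rangle|\leq\mu\|\nabla\w\|_{\H}^2+\frac{1}{4\mu}\||\Y_2|\w\|_{\H}^2$; this is exactly where the condition $2\beta\mu\geq1$ enters.
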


\begin{proof}
Let $\upvarphi\in\mathrm{Lip}_b(\H)$. By using the fact that $\eta$ is a invariant measure along with Proposition \ref{expstbvar}, we estimate
\begin{align}\label{expfast}
\left|(\mathtt{P}_t\upvarphi)(\x_1)-\int_{\H}\upvarphi(\x_2)\eta(\d\x_2)\right|
&=
\left|\E\big[\upvarphi(\Y(t,\x_1))\big]-\int_{\H}\mathtt{P}_t\upvarphi(\x_2)\eta(\d\x_2)
\right| 
\nonumber\\&\leq
\left|\int_{\H}\E\big[\upvarphi(\Y(t,\x_1))-\upvarphi(\Y(t,\x_2))\big]\eta(\d\x_2)
\right| 
\nonumber\\&\leq
\mathrm{Lip}(\upvarphi)\int_{\H}\E\big[\|\Y(t,\x_1)-\Y(t,\x_2)\|_{\H}\big]
\eta(\d\x_2)
\nonumber\\&\leq
\mathrm{Lip}(\upvarphi)e^{-\mathpzc{k}_2 t}\int_{\H}\|\x_1-\x_2\|_{\H}
\eta(\d\x_2)
\nonumber\\&\leq
\mathrm{Lip}(\upvarphi)e^{-\mathpzc{k}_2 t}\bigg[\|\x_1\|_{\H}+
\underbrace{\int_{\H}\|\x_2\|_{\H}\eta(\d\x_2)}_{<+\infty}\bigg]
\nonumber\\&\to0 \ \text{ as } t\to\infty.
\end{align}
Since, $\mathrm{Lip}_b(\H)$ is dense in $\C_b(\H)$, thus we have shown that 
\begin{align*}
\lim\limits_{t\to\infty}(\mathtt{P}_t\upvarphi)(\x_1)=
\int_{\H} \upvarphi(\x_2)\eta(\d\x_2), \ \eta-\text{ a.s., } 
\end{align*}
for all $\x_1\in\H$ and $\upvarphi\in\C_b(\H)$. Limit \eqref{expfast} shows that $\mathtt{P}_t\upvarphi(\x_1)$ converges exponentially fast to equilibrium, which is the exponential mixing property. Let us now establish the uniqueness of the invariant measure. Let $\eta$ and $\tilde\eta$ be two invariant measures for the transition semigroup $\{\mathtt{P}_t\}_{t\geq 0}$. Then, proceeding in a similar manner as we did in \eqref{expfast}, we find for all $\upvarphi\in\C_b(\H)$
\begin{align*}
&\left|\int_{\H}\uppsi(\x_1)\eta(\d\x_1)-\int_{\H}\uppsi(\x_2)
\tilde\eta(\d\x_2)\right|
\nonumber\\&=
\left|\int_{\H}\mathtt{P}_t\uppsi(\x_1)\eta(\d\x_1)-\int_{\H}\mathtt{P}_t\uppsi(\x_2)
\tilde\eta(\d\x_2)\right|
\nonumber\\&=
\left|\int_{\H}\int_{\H}\big[\mathtt{P}_t\uppsi(\x_1)-\mathtt{P}_t\uppsi(\x_2)\big]
\eta(\d\x_1)\tilde\eta(\d\x_2)\right|
\nonumber\\&\leq
\mathrm{Lip}(\upvarphi)e^{-\mathpzc{k}_2 t}\int_{\H}\int_{\H}
\|\x_1-\x_2\|_{\H}\eta(\d\x_1)\tilde\eta(\d\x_2)
\nonumber\\&
\to0,  \ \text{ as } t\to\infty,
\end{align*}
Hence, we obtain $\eta=\tilde\eta$.
\end{proof}

\subsection{Transition semigroup in $\mathrm{L}^2(\H,\eta)$} In what follows, we assume that $\mathrm{G}$ does not depend on $t$. Let $\uppsi\in\C_b(\H)$. By using Jensen's inequality along with the invariance of $\eta$, we deduce the following:
\begin{align}\label{extd}
\int_{\H} (\mathtt{P}_t\uppsi(\x))^2\eta(\d\x)\leq\int_{\H} \uppsi^2(\x)\eta(\d\x).
\end{align}
Since $\C_b(\H)$ is dense in $\mathrm{L}^2(\H,\eta)$ (\cite{gdp1}), an application of the Lebesgue dominated convergence theorem ensures that the family $\{\mathtt{P}_t\}_{t\geq0}$ admits a unique extension to a strongly continuous contraction semigroup on $\mathrm{L}^2(\H,\eta)$, which we continue to denote by $\{\mathtt{P}_t\}_{t\geq0}$ (cf. \cite{gdp1,gdp7}). We denote by $\mathcal{N}_2: \D(\mathcal{N}_2)\subset\mathrm{L}^2(\H;\eta)\to\mathrm{L}^2(\H;\eta)$ the infinitesimal generator associated with this semigroup.

We introduce algebra of \emph{exponential functions}: 
\begin{align*}
\mathscr{E}_{\A}(\H):=\mathrm{linspan}\{\uppsi_{\h}(\cdot)=e^{i(\h,\cdot)}: \h\in\D(\mathcal{A})\}, \ i=\sqrt{-1}.
\end{align*}
 From \cite[Proposition 1.2, Chapter 1, pp. 7]{gdp1}, the space $\mathscr{E}_{\mathcal{A}}(\H)$ is dense in $\mathrm{L}^2(\H;\eta)$.  Let us define on $\mathscr{E}_{\mathcal{A}}(\H)$, the following Kolmogorov integro-differential operator:
\begin{align}\label{4p5}
(\mathcal{N}_0\uppsi)(\x)=&\frac{1}{2}\Tr\left[\Q\D_{\x}^2\uppsi(\x)\right]-(\mu\mathcal{A}\x+\alpha\x+\mathfrak{B}(\x)+\beta\mathfrak{C}(\x),\D_{\x}\uppsi(\x))
\nonumber\\&\quad+
\int_{Z}\left[\uppsi(\x+\G(z))-\uppsi(\x)-(\G(z),\D_{\x}\uppsi(\x))\right]\lambda(\d z) , \ \text{ for all }\ \x\in\H.
\end{align}
 \emph{One of the central challenges in this work is to elucidate the relationship between the infinitesimal generator $\mathcal{N}_2$ and the Kolmogorov operator $\mathcal{N}_0$ defined in \eqref{4p5}.}
By an  application of It\^o's formula, it follows that $$\mathcal{N}_2\uppsi=\mathcal{N}_0\uppsi,$$ for all $\uppsi\in\mathscr{E}_{\A}(\H)$ (see Step-2 of Theorem \ref{thm4.7}). The primary objective is to show that $\mathcal{N}_2$ coincides with the closure of $\mathcal{N}_0$, equivalently, that $\mathscr{E}_{\A}(\H)$ forms a core for $\mathcal{N}_2$. In this case, $\mathcal{N}_0$ is said to be \emph{essentially $m$-dissipative} (see \cite{VBGA}). 

A key implication of this property is the well-posedness of the elliptic problem
\begin{align}\label{frd1}
	\kappa\uppsi-\mathcal{N}_0\uppsi=g, \ \kappa>0,
\end{align}
in the sense of Friedrichs. More precisely, for every $\kappa>0$ and each $g\in\mathrm{L}^2(\H;\eta)$, there exists a function $\uppsi$ that can be approximated by a sequence $\{\uppsi_n\}_{n\in\N}\subset\mathscr{E}_{\A}(\H)$ such that 
(see \cite[pp. 75]{EANK})
\begin{align}\label{frd2}
	\lim\limits_{n\to\infty}\uppsi_n=\uppsi \ \text{ and } \lim\limits_{n\to\infty}(\kappa\uppsi_n-\mathcal{N}_0\uppsi_n)= g \  \text{ in } \ \mathrm{L}^2(\H;\eta).
\end{align} 
In order to prove that the Kolmogorov operator is essentially $m-$dissipative, we need to control the integral quantity $$\int_{\H}\|\mathcal{A}\x\|_{\H}^2\eta(\d\x).$$ The following result is useful to get the convenient bound of this integral, which plays a key role in the argument.
    
    \begin{proposition}\label{lem5.2}
    	Assume that the Hypothesis \ref{trQ1} holds and let
    	\begin{align}\label{assmptg}
         \mathpzc{K}_1:=\int_{Z}\|\G(z)\|_{\V}^{2m}
    	\lambda(\d z)<+\infty,
    	\end{align} 
    	for some $m\geq1$. Then, for $r>3$ with $\alpha>\varrho$ and $r=3$ with $2\beta\mu\geq1$, we have 
    	\begin{align}\label{4p44}
    	&\int_{\H}\|\y\|_{\V}^{2m-2}\|\mathcal{A}\y\|_{\H}^2\eta(\d\y) +\int_{\H}\|\y\|_{\V}^{2m}\d\eta(\y)+
    	\int_{\H}\|\y\|_{\V}^{2m-2}\|\y\|_{\wi\L^{r+1}}^{r+1}\d\eta(\y)
    	\nonumber\\&\quad+
    	\int_{\H}\|\y\|_{\V}^{2m-2} \||\y|^{\frac{r-1}{2}}\nabla\y\|_{\H}^{2}\d\eta(\y)
        \leq C,
    	\end{align}
    	where the constant $C=\mathrm{const}\{\Tr(\Q_1),\mathpzc{K}_1, m,\kappa,\mu,\beta\}$.
    \end{proposition}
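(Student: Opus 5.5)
The plan is to test the invariant measure against the Lyapunov-type functional
\[
\Phi_m(\y):=\|\y\|_{\V}^{2m}=\big(\|\y\|_{\H}^2+\|\nabla\y\|_{\H}^2\big)^m .
\]
Formally $\int_{\H}\mathcal{N}_0\Phi_m\,\d\eta=0$ by invariance. Expanding $\mathcal{N}_0\Phi_m$ should then show that the dissipative drift part controls all four integrals in \eqref{4p44}, while the noise contributions are of lower order. Concretely, I apply It\^o's formula to $\Phi_m(\Y(t,\y))$, take expectations, integrate the initial datum $\y$ against $\eta$, use $\int_{\H}\E[\Phi_m(\Y(t,\y))]\eta(\d\y)=\int_{\H}\Phi_m\,\d\eta$, and divide by $t$.

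\textbf{Drift term.} The derivatives are
\[
\D\Phi_m(\y)=2m\|\y\|_{\V}^{2m-2}(\I+\mathcal{A})\y,
\]
\[
\D^2\Phi_m(\y)=2m\|\y\|_{\V}^{2m-2}(\I+\mathcal{A})+4m(m-1)\|\y\|_{\V}^{2m-4}(\I+\mathcal{A})\y\otimes(\I+\mathcal{A})\y.
\]
Pairing $\D\Phi_m$ with $\mu\mathcal{A}\y+\alpha\y+\mathfrak{B}(\y)+\beta\mathfrak{C}(\y)$ produces the factor $2m\|\y\|_{\V}^{2m-2}$ times the following:
\begin{itemize}
\item $\mu\|\mathcal{A}\y\|_{\H}^2+\mu\|\nabla\y\|_{\H}^2+\alpha\|\y\|_{\V}^2$;
\item $\beta\|\y\|_{\wi\L^{r+1}}^{r+1}$, from $(\mathfrak{C}(\y),\y)$;
\item $\beta\||\y|^{\frac{r-1}{2}}\nabla\y\|_{\H}^2$ plus a nonnegative gradient term, from \eqref{toruseq};
\item the convective term $(\mathfrak{B}(\y),\mathcal{A}\y)$, since $(\mathfrak{B}(\y),\y)=0$.
\end{itemize}
The convective term is bounded by \eqref{syymB3}. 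It costs $\frac{\mu}{2}\|\mathcal{A}\y\|_{\H}^2+\frac{\beta}{4}\||\y|^{\frac{r-1}{2}}\nabla\y\|_{\H}^2+\varrho\|\nabla\y\|_{\H}^2$, and the last piece is absorbed since $\alpha>\varrho$. For $r=3$ I instead bound it by $\frac{\mu}{2}\|\mathcal{A}\y\|^2+\frac{1}{2\mu}\||\y|\nabla\y\|^2$ and use $2\beta\mu\ge1$. The result is a lower bound of the form
\[
c\,\|\y\|_{\V}^{2m-2}\Big(\|\mathcal{A}\y\|_{\H}^2+\|\y\|_{\V}^2+\|\y\|_{\wi\L^{r+1}}^{r+1}+\||\y|^{\frac{r-1}{2}}\nabla\y\|_{\H}^2\Big),
\]
which is exactly the integrand of \eqref{4p44}.

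\textbf{Noise terms.} For the Gaussian part,
\[
\tfrac12\Tr[\Q\D^2\Phi_m]\le m\|\y\|_{\V}^{2m-2}\big(\Tr\Q+\Tr\Q_1\big)+2m(m-1)\|\y\|_{\V}^{2m-4}\|\Q^{\frac12}(\I+\mathcal{A})\y\|_{\H}^2 .
\]
By Hypothesis \ref{trQ1}, $\|\Q^{\frac12}(\I+\mathcal{A})\y\|_{\H}^2\le C\|\y\|_{\V}^2$, so the whole Gaussian contribution is $\le C\|\y\|_{\V}^{2m-2}$. For the jump part, Taylor's formula with the mean value of $\D^2\Phi_m$ on the segment $[\y,\y+\G(z)]$ gives
\[
\Phi_m(\y+\G)-\Phi_m(\y)-(\D\Phi_m(\y),\G)\le C_m\big(\|\y\|_{\V}^{2m-2}\|\G\|_{\V}^2+\|\G\|_{\V}^{2m}\big).
\]
Integrating in $\lambda$ then requires $\int_Z\|\G\|_{\V}^2\d\lambda$ and $\mathpzc{K}_1$. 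I would either also assume the former or interpolate it from $\mathpzc{K}_1$ together with a small-jump bound. Young's inequality, $C\|\y\|_{\V}^{2m-2}\le\varepsilon\|\y\|_{\V}^{2m}+C_\varepsilon$, then absorbs all lower-order terms into the drift lower bound, and \eqref{4p44} follows.

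\textbf{Main obstacle: rigor.} We do not know a priori that $\int\Phi_m\,\d\eta<\infty$, and the solution lives only in $\H$, so It\^o's formula for $\Phi_m$ is not directly available. I would first run the argument on Galerkin approximations $\Y_n$, whose invariant measures $\eta_n$ come from Krylov--Bogoliubov; there $\Phi_m$ is smooth and the stochastic integrals are martingales up to localizing stopping times. To avoid needing finiteness of $\int\Phi_m\,\d\eta_n$, I would apply It\^o's formula to a bounded cutoff such as $\Phi_m/(1+\epsilon\Phi_m)$, or integrate the time-averaged identity starting from $\y=0$ rather than against $\eta_n$, and let the cutoff go by monotone convergence. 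This yields bounds uniform in $n$. Uniqueness of $\eta$ (Theorem \ref{uniqinv}) and continuous dependence give $\eta_n\rightharpoonup\eta$. Since the integrand is lower semicontinuous on $\H$, the Portmanteau theorem passes \eqref{4p44} to the limit. Handling the jump term's unbounded integrand inside this cutoff and limiting procedure is the step I expect to be most delicate.
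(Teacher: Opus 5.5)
Your argument is the paper's argument at its core. You test the invariance of $\eta$ against $\|\y\|_{\V}^{2m}$, handle the drift with \eqref{toruseq} and \eqref{syymB3}, handle the jump remainder with Taylor's formula, and absorb lower-order terms by Young's inequality. Where you differ is in the places the paper treats only formally, and there your version is more reliable.

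(i) The paper gets $\int_{\H}\mathcal{N}_2\upvarphi\,\d\eta=0$ for $\upvarphi=\|\y\|_{\V}^{2m}$ ``by the definition of $\mathcal{N}_2$ and dominated convergence''. That step presupposes $\upvarphi\in\D(\mathcal{N}_2)$, which is essentially the conclusion. Your Galerkin/cutoff/lower-semicontinuity scheme, or the time averages from $\y=\boldsymbol{0}$ combined with uniqueness of $\eta$, actually justifies it. The step you call delicate is harmless. For $F_\varepsilon(s)=s/(1+\varepsilon s)$, concavity gives $\tfrac12F_\varepsilon''(\Phi_m)\|\Q^{\frac12}\mathcal{D}\Phi_m\|_{\H}^2\le0$. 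It also gives $F_\varepsilon(\Phi_m(\y+\G))-F_\varepsilon(\Phi_m(\y))-F_\varepsilon'(\Phi_m(\y))(\mathcal{D}\Phi_m(\y),\G)\le F_\varepsilon'(\Phi_m(\y))\big[\Phi_m(\y+\G)-\Phi_m(\y)-(\mathcal{D}\Phi_m(\y),\G)\big]$. So every term carries the common weight $(1+\varepsilon\Phi_m)^{-2}$, the absorption goes through, and monotone convergence removes $\varepsilon$.

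(ii) The paper writes $\Tr[\Q\mathcal{D}^2\upvarphi]=2m(2m-1)\|\y\|_{\V}^{2m-2}\Tr(\Q+\Q_1)$ as an identity. It is only an upper bound, and it follows exactly as you do from $\|\Q^{\frac12}(\I+\mathcal{A})\y\|_{\H}^2\le\Tr(\Q+\Q_1)\|\y\|_{\V}^2$.

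Two caveats remain. First, you are right that $\int_Z\|\G(z)\|_{\V}^2\lambda(\d z)<\infty$ is needed. However, you cannot interpolate it from $\mathpzc{K}_1$ when $\lambda(Z)=\infty$, because small jumps are precisely the obstruction, so add it as a hypothesis. This is not a defect relative to the paper. The paper bounds the remainder by $\frac{\alpha-\varrho}{2}\|\y\|_{\V}^{2m}+\varrho_2\|\G(z)\|_{\V}^{2m}$, and after integrating in $\lambda(\d z)$ it silently drops the factor $\lambda(Z)$ in front of $\|\y\|_{\V}^{2m}$. Its argument therefore closes only for finite $\lambda$. In that case H\"older gives $\int_Z\|\G\|_{\V}^2\d\lambda\le\lambda(Z)^{1-\frac1m}\mathpzc{K}_1^{\frac1m}$, which is your extra assumption. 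With only $\mathpzc{K}_1<\infty$ and $\lambda(Z)=\infty$, neither argument proves \eqref{4p44}.

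Second, consider $r=3$, where $\varrho$ in \eqref{eqn-varrho} is undefined and the paper just says ``similarly''. Your symmetric split $\frac{\mu}{2}\|\mathcal{A}\y\|_{\H}^2+\frac{1}{2\mu}\||\y|\nabla\y\|_{\H}^2$ loses the weighted-gradient term entirely when $2\beta\mu=1$. Use $|(\mathfrak{B}(\y),\mathcal{A}\y)|\le\frac{3\mu}{4}\|\mathcal{A}\y\|_{\H}^2+\frac{1}{3\mu}\||\y|\nabla\y\|_{\H}^2$ instead, which leaves the coefficient $\beta-\frac{1}{3\mu}\ge\frac{1}{6\mu}>0$.
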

    \begin{proof}
    		Let us set 
    	\begin{align}
    		\upvarphi(\y)=\|\y\|_{\V}^{2m}, \ \text{ for all }\ \y\in\V. 
    	\end{align}
    	Then, it can be easily seen that 
    	\begin{align}
    	\mathcal{D}_{\y}\upvarphi(\y)\z&=2m\|\y\|_{\V}^{2m-2}\big((\mathcal{A}+\I)^{\frac12}\y,
    	(\mathcal{A}+\I)^{\frac12}\z\big)\label{est1}, \ \text{ for all }  \z\in\V,\\
    	\mathcal{D}_{\y}^2\upvarphi(\y)(\z_1,\z_2)&=2m(2m-2)\|\y\|_{\V}^{2m-4}
    	\big((\mathcal{A}+\I)^{\frac12}\y,
    	(\mathcal{A}+\I)^{\frac12}\z_1\big)
    	\big((\mathcal{A}+\I)^{\frac12}\y,(\mathcal{A}+\I)^{\frac12}\z_2\big)
    	\nonumber\\&\quad+
    	2m\|\y\|_{\V}^{2m-2}\big((\mathcal{A}+\I)\z_1,\z_2\big), \ \text{ for all }  \z_1,\z_2\in\V \label{est2}
    	\end{align}
    	and 
    	\begin{align}\label{qaeaq}
    		\Tr[\Q\mathcal{D}_{\y}^2\upvarphi(\y)]=2m(2m-1)\|\y\|_{\V}^{2m-2}
    		\mathrm{Tr}(\Q+\Q_1).
    	\end{align}
    	
    		Together with above expressions, we deduce the following:
    	\begin{align}\label{n2op}
    		\mathcal{N}_2\upvarphi(\y)&=\frac{1}{2}\Tr[\Q\mathcal{D}_{\y}^2\upvarphi(\y)]-(\mu\mathcal{A}\y+\alpha\y+
    		\mathfrak{B}(\y)+\beta\mathfrak{C}(\y),\mathcal{D}_{\y}\upvarphi(\y))
    		\nonumber\\&\quad+
    		\int_{Z}\left[\upvarphi(\y+\G(z))-\upvarphi(\y)-(\G(z),\mathcal{D}_{\y}\upvarphi(\y))\right]\lambda(\d z) \nonumber\\&=
    	    m(2m-1)\|\y\|_{\V}^{2m-2}\mathrm{Tr}(\Q+\Q_1)-
    	    2\mu m\|\y\|_{\V}^{2m-2}(\|\mathcal{A}\y\|_{\H}^2+\|\nabla\y\|_{\H}^2)-
    	    2\alpha m\|\y\|_{\V}^{2m}
    	    \nonumber\\&\quad-
    		2m\|\y\|_{\V}^{2m-2}(\mathfrak{B}(\y),\mathcal{A}\y)-
    		2\beta m\|\y\|_{\V}^{2m-2}\big[(\mathfrak{C}(\y),\mathcal{A}\y)+
    		\|\y\|_{\wi\L^{r+1}}^{r+1}\big]
    		\nonumber\\&\quad+
    		\int_{Z}\left[\|\y+\G(z)\|_{\V}^{2m}-\|\y\|_{\V}^{2m}-
    		2m\|\y\|_{\V}^{2m-2}(\G(z),(\mathcal{A}+\I)\y)\right]
    		\lambda(\d z).
    	\end{align}
    Due to the invariance of $\eta$, we have 
    	\begin{align}\label{pttra}
    	\int_{\H}\mathtt{P}_t\upvarphi(\y)\eta(\d\y)&=\int_{\H}\upvarphi(\y)\eta(\d\y)\nonumber\\
    	\text{ or } \int_{\H} \frac{1}{t}(\mathtt{P}_t\upvarphi(\y)-\upvarphi(\y))\eta(\d\y)&=0 \ 
    	\text{ for all } \ t>0.
    	\end{align}
    	By the definition of  $\mathcal{N}_2$ along with the Lebesgue Dominated Convergence Theorem, we may pass to the limit  in \eqref{pttra} to deduce that $$\int_{\H}\mathcal{N}_2\upvarphi(\y)\eta(\d\y)=0.$$  Therefore, we infer   from \eqref{n2op} that
    	\begin{align}\label{4p45}
    		&2\mu m \int_{\H}\|\y\|_{\V}^{2m-2}\big[\|\mathcal{A}\y\|_{\H}^2+ \|\nabla\y\|_{\H}^2\big]\eta(\d\y) +2\alpha m\int_{\H}
    		\|\y\|_{\V}^{2m}\d\eta(\y)
    		\nonumber\\&\quad+
    		2\beta  m\int_{\H}\|\y\|_{\V}^{2m-2}\big[(\mathfrak{C}(\y),\mathcal{A}\y)+
    		\|\y\|_{\wi\L^{r+1}}^{r+1}\big]\d\eta(\y)
    		\nonumber\\&=
    		m(2m-1)\mathrm{Tr}(\Q+\Q_1)\int_{\H}\|\y\|_{\V}^{2m-2}\d\eta(\y)- 
    		2m\int_{\H}\|\y\|_{\V}^{2m-2}(\mathfrak{B}(\y),\mathcal{A}\y)\d\eta(\y)
    		\nonumber\\&\quad+
    		\int_{\H}\left[\int_{Z}\left[\upvarphi(\y+\G(z))-\upvarphi(\y)-2m\|\y\|_{\V}^{2m-2}
    		(\G(z),(\mathcal{A}+\I)\y)\right]
    		\lambda(\d z)\right]\d\eta(\y)
    	\end{align}
    	Utilizing the equality \eqref{toruseq} and \eqref{syymB3} into \eqref{4p45}, we obtain
    	\begin{align}\label{4p451}
    		&\mu m \int_{\H}\|\y\|_{\V}^{2m-2}\big[\|\mathcal{A}\y\|_{\H}^2+ 2\|\nabla\y\|_{\H}^2\big]\eta(\d\y) +2\alpha m\int_{\H}
    		\|\y\|_{\V}^{2m}\d\eta(\y)
    		\nonumber\\&+
    		\frac{3\beta m}{2}\int_{\H}\|\y\|_{\V}^{2m-2} \||\y|^{\frac{r-1}{2}}\nabla\y\|_{\H}^{2}\d\eta(\y)+
    	     2\beta m\int_{\H}\|\y\|_{\V}^{2m-2}\|\y\|_{\wi\L^{r+1}}^{r+1}\d\eta(\y)
    		\nonumber\\&\leq
    	    m(2m-1)\mathrm{Tr}(\Q+\Q_1)\int_{\H}\|\y\|_{\V}^{2m-2}\d\eta(\y)+
    		2\varrho m\int_{\H}\|\y\|_{\V}^{2m-2}\|\nabla\y\|_{\H}^2\d\eta(\y)+ 
    		\nonumber\\&\quad+
    		\int_{\H}\left[\int_{Z}\left[\|\y+\G(z)\|_{\V}^{2m}-\|\y\|_{\V}^{2m}-2m\|\y\|_{\V}^{2m-2}
    		(\G(z),(\mathcal{A}+\I)\y)\right]\lambda(\d z)\right]\d\eta(\y).
    	\end{align}
    By an application of Taylor's formula with integral remainder \cite[Theorem 7.9.1, pp. 508]{PGC}, we calculate
    \begin{align}\label{4pgte1}
    &\|\y+\G(z)\|_{\V}^{2m}-\|\y\|_{\V}^{2m}-2m\|\y\|_{\V}^{2m-2}
    (\G(z),(\mathcal{A}+\I)\y)
    \nonumber\\&=\int_0^1
    \bigg[2m(2m-2)\|\y+\theta\G(z)\|_{\V}^{2m-4}\big((\mathcal{A}+\I)^{\frac12}
    (\y+\theta\G(\z)),(\mathcal{A}+\I)^{\frac12}\G(z)\big)^2
    \nonumber\\&\quad+
    2m\|\y+\theta\G(\z)\|_{\V}^{2m-2}\big((\mathcal{A}+\I)^{\frac12}\G(z),
    (\mathcal{A}+\I)^{\frac12}\G(z)\big)\bigg]\d\theta
     \nonumber\\&\leq
     2m(2m-1)\big[\|\y\|_{\V}^{2m-2}+\|\G(z)\|_{\V}^{2m-2}\big]
     \|\G(z)\|_{\V}^2
     \nonumber\\&\leq
     \frac{\alpha-\varrho}{2}\|\y\|_{\V}^{2m}+\varrho_2\|\G(z)\|_{\V}^{2m},
    \end{align}
   where $\varrho_2:=1+\left[\frac{2(2m-2)(2m-1)}{\alpha-\varrho}\right]^{m-1}\frac{1}{m}$.
   Moreover, by an application of Young's inequality, we find
   \begin{align}\label{4pgte2}
   	m(2m-1)\mathrm{Tr}(\Q+\Q_1)\|\y\|_{\V}^{2m-2}
   	\leq\frac{\alpha-\varrho}{2}\|\y\|_{\V}^{2m}+\varrho_3,
   \end{align}
   where $\varrho_3:=\frac{1}{m}\left[\frac{2(m-1)}{m(\alpha-\varrho)}\right]^{m-1}
   \left(m(2m-1)\mathrm{Tr}(\Q+\Q_1)\right)^{m}$. 
       On combining \eqref{4pgte1}-\eqref{4pgte2} and substituting into \eqref{4p451}, we obtain
   \begin{align*}
   	&\mu m \int_{\H}\|\y\|_{\V}^{2m-2}\big[\|\mathcal{A}\y\|_{\H}^2+ 2\|\nabla\y\|_{\H}^2\big]\eta(\d\y) +(\alpha-\varrho)m\int_{\H}
   	\|\y\|_{\V}^{2m}\d\eta(\y)
   	\nonumber\\&\quad+
   	\frac{3\beta m}{2}\int_{\H}\|\y\|_{\V}^{2m-2} \||\y|^{\frac{r-1}{2}}\nabla\y\|_{\H}^{2}\d\eta(\y)+
   	2\beta m\int_{\H}\|\y\|_{\V}^{2m-2}\|\y\|_{\wi\L^{r+1}}^{r+1}\d\eta(\y)
   	\nonumber\\&\leq
   	\varrho_3+\varrho_2
   	\int_{\H}\left[\int_{Z}\|\G(z)\|_{\V}^{2m}\lambda(\d z)\right]\d\eta(\y),
   \end{align*}
   which yields \eqref{4p44}. Similarly, one can also establish the above estimate for $r=3$ with $2\beta\mu\geq1$. 
    \end{proof}

    \section{Essential $m$-dissipativity of the Kolmogorov operator: Main result}\label{disscore}\setcounter{equation}{0}
    We now turn to the proof that the Kolmogorov operator $\mathcal{N}_0$ is essentially $m$-dissipative. In particular, we show that 
    $\overline{\mathcal{N}_0}^{\|\cdot\|_{\mathrm{L}^2(\H;\eta)}}=\mathcal{N}_2.$
    For completeness, we briefly review the relevant definitions and results required in what follows:
    
    We say that a linear operator $\mathscr{A}:\D(\mathscr{A})\subset\mathcal{H}\to\mathcal{H}$ in a Hilbert space $\mathcal{H}$  is \emph{dissipative} if
    \begin{align}\label{dissp}
    	\|\upvarphi\|_{\mathcal{H}}\leq \frac{1}{\kappa}\|\kappa\upvarphi-\mathscr{A}\upvarphi\|_{\mathcal{H}}\ \text{ for all }\ \upvarphi\in\D(\mathscr{A}),\ \kappa>0. 
    \end{align}
    Any dissipative operator is \emph{closable}, that is, it has a closed extension. The dissipative operator $\mathscr{A}$  is called \emph{$m$-dissipative} if $\mathrm{Range}(\kappa\I-\mathscr{A})=\mathcal{H}$ for some $\kappa>0$. An operator $\mathscr{A}$ with dense domain is $m$-dissipative if and only if it is the infinitesimal generator of a strongly continuous semigroup of contractions in $\mathcal{H}$. 
%    The following result is crucial to get the required argument.
%       \begin{theorem}[Lumer-Phillips, {\cite[Theorem 3.20]{gdp1}}]
%    	Let $\mathscr{A}:\D(\mathscr{A})\subset\mathcal{H}\to\mathcal{H}$  be a dissipative operator in the Hilbert space $\mathcal{H}$ such that $\D(\mathscr{A})$ is dense in $\mathcal{H}$.  Assume that for some $\kappa>0$,  the range of $\kappa\I-\mathscr{A}$ is dense in $\mathcal{H}$. Then the closure of $\mathscr{A}$ is $m$-dissipative. 
%    \end{theorem}

    \begin{proposition}[$\mathcal{N}_2$ is an extension of $\mathcal{N}_0$]\label{n2en0}
    Assume that the assumptions of Proposition \ref{lem5.2} hold. Then, for any $\upvarphi\in\mathscr{E}_{\mathcal{A}}(\H)$, we have
    	\begin{align*}
    		\upvarphi\in\D(\mathcal{N}_2)  \ \text{ and } \ \mathcal{N}_2\upvarphi
    		=\mathcal{N}_0\upvarphi.
    	\end{align*}
    \end{proposition}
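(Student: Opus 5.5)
By linearity it suffices to treat a single exponential function $\upvarphi=\uppsi_{\h}=e^{i(\h,\cdot)}$ with $\h\in\D(\mathcal{A})$. Its derivatives are explicit:
\[
\mathcal{D}_{\x}\upvarphi(\x)=i\upvarphi(\x)\h, \qquad \mathcal{D}^2_{\x}\upvarphi(\x)=-\upvarphi(\x)\,\h\otimes\h .
\]
Substituting these into \eqref{4p5} gives
\[
\mathcal{N}_0\upvarphi(\x)=-\tfrac12(\Q\h,\h)\upvarphi(\x)-i\upvarphi(\x)\langle\mu\mathcal{A}\x+\alpha\x+\mathfrak{B}(\x)+\beta\mathfrak{C}(\x),\h\rangle+\upvarphi(\x)\int_Z\big[e^{i(\h,\G(z))}-1-i(\h,\G(z))\big]\lambda(\d z).
\]
The plan is as follows. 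First show that $\mathcal{N}_0\upvarphi\in\mathrm{L}^2(\H,\eta)$. Then apply Itô's formula for the jump SDE \eqref{scbfabs} to $\upvarphi(\Y(t,\x))$, which yields
\[
\mathtt{P}_t\upvarphi(\x)-\upvarphi(\x)=\int_0^t\mathtt{P}_s(\mathcal{N}_0\upvarphi)(\x)\,\d s .
\]
Finally, use strong continuity of $\mathtt{P}_t$ on $\mathrm{L}^2(\H,\eta)$ to conclude that $\frac1t(\mathtt{P}_t\upvarphi-\upvarphi)\to\mathcal{N}_0\upvarphi$ in $\mathrm{L}^2(\H,\eta)$.

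\emph{Integrability of $\mathcal{N}_0\upvarphi$.} Since $|\upvarphi|=1$, the diffusion term is bounded. The jump term is bounded by $\frac12\|\h\|_{\H}^2\int_Z\|\G(z)\|_{\H}^2\lambda(\d z)$, which is finite by \eqref{assmptg} (with the $\H$-norm dominated by the $\V$-norm, after checking $\lambda$-integrability of $\|\G\|_{\V}^2$; if only the $2m$-th moment is assumed, I would take $m=1$ or split according to $\|\G\|\le1$). For the drift I use $\h\in\D(\mathcal{A})$ together with the symmetry \eqref{syymB}:
\[
|(\mathcal{A}\x,\h)|\le\|\x\|_{\H}\|\mathcal{A}\h\|_{\H},\qquad |\langle\mathfrak{B}(\x),\h\rangle|=|b(\x,\h,\x)|\le\|\x\|_{\wi\L^4}^2\|\nabla\h\|_{\wi\L^\infty}\le C\|\x\|_{\V}^2\|\h\|_{\D(\mathcal{A})}
\]
(in $d\le3$, using $\H^2\subset\mathrm{L}^\infty$ or a slightly stronger $\h$-norm). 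The absorption term satisfies
\[
|\langle\mathfrak{C}(\x),\h\rangle|\le\|\x\|_{\wi\L^{2r}}^r\|\h\|_{\H}\le C\|\x\|_{\wi\L^{r+1}}^{r}\|\h\|_{\wi\L^{\infty}} .
\]
Hence $|\mathcal{N}_0\upvarphi(\x)|^2\le C(1+\|\x\|_{\V}^4+\|\x\|_{\wi\L^{r+1}}^{2r})$. Proposition \ref{lem5.2} with $m=2$ handles the $\|\x\|_{\V}^4$ term. For $\|\x\|^{2r}_{\wi\L^{r+1}}$ I would instead bound $\|\x\|_{\wi\L^{2r}}^{2r}$ through the Sobolev/Agmon-type interpolation $\|\x\|_{\wi\L^{2r}}\lesssim\|\x\|_{\V}^{a}\|\mathcal{A}\x\|_{\H}^{1-a}$. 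The resulting quantity is then controlled by the weighted integral $\int\|\x\|_{\V}^{2m-2}\|\mathcal{A}\x\|_{\H}^2\d\eta$ from \eqref{4p44} with $m$ chosen large (for $r\le5$ in $d=3$; any $r$ in $d=2$), after a Young splitting. \textbf{This moment bound is the step I expect to be the main obstacle}: the exponents must be matched to \eqref{4p44}, and it is exactly where the restriction on $r$ and the absorption-based estimates replace exponential moments.

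\emph{Itô's formula and passage to the limit.} For fixed $\x$, Itô's formula applied to $\upvarphi(\Y(t))$ holds, with the Gaussian and compensated Poisson integrals being true martingales because their integrands are bounded ($|\upvarphi|=1$, and $|e^{i(\h,\Y+\G)}-e^{i(\h,\Y)}|\le\|\h\|_{\H}\|\G\|_{\H}$). The drift is written in the weak form of Definition \ref{defjumpe}, using that $\h\in\V\cap\wi\L^{r+1}$. Taking expectations gives the identity above for every $\x$ in the support of $\eta$; this requires integrability of $\int_0^t\E|\mathcal{N}_0\upvarphi(\Y(s,\x))|\d s$, which holds by the energy estimate of Theorem \ref{extunjump} (for the $\mathcal{B}$- and $\mathcal{C}$-terms). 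The identity then extends to an identity in $\mathrm{L}^2(\H,\eta)$, since the $\mathrm{L}^2(\H,\eta)$-extension of $\mathtt{P}_s$ acts on $\mathcal{N}_0\upvarphi\in\mathrm{L}^2(\H,\eta)$. Finally,
\[
\Big\|\tfrac1t(\mathtt{P}_t\upvarphi-\upvarphi)-\mathcal{N}_0\upvarphi\Big\|_{\mathrm{L}^2(\H,\eta)}\le\frac1t\int_0^t\|\mathtt{P}_s\mathcal{N}_0\upvarphi-\mathcal{N}_0\upvarphi\|_{\mathrm{L}^2(\H,\eta)}\d s\to0,
\]
where the bound follows from Minkowski's inequality and the convergence from the strong continuity of $\{\mathtt{P}_t\}$ on $\mathrm{L}^2(\H,\eta)$. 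Hence $\upvarphi\in\D(\mathcal{N}_2)$ and $\mathcal{N}_2\upvarphi=\mathcal{N}_0\upvarphi$.
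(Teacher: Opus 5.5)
Your argument is correct, but it differs from the paper's proof in two places.

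\emph{Handling of the drift.} The paper bounds the drift crudely:
$|(\mu\mathcal{A}\y+\alpha\y+\mathfrak{B}(\y)+\beta\mathfrak{C}(\y),\mathcal{D}_{\y}\upvarphi(\y))|\le\|\upvarphi\|_1\bigl(\mu\|\mathcal{A}\y\|_{\H}+\alpha\|\y\|_{\H}+\|\mathfrak{B}(\y)\|_{\H}+\beta\|\mathfrak{C}(\y)\|_{\H}\bigr)$.
It therefore needs $\int_{\H}\bigl(\|\mathcal{A}\y\|_{\H}^2+\|\mathfrak{B}(\y)\|_{\H}^2+\|\mathfrak{C}(\y)\|_{\H}^2\bigr)\d\eta<\infty$. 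The last two integrals are only checked inside the proof of Theorem \ref{thm4.7}. You instead move $\mathcal{A}$ and the gradient onto $\h\in\D(\mathcal{A})$. As a result:
\begin{itemize}
\item you only need $\eta$-integrability of $\|\x\|_{\wi\L^4}^4$ and $\|\x\|_{\wi\L^r}^{2r}$;
\item your It\^o identity holds for every $\x\in\H$, not only on the support of $\eta$, using just the weak formulation in Definition \ref{defjumpe} and the energy estimate of Theorem \ref{extunjump}.
\end{itemize}
The paper's generic bound is quicker to write but leans on the heavier absorption-based moment estimates. Yours is more economical in moments.

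\emph{Passage to the limit.} The paper shows that $\frac1t(\mathtt{P}_t\upvarphi-\upvarphi)$ converges pointwise and is equibounded in $\mathrm{L}^2(\H;\eta)$, via H\"older's inequality and invariance of $\eta$, and then appeals to dominated convergence. You write $\mathtt{P}_t\upvarphi-\upvarphi=\int_0^t\mathtt{P}_s\mathcal{N}_0\upvarphi\,\d s$ in $\mathrm{L}^2(\H;\eta)$ and use strong continuity. This avoids the implicit weak-compactness step in the paper's final line. You should add the short truncation argument identifying $\x\mapsto\E[\mathcal{N}_0\upvarphi(\Y(s,\x))]$ with the $\mathrm{L}^2(\H;\eta)$-extension of $\mathtt{P}_s$ applied to the unbounded function $\mathcal{N}_0\upvarphi$.

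\emph{Estimates to repair.} Two of your estimates need fixing, though the conclusions stand.
\begin{itemize}
\item $\|\nabla\h\|_{\wi\L^\infty}$ is not controlled by $\|\h\|_{\D(\mathcal{A})}$ for $d=2,3$. Use H\"older with exponents $(4,2,4)$ instead: $|b(\x,\h,\x)|\le\|\x\|_{\wi\L^4}^2\|\nabla\h\|_{\H}$.
\item The chain $\|\x\|_{\wi\L^{2r}}^r\|\h\|_{\H}\le C\|\x\|_{\wi\L^{r+1}}^r\|\h\|_{\wi\L^\infty}$ is false as written. Use directly $|\langle\mathfrak{C}(\x),\h\rangle|\le\|\x\|_{\wi\L^r}^r\|\h\|_{\wi\L^\infty}$, together with $\D(\mathcal{A})\hookrightarrow\wi\L^\infty$ for $d\le3$.
\end{itemize}

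\emph{The step you flag as the main obstacle.} With these corrections it is immediate.
\begin{itemize}
\item For $r\ge3$ one has $\|\x\|_{\wi\L^4}^4\le|\mathbb{T}^d|+\|\x\|_{\wi\L^{r+1}}^{r+1}$, so $m=1$ already suffices for the $\mathfrak{B}$-term.
\item For $d=2$, or $d=3$ with $r\le5$, the embedding $\V\hookrightarrow\wi\L^{r+1}$ gives $\|\x\|_{\wi\L^r}^{2r}\le C\|\x\|_{\V}^{r-1}\|\x\|_{\wi\L^{r+1}}^{r+1}$. This is integrable by \eqref{4p44} with $m=\frac{r+1}{2}$, so no interpolation against $\|\mathcal{A}\x\|_{\H}$ is needed. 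This is the same moment hypothesis the paper uses tacitly.
\item The jump term only requires $\int_Z\|\G(z)\|_{\H}^2\lambda(\d z)<\infty$. This is already the standing assumption $\mathpzc{K}<\infty$ of Theorem \ref{extunjump}, so no splitting of $\G$ is needed.
\end{itemize}
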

    \begin{proof}
    	Let $\upvarphi\in\mathscr{E}_{\mathcal{A}}(\H)$. On employing the infinite-dimensional  It\^o formula \cite[Theorem 3.7.2, Chapter 3]{VMBR} to the function $\upvarphi(\cdot)$ and to the process $\Y(\cdot)=\Y(\cdot,\y)$, we have $\mathbb{P}-$a.s.
    \begin{align}\label{450}
    	&\upvarphi(\Y(t))-\upvarphi(\y)
    	\nonumber\\&=-
    	\int_0^t\big(\mu \mathcal{A}\Y(s)+\alpha\Y(s)+\mathfrak{B}(\Y(s))+\beta\mathfrak{C}(\Y(s)),
    	\mathcal{D}_{\y}\upvarphi(\Y(s))\big)\d s+ \frac{1}{2}\int_0^t\Tr(\Q\D^2_{\y}\upvarphi(\Y(s)))\d s 
    	\nonumber\\&\quad+\underbrace{
    		\int_0^t \big(\sqrt{\Q}\d\W(s),\mathcal{D}_{\y}\upvarphi(\Y(s))\big)+
    		\int_0^t\int_{Z}\big[\upvarphi(\Y(s-)+\G(z))-\upvarphi(\Y(s-))\big]
    		\wi{\pi}(\d s,\d z)}_{\mathscr{F}_t-\text{ adapted martingale }}
    	\nonumber\\&\quad+
    	\int_0^t\int_{Z}\big[\upvarphi(\Y(s-)+\G(z))-\upvarphi(\Y(s-))-
    	(\mathcal{D}_{\y}\upvarphi(\Y(s-)),\G(z))\big]\lambda(\d z)\d s,
    \end{align}
    for all $t\in[0,T]$. Consequently on taking the expectation in \eqref{450}, we find
    \begin{align}\label{451}
    	\mathtt{P}_t\upvarphi(\y)&=\E\left[\upvarphi(\Y(t,\y))\right]\nonumber\\&=\upvarphi(\y)-
    	\E\left[\int_0^t\big(\mu \mathcal{A}\Y(s)+\alpha\Y(s)+\mathfrak{B}(\Y(s))+\beta\mathfrak{C}(\Y(s)),
    	\mathcal{D}_{\y}\upvarphi(\Y(s))\big)\d s\right] 
    	\nonumber\\&\quad+
    	\frac{1}{2}\E\left[\int_0^t\Tr(\Q\D^2_{\y}\upvarphi(\Y(s)))\d s\right]
    	\nonumber\\&\quad+
    	\E\left[	\int_0^t\int_{Z}\big[\upvarphi(\Y(s-)+\G(z))-\upvarphi(\Y(s-))-
    	(\mathcal{D}_{\y}\upvarphi(\Y(s-)),\G(z))\big]\lambda(\d z)\d s\right]
    	\nonumber\\&=
    	\upvarphi(\y)+\E\left[\int_0^t\mathcal{N}_0\upvarphi(\Y(s,\y))\d s\right],
    \end{align}
    for all $t\in[0,T]$. Therefore, by the definition of $\mathcal{N}_2$, we infer
    \begin{align}
    	\mathcal{N}_2\upvarphi(\y)=\lim\limits_{t\to 0^+}\frac{1}{t}\left(\mathtt{P}_t\upvarphi(\y)-\upvarphi(\y)\right)=\lim\limits_{t\to 0^+}\frac{1}{t}\E\left[\int_0^t \mathcal{N}_0\upvarphi(\Y(s,\y))\d s\right]= \mathcal{N}_0\upvarphi(\y),\ \y\in\H,
    \end{align}
    pointwise. We will now demonstrate that $\upvarphi\in\D(\mathcal{N}_2)$. It is enough to show that
    \begin{align}
    	\frac{1}{t}(\mathrm{P}_t\upvarphi-\upvarphi)
    \end{align}
    is equibounded in $\mathrm{L}^2(\H;\eta)$ for $t\in(0,1]$. Note that for any $\upvarphi\in\mathscr{E}_{\A}(\H),$ there exist constants $C_1$ and $C_2$ depending on $\upvarphi$ such that 
    \begin{align}\label{kolm}
    	\left|\frac{1}{2}\Tr\left[\Q\mathcal{D}_{\y}^2\upvarphi(\y)\right]-(\alpha\y,\mathcal{D}_{\y}\upvarphi(\y))\right|\leq C_1+C_2\|\y\|_{\H}, \ \y\in\H,
    \end{align}
    see, for instance, \cite[Chapter 2, pp. 38]{gdp1}. Moreover, an application of Taylor's formula yields 
    \begin{align*}
    	&\left|\int_{Z}\left[\upvarphi(\y+\G(z))-\upvarphi(\y)-(\G(z),\mathcal{D}_{\y}\upvarphi(\y))\right]\lambda(\d z)\right|\nonumber\\&=\left|\int_{Z}\int_0^1(1-\theta)\mathcal{D}_{\y}^2\upvarphi(\y+\theta\G(z))(\G(z)\otimes\G(z))\d\theta\lambda(\d z)\right|\nonumber\\&\leq C_3\int_{Z}\|\G(z)\|_{\H}^2\lambda(\d z),\ \y\in\H. 
    \end{align*}
    In view of \eqref{451} and \eqref{kolm}, we have in fact 
    \begin{align}
    	|\mathrm{P}_t\upvarphi(\y)-\upvarphi(\y)|&=
    	\left|\E\left[\int_0^t \mathcal{N}_0\upvarphi(\Y(s,\y))\d s\right]\right| \nonumber\\&\leq
    	\int_0^t\E\bigg[C_1+C_2\|\Y(s,\y)\|_{\H}+\|\upvarphi\|_1\big(\mu\|\mathcal{A}\Y(s,\y)\|_{\H}+\|\mathfrak{B}(\Y(s,\y))\|_{\H}+
    	\nonumber\\&\qquad\qquad+
    	\alpha\|\Y(s,x)\|_{\H}+\beta\|\mathfrak{C}(\Y(s,\y))\|_{\H}\big)+
    	C_3\int_{Z}\|\G(z)\|_{\H}^2\lambda(\d z)\bigg]\d s,
    \end{align}
    for any $\y\in\H$. An application of H\"older's inequality yields 
    \begin{align}
    	|\mathrm{P}_t\upvarphi(\y)-\upvarphi(\y)|^2
        &\leq
        t\int_0^t\bigg\{\E\bigg[C_1+C_2\|\Y(s,\y)\|_{\H}+\|\upvarphi\|_1
    	\big(\mu\|\mathcal{A}\Y(s,\y)\|_{\H}+\|\mathfrak{B}(\Y(s,\y))\|_{\H}
    	\nonumber\\&\qquad\qquad
    	+\alpha\|\Y(s,\y)\|_{\H}+\beta\|\mathfrak{C}(\Y(s,\y))\|_{\H}\big)
    	+C_3\int_{Z}\|\G(z)\|_{\H}^2\lambda(\d z)\bigg]\bigg\}^2\d s 
    	\nonumber\\&\leq  Ct\int_0^t\bigg[C_1^2+C_2^2\E\|\Y(s,\y)\|_{\H}^2+\|\upvarphi\|_{1}^2
    	\big(\mu^2\E\|\mathcal{A}\Y(s,\y)\|_{\H}^2+\E\|\mathfrak{B}(\Y(s,\y))\|_{\H}^2
    	\nonumber\\&\qquad\qquad+
    	\alpha^2\E\|\Y(s,\y)\|_{\H}^2+\beta^2\E\|\mathfrak{C}(\Y(s,\y))\|_{\H}^2\big)
    	+C_3^2\left(\int_{Z}\|\G(z)\|_{\H}^2\lambda(\d z)\right)^2\bigg]\d s 
    	\nonumber\\&=  Ct\int_0^t\bigg[C_1^2+C_2^2\mathrm{P}_s(\|\cdot\|_{\H}^2(\y))+\|\upvarphi\|_{1}^2\big(\mu^2\mathrm{P}_s(\|\mathcal{A}\cdot\|_{\H}^2(\y))+\mathrm{P}_s(\|\mathfrak{B}(\cdot)\|_{\H}^2(\y))
    	\nonumber\\&\qquad\qquad+
    	\alpha^2\mathrm{P}_s(\|\cdot\|_{\H}^2(\y))+
    	\beta^2\mathrm{P}_s(\|\mathfrak{C}(\cdot)\|_{\H}^2(\y))\big)
    	+C_3^2\left(\int_{Z}\|\G(z)\|_{\H}^2\lambda(\d z)\right)^2\bigg]\d s.
    \end{align}
    Integrating with respect to $\eta$ over $\H$ and taking into account the invariance of $\eta$ along with \eqref{assmptg}, we derive 
    \begin{align}\label{traq}
    	\|\mathrm{P}_t\upvarphi-\upvarphi\|_{\mathrm{L}^2(\H;\eta)}^2
    	&\leq Ct^2\int_{\H}\bigg[C_1^2+C_2^2\|\y\|_{\H}^2+\mu^2\|\mathcal{A}\y\|_{\H}^2+
    	\|\mathfrak{B}(\y)\|_{\H}^2+\alpha^2\|\y\|_{\H}^2
    	\nonumber\\&\qquad\quad+
    	\beta^2\|\mathfrak{C}(\y)\|_{\H}^2+
    	C_3^2\left(\int_{Z}\|\G(z)\|_{\H}^2\lambda(\d z)\right)^2\bigg]\d\eta<+\infty, 
    \end{align}
    by using \eqref{4p44}. Therefore, from Proposition \ref{lem5.2}, it is clear that $\mathcal{N}_2\upvarphi\in\mathrm{L}^2(\H;\eta)$, and  by \eqref{451} and the Lebesgue dominated convergence theorem, one can deduce 
    \begin{align}
    	\mathcal{N}_2\upvarphi=\lim\limits_{t\to 0^+} \frac{1}{t}\left(\mathtt{P}_t\upvarphi-\upvarphi\right)=\mathcal{N}_0\upvarphi \ \text{ exists in }\ \mathrm{L}^2(\H;\eta). 
    \end{align}
    Therefore, $\mathcal{N}_2$ extends $\mathcal{N}_0$. 
    \end{proof}

     \subsection{An approximated problem}
    To prove that $\mathcal{N}_2$ is the closure of $\mathcal{N}_0$, it is convenient to approximate the system \eqref{scbfabs} by the following regular system: 
    \begin{equation}\label{scbfabsaprx}
    	\left\{
    	\begin{aligned}
    		\d\Y_{\e}(t)&+[\mu \mathcal{A}\Y_{\e}(t)+\alpha\Y_{\e}(t)+\mathfrak{B}_{\e}(\Y_{\e}(t))+
    		\alpha\X_{\e}(t)+\beta\mathfrak{C}_{\e}(\Y_{\e}(t))]\d t \\&=\sqrt{\Q}\d\W(t)+\int_{Z}\G(z)\wi{\pi}(\d t,\d z), \ t\in(0,T),\\
    		\Y_{\e}(0)&=\y\in\H,
    	\end{aligned}
    	\right.
    \end{equation}
    where 
    \begin{align}\label{modiB}
    	\mathfrak{B}_{\e}(\y)=\left\{\begin{array}{cc}\mathfrak{B}(\y)&\text{ if }\ \|\y\|_{\V}\leq\e^{-1},\\
    		\e^{-2}\|\y\|_{\V}^{-2}\mathfrak{B}(\y)&\text{ if }\ \|\y\|_{\V}>\e^{-1},\end{array}\right.
    \end{align}
    and 
    \begin{align}\label{modiC}
    	\mathfrak{C}_{\e}(\y)=\left\{\begin{array}{cc}\mathfrak{C}(\y)&\text{ if }\ \|\y\|_{\V}\leq\e^{-1},\\
    		\e^{-(r+1)}\|\y\|_{\V}^{-(r+1)}\mathfrak{C}(\y)&\text{ if }\ \|\y\|_{\V}>\e^{-1}.\end{array}\right.
    \end{align}
    For each$\e>0$, the operators $\mathfrak{B}_{\e}(\cdot)$ and $\mathfrak{C}_{\e}(\cdot)$ are smooth and bounded. Consequently, the system \eqref{scbfabsaprx} admits a \emph{unique strong solution} in the sense of Definition \ref{defjumpe}, which we denote by $\Y_{\e}(t,\y)$. 
    
    Associated with \eqref{scbfabsaprx}, let $\{\mathtt{P}_t^{\e}\}_{t\geq0}$ be the corresponding transition semigroup, defined for $\uppsi\in\C_b(\H)$ by
    \begin{align*}
    	\mathtt{P}_t^\e\uppsi(\y)=\E[\uppsi(\Y_{\e}(t,\y))].
    \end{align*}  
    Moreover, the bound in \eqref{tight} remains valid for the approximating system \eqref{scbfabsaprx}. This property guarantees the existence of an invariant measure $\eta_{\e}$ for the semigroup $\{\mathtt{P}_t^{\e}\}_{t\geq 0}$, and the family $\{\eta_{\e}\}_{\e>0}$ is tight. 
    Finally, since the  limiting semigroup $\{\mathtt{P}_t\}_{t\geq0}$ admits a unique invariant measure $\eta$, it follows that $\eta$ is the weak limit of the sequence $\{\eta_{\e}\}_{\e>0}$. 
    
    Equipped with preceding lemmas, we can now establish our main result.
    
    \begin{theorem}[$\mathcal{N}_2$ is the closure of $\mathcal{N}_0$]\label{thm4.7}
    Suppose that the conditions of Proposition \ref{lem5.2} are satisfied for $m=2,3$ and $m=\frac{r+1}{2}$. Then, $\mathcal{N}_0$ is dissipative in $\mathrm{L}^2(\H;\eta)$, and its closure $\overline{\mathcal{N}_0}$ in $\mathrm{L}^2(\H;\eta)$ coincides with the infinitesimal generator $\mathcal{N}_2$ of $\{\mathtt{P}_t\}_{t\geq 0}$ in $\mathrm{L}^2(\H;\eta)$. 
    \end{theorem}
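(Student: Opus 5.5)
The plan is the standard Lumer--Phillips route, in the spirit of \cite{gdp1,VBGD}.

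First, $\mathcal{N}_0$ is dissipative. Proposition \ref{n2en0} gives $\mathscr{E}_{\A}(\H)\subset\D(\mathcal{N}_2)$ with $\mathcal{N}_2=\mathcal{N}_0$ there. Since $\mathcal{N}_2$ generates a contraction semigroup on $\mathrm{L}^2(\H;\eta)$ by \eqref{extd}, the inequality \eqref{dissp} holds for $\mathcal{N}_2$, and hence for its restriction $\mathcal{N}_0$. Also, $\mathscr{E}_{\A}(\H)$ is dense in $\mathrm{L}^2(\H;\eta)$. By the Lumer--Phillips theorem it is therefore enough to show that $\mathrm{Range}(\kappa\I-\mathcal{N}_0)$ is dense for some $\kappa>0$. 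Then $\overline{\mathcal{N}_0}$ is $m$-dissipative and $\overline{\mathcal{N}_0}\subset\mathcal{N}_2$. Since both operators are generators, the resolvents agree, so $\overline{\mathcal{N}_0}=\mathcal{N}_2$.

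For the range condition I fix $f\in\mathscr{E}_{\A}(\H)$, which is a dense class, and pass to the regularized system \eqref{scbfabsaprx}. I set
\[
\uppsi_\e=\int_0^\infty e^{-\kappa t}\mathtt{P}^\e_t f\,\d t .
\]
Because $\mathfrak{B}_\e$ and $\mathfrak{C}_\e$ are smooth and bounded, the derivative formulas \eqref{df1}--\eqref{df2} hold for $\Y_\e$. Repeating the energy argument of Lemma \ref{lem4.4} for $\boldsymbol{\xi}^{\h}_\e=\mathcal{D}_\y\Y_\e\h$, and for the second derivative, gives $\|\mathcal{D}_\y\Y_\e(t)\h\|_{\H}\le e^{-\mathpzc{k}_1t}\|\h\|_{\H}$ and a similar bound for $\mathcal{D}^2_\y\Y_\e$. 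The truncation only multiplies $\mathfrak{B}$ and $\mathfrak{C}$ by scalar factors at most $1$, but its derivative produces extra terms that have to be checked. These bounds show $\uppsi_\e\in\C^2_b(\H)$ with $\sup_\y\|\mathcal{D}_\y\uppsi_\e\|_{\H}\le\|f\|_1/(\kappa+\mathpzc{k}_1)$, uniformly in $\e$. It\^o's formula then gives $\kappa\uppsi_\e-\mathcal{N}^\e_0\uppsi_\e=f$, where $\mathcal{N}_0^\e$ is the Kolmogorov operator built with $\mathfrak{B}_\e,\mathfrak{C}_\e$. Rewriting,
\[
\kappa\uppsi_\e-\mathcal{N}_0\uppsi_\e=f+\big(\mathfrak{B}_\e(\y)-\mathfrak{B}(\y)+\beta(\mathfrak{C}_\e(\y)-\mathfrak{C}(\y)),\mathcal{D}_\y\uppsi_\e(\y)\big).
\]
Two points remain. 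First, $\uppsi_\e$ lies in the domain of $\overline{\mathcal{N}_0}$, i.e.\ it can be approximated by exponential functions in the graph norm. For this I would use finite-dimensional Galerkin projections combined with Fourier approximation of $\C^2_b$ functions, in the standard way of \cite[Ch.~2]{gdp1}; the needed control of the drift term again comes from \eqref{4p44}. Second, the error term must tend to $0$ in $\mathrm{L}^2(\H;\eta)$.

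The main obstacle is the error term. Its square is bounded by
\[
C\|f\|_1^2\int_{\{\|\y\|_{\V}>\e^{-1}\}}\big(\|\mathfrak{B}(\y)\|_{\H}^2+\|\mathfrak{C}(\y)\|_{\H}^2\big)\d\eta .
\]
I will use the torus estimates $\|\mathfrak{B}(\y)\|_{\H}\le\|\y\|_{\widetilde\L^\infty}\|\nabla\y\|_{\H}$ and $\|\mathfrak{C}(\y)\|_{\H}^2=\|\y\|^{2r}_{\widetilde\L^{2r}}$. With Agmon/Gagliardo--Nirenberg interpolation between $\V$ and $\D(\mathcal{A})$ (valid for $d\le3$, $r\le5$), these are dominated by $\|\y\|_{\V}^{2m-2}\|\mathcal{A}\y\|_{\H}^2$ and $\|\y\|_{\V}^{2m}$ with $m\in\{2,3,\tfrac{r+1}{2}\}$. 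This is exactly why Proposition \ref{lem5.2} is assumed for those $m$. Integrability then follows from \eqref{4p44}, and the tail over $\{\|\y\|_{\V}>\e^{-1}\}$ vanishes as $\e\to0$ by dominated convergence. Here $\eta$ is the fixed invariant measure of the limit system, so there is no need to pass through $\eta_\e$. Hence $f$ lies in the closure of $\mathrm{Range}(\kappa\I-\overline{\mathcal{N}_0})$, which concludes the proof.
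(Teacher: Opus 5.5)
Your overall route is the paper's. You get dissipativity of $\mathcal{N}_0$ as a restriction of $\mathcal{N}_2$, prove range density through the truncated system \eqref{scbfabsaprx}, and reduce the error term to showing $\int_{\{\|\y\|_{\V}>\e^{-1}\}}\big(\|\mathfrak{B}(\y)\|_{\H}^2+\|\mathfrak{C}(\y)\|_{\H}^2\big)\,\d\eta\to0$. Your dissipativity argument via \eqref{extd} is actually cleaner than the paper's appeal to \eqref{453}, which is only proved after the theorem.

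\textbf{First gap: the integrability of $\|\mathfrak{C}(\y)\|_{\H}^2$.} This step fails as you state it. The bound $\|\mathfrak{C}(\y)\|_{\H}^2\le\|\y\|_{\widetilde{\L}^{2r}}^{2r}$ is homogeneous of degree $2r$. By contrast, $\|\y\|_{\V}^{2m-2}\|\mathcal{A}\y\|_{\H}^2$ and $\|\y\|_{\V}^{2m}$ are homogeneous of degree $2m$. Testing with $\y=\lambda\y_0$, $\lambda\to\infty$, shows that no interpolation-plus-Young argument can dominate $\|\y\|_{\widetilde{\L}^{2r}}^{2r}$ by these terms unless some $m\ge r$ is available. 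But $\max\{2,3,\frac{r+1}{2}\}<r$ for every $r>3$. Concretely, Gagliardo--Nirenberg in $d=3$ gives $\|\y\|_{\widetilde{\L}^{2r}}^{2r}\lesssim\|\y\|_{\V}^{r+3}\|(\mathcal{A}+\I)\y\|_{\H}^{r-3}$, which needs $m=r$. In $d=2$ you would need $\|\y\|_{\V}^{2r}$.

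So your explanation of why $m=\frac{r+1}{2}$ is assumed is not right. The missing ingredient is the pair of absorption-generated terms in \eqref{4p44}: $\|\y\|_{\V}^{2m-2}\||\y|^{\frac{r-1}{2}}\nabla\y\|_{\H}^2$ and $\|\y\|_{\V}^{2m-2}\|\y\|_{\widetilde{\L}^{r+1}}^{r+1}$. For $m=\frac{r+1}{2}$ both have degree exactly $2r$, and the paper uses them as follows.
\begin{itemize}
\item In $d=3$ it writes $\|\y\|_{\widetilde{\L}^{2r}}^{2r}\le\|\y\|_{\widetilde{\L}^{3(r+1)}}^{\frac{3(r-1)}{2}}\|\y\|_{\widetilde{\L}^{r+1}}^{\frac{r+3}{2}}$ and applies H\"older. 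It then uses $\|\y\|_{\widetilde{\L}^{3(r+1)}}^{r+1}=\||\y|^{\frac{r+1}{2}}\|_{\mathrm{L}^{6}}^{2}\lesssim\||\y|^{\frac{r-1}{2}}\nabla\y\|_{\H}^2+\|\y\|_{\widetilde{\L}^{r+1}}^{r+1}$.
\item In $d=2$ it uses $\|\y\|_{\widetilde{\L}^{2r}}^{2r}\lesssim\|\nabla\y\|_{\H}^{r-1}\|\y\|_{\widetilde{\L}^{r+1}}^{r+1}$.
\end{itemize}
Your Agmon bound for $\mathfrak{B}$ with $m=2$ is fine and matches the paper.

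\textbf{Second gap: the cut-off derivative terms.} You say that differentiating the cut-offs ``produces extra terms that have to be checked,'' but you then use the uniform bound anyway. For \eqref{modiC} this check fails. On $\{\|\y\|_{\V}>\e^{-1}\}$ one has $\mathfrak{C}_{\e}(\lambda\y)=\lambda^{-1}\mathfrak{C}_{\e}(\y)$ for $\lambda\ge1$. Hence $\langle\mathfrak{C}_{\e}'(\y)\y,\y\rangle=-\e^{-(r+1)}\|\y\|_{\V}^{-(r+1)}\|\y\|_{\widetilde{\L}^{r+1}}^{r+1}<0$, while $\langle\mathfrak{B}_{\e}'(\y)\y,\y\rangle=0$.

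So in the radial direction the truncated damping is anti-dissipative. Near $\|\y\|_{\V}=\e^{-1}$ its contribution is of order $\beta\|\y\|_{\widetilde{\L}^{r+1}}^{r+1}$. Taking $\y=\lambda\y_0$ with $\lambda\|\y_0\|_{\V}$ slightly above $\e^{-1}$ shows this is not dominated by $\mu\|\nabla\y\|_{\H}^2+\alpha\|\y\|_{\H}^2$ as $\e\to0$. The positivity of $\mathfrak{C}'$ that drives Lemma \ref{lem4.4} is therefore lost. As a result, \eqref{4p20}, and with it \eqref{bism}, does not transfer to $\boldsymbol{\xi}^{\h}_{\e}$ by that argument.

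The paper makes this same transfer without comment, so you will not find the missing step there. A complete proof needs either an approximation of $\mathfrak{C}$ that preserves its monotone structure or a different route to a uniform-in-$\e$ gradient bound.
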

    \begin{proof}
    	The proof of the theorem is divided into the following steps:
    	\vskip 0.1 cm
    	\noindent
    	\textbf{Step 1:} \emph{$\mathcal{N}_0$ is closable.} 
    	Since $\mathcal{N}_2$ is dissipative (see \eqref{453} below), therefore, by definition \eqref{dissp}, we have
    	\begin{align*}
    		\|\upvarphi\|_{\H}\leq\frac{1}{\kappa}\|\kappa\upvarphi-\mathcal{N}_2\upvarphi\|_{\H} \ \text{ for all }\  \upvarphi\in\D(\mathcal{N}_2).
    	\end{align*}
    	Using the fact that $\mathcal{N}_2$ is an extension $\mathcal{N}_0$ (see Proposition \ref{n2en0}), the above inequality implies that
    	\begin{align*}
    		\|\upvarphi\|_{\H}\leq\frac{1}{\kappa}\|\kappa\upvarphi-\mathcal{N}_0\upvarphi\|_{\H}\  \text{ for all }\  \upvarphi\in\mathscr{E}_{\A}(\H),
    	\end{align*}
    	and hence $\mathcal{N}_0$ is dissipative. Moreover, since every disspative operator is closable, so is $\mathcal{N}_0$. 
    	\vskip 0.1 cm
    	\noindent
    	\textbf{Step 2:} \emph{Closure of $\mathcal{N}_0$ coincides with $\mathcal{N}_2$.} 
    	Since $\mathcal{N}_0$ is closable, it follows from \cite[Chapter 1, Lemma 1.8]{EBD} that it admits a \emph{minimal closed extension} namely its \emph{closure}, which we denote by $\overline{\mathcal{N}_0}$.
    	Our objective is to prove that $\overline{\mathcal{N}_0}=\mathcal{N}_2$. To this end, we analyze the approximating system \eqref{scbfabsaprx} together with the invariant measure $\eta$  associated with the semigroup $\{\mathtt{P}_t\}_{t\geq 0}$.
    	
    	Let $f\in\C_b^1(\H)$. For $\upvarphi\in\mathscr{E}_{\A}(\H)$, we can express 
    	\begin{align*}
    		(\mathcal{N}_{\e}\upvarphi)(\y)
    		&=\frac{1}{2}\Tr\left[\Q\mathcal{D}_{\y}^2\upvarphi(\y)\right]-(\mu\mathcal{A}\y+
    		\alpha\y+\mathfrak{B}_{\e}(\y)+\beta\mathfrak{C}_{\e}(\y),\mathcal{D}_{\y}\upvarphi(\y))
    		\nonumber\\&\quad+
    		\int_{Z}\left[\upvarphi(\y+\G(z))-\upvarphi(\y)-(\G(z),\mathcal{D}_{\y}\upvarphi(\y))\right]
    		\lambda(\d z),
    	\end{align*}
    	which can be written as
    	\begin{align*}
    		(\mathcal{N}_{\e}\upvarphi)(\y)=
    	(\mathcal{N}_0\upvarphi)(\y)-(\mathfrak{B}_{\e}(\y)-\mathfrak{B}(\y),\mathcal{D}_{\y}\upvarphi(\y))-\beta(\mathfrak{C}_{\e}(\y)-\mathfrak{C}(\y),\mathcal{D}_{\y}\upvarphi(\y)).
    	\end{align*}
    	Since $\mathfrak{B}_{\e}(\cdot)$ and $\mathfrak{C}_\e(\cdot)$ are bounded and regular, the resolvent equation
    	\begin{align*}
    		\kappa\upvarphi_{\e}-\mathcal{N}_{\e}\upvarphi_{\e}=f 
    	\end{align*}
    	admits a unique solution  $\upvarphi_{\e}\in\D(\mathcal{N}_{\e})\cap\C_b^1(\H)$. This solution is given by the resolvent representation (see \cite[Theorem 3.21, Step 1]{gdp1})
    	\begin{align*}
    		\upvarphi_{\e}(\y)=\int_0^{\infty}e^{-\kappa t}\E\left[f(\Y_{\e}(t,\y))\right]\d t,
    	\end{align*}
    	where $\Y_\e(\cdot)$ denotes the solution of \eqref{scbfabsaprx}.
    	
    	In particular, $\upvarphi_{\e}$ is differentiable along any direction $\boldsymbol{h}\in\H$, and its directional derivative is given by
    	\begin{align*}
    		(\mathcal{D}_{\y}\upvarphi_{\e}(\y),\boldsymbol{h})=\int_0^{\infty}e^{-\kappa t}\E\left[(\mathcal{D}_{\y}f(\Y_{\e}(t,\y)),\boldsymbol{\xi}_{\e}^{\boldsymbol{h}}(t,\y))\right]\d t,
    	\end{align*}
    	where $\boldsymbol{\xi}_{\e}^h(t,\y)=
    		\mathcal{D}_{\y}\Y_{\e}(t,\y)\boldsymbol{h}$.
    	By using the Cauchy-Schwarz and H\"older inequalities, and Lemma \ref{lem4.4} (see \eqref{4p20}), it follows that 
    	\begin{align*}
    		|(\mathcal{D}_{\y}\upvarphi_{\e}(\y),\boldsymbol{h})|&\leq \int_0^{\infty}e^{-\kappa t}\left\{\E\left[\|\mathcal{D}_{\y}f(\Y_{\e}(t,\y))\|_{\H}^2\right]\right\}^{1/2}\E\left\{\left[\|\boldsymbol{\xi}_{\e}^{\boldsymbol{h}}(t,\y)\|_{\H}^2\right]\right\}^{1/2}\d t\nonumber\\&\leq \frac{1}{\kappa+\alpha-\varrho_1}\|\boldsymbol{h}\|_{\H}\|f\|_1, 
    	\end{align*}
    	 for all $\boldsymbol{h},\y\in\H.$ Consequently, the arbitrariness of $\boldsymbol{h}\in\H$ yields
    	\begin{align}\label{bism}
    		\|\mathcal{D}_{\y}\upvarphi_{\e}(\y)\|_{\H}\leq \frac{1}{\kappa+\alpha-\varrho_1}
    		\|f\|_1, \  \text{ for all }\ \y\in\H. 
    	\end{align}
    	Proceeding along the same lines as in \cite[Claim 1, pg. 171]{gdp1}, one deduces that $\upvarphi_{\e}\in\D(\overline{\mathcal{N}_0})$ and satisfies
    	\begin{align}\label{bece}
    		\lambda\upvarphi_{\e}-\overline{\mathcal{N}_0}\upvarphi_{\e}=
    		(\mathfrak{B}_{\e}(\y)-\mathfrak{B}(\y),\mathcal{D}_{\y}\upvarphi_{\e}(\y))+
    		\beta(\mathfrak{C}_{\e}(\y)-\mathfrak{C}(\y),\mathcal{D}_{\y}\upvarphi_{\e}(\y))+f. 
    	\end{align}
    	Our next objective is to prove that the perturbation terms vanish in the limit, namely 
    	\begin{align}\label{457}
    		\lim\limits_{\e\to 0}(\mathfrak{B}_{\e}(\y)-\mathfrak{B}(\y), \mathcal{D}_{\y}\upvarphi_{\e}(\y))=0\ \text{ in }\ \mathrm{L}^2(\H;\eta)
    	\end{align}
    	and 
    	\begin{align}\label{4p57}
    		\lim\limits_{\e\to 0}(\mathfrak{C}_{\e}(\y)-\mathfrak{C}(\y),\mathcal{D}_{\y}\upvarphi_{\e}(\y))=0\ \text{ in }\ \mathrm{L}^2(\H;\eta). 
    	\end{align}
    	By using the Cauchy-Schwarz inequality, \eqref{modiB} and \eqref{bism}, we note that
    	\begin{align*}
    		&\int_{\H}\left|(\mathfrak{B}_{\e}(\y)-\mathfrak{B}(\y),\mathcal{D}_{\y}\upvarphi_{\e}(\y))\right|^2\eta(\d\y)
    		\nonumber\\&=
    		\int_{\{\|\y\|_{\V}\geq\e^{-1}\}}\left|(\mathfrak{B}_{\e}(\y)-\mathfrak{B}(\y),\mathcal{D}_{\y}\upvarphi_{\e}(\y))\right|^2\eta(\d\y)
    		\nonumber\\&\leq
    		\int_{\{\|\y\|_{\V}\geq\e^{-1}\}}\left|\frac{1-\e^2\|\y\|_{\V}^2}{\e^2\|\y\|_{\V}^2}\right|\|\mathfrak{B}(\y)\|_{\H}^2\|\mathcal{D}_{\y}\upvarphi_{\e}(\y)\|_{\H}^2\eta(\d\y)
    		\nonumber\\&\leq 
    		\frac{\|f\|_1^2}{\kappa+\alpha-\varrho_1}
    		\int_{\{\|\y\|_{\V}\geq\e^{-1}\}}
    		\|\mathfrak{B}(\y)\|_{\H}^2\eta(\d\y),
    	\end{align*}
    	so that \eqref{457} follows if 
    	\begin{align}\label{4p59}
    		\lim\limits_{\e\to 0}\int_{\{\|\y\|_{\V}\geq\e^{-1}\}}\|\mathfrak{B}(\y)\|_{\H}^2\eta(\d\y)=0. 
    	\end{align}
       It is enough to show that 
    	\begin{align}\label{Best4}
    	\int_{\H}\|\mathfrak{B}(\y)\|_{\H}^2\eta(\d\y)<+\infty.
    	\end{align}
    	Indeed, for fixed $\y\in\V$, we have $\|\y\|_{\V}<+\infty$ (does not depend on $\e$). Therefore, for sufficiently small $\e>0$, the indicator function 
    	$$\mathds{1}_{\{\|\y\|_{\V}\geq\e^{-1}\}}\to0
    	\ \text{ as } \ \e\to0.$$
    	In particular, we have
    	$$\|\mathfrak{B}(\y)\|_{\H}^2\mathds{1}_{\{\|\y\|_{\V}\geq\e^{-1}\}}\to0
    	\ \text{ as } \ \e\to0.$$
    	Moreover, since $\|\mathfrak{B}(\y)\|_{\H}^2\mathds{1}_{\{\|\y\|_{\V}\geq\e^{-1}\}}\leq
    	\|\mathfrak{B}(\y)\|_{\H}^2$ and from \eqref{Best4}, $\|\mathfrak{B}(\y)\|_{\H}^2\in\mathrm{L}^1(\H;\eta)$, therefore, by an  application of the Lebesgue dominated convergence theorem, we finally deduce \eqref{4p59}.
    	
    	We now proceed to prove \eqref{Best4}. By applying H\"older's, Agmon's and Young's inequalities, and using  Proposition \ref{lem5.2} (see \eqref{4p44}), we deduce 
    	\begin{align*}
    	\int_{\H}\|\mathfrak{B}(\y)\|_{\H}^2\eta(\d x)&\leq
    	\int_{\H}\|\y\|_{\V}^2\|\y\|_{\wi\L^{\infty}}^2\eta(\d x)\leq \int_{\H}\|\y\|_{\V}^2
    	\|\y\|_{\H}^{2-\frac{d}{2}}\|(\mathcal{A}+\I)\y\|_{\H}^{\frac{d}{2}}\eta(\d x)
    	\nonumber\\&\leq
    	\int_{\H}\|\y\|_{\V}^2\left[\|\y\|_{\H}^2+\varrho_3\|(\mathcal{A}+\I)\y\|^2\right]
    	\eta(\d x)
    	\nonumber\\&<+\infty,
    	\end{align*}
    	where $\varrho_3:=\frac{d}{4}\left(\frac{4-d}{4}\right)^{\frac{4-d}{d}}$.
    	 By using \eqref{modiC} and \eqref{bism}, we calculate 
    	\begin{align*}
    	&\int_{\H}\left|(\mathfrak{C}_{\e}(\y)-\mathfrak{C}(\y),
    	\mathcal{D}_{\y}\upvarphi_{\e}(\y))\right|^2\eta(\d\y)
    	\nonumber\\&=
    	\int_{\{\|\y\|_{\V}\geq\e^{-1}\}}\left|(\mathfrak{C}_{\e}(\y)-\mathfrak{C}(\y),\mathcal{D}_{\y}\upvarphi_{\e}(\y))\right|^2\eta(\d\y)
    	\nonumber\\&\leq
    	\int_{\{\|\y\|_{\V}\geq\e^{-1}\}}\left|\frac{1-\e^{r+1}\|\y\|_{\V}^{r+1}}{\e^{r+1}\|\y\|_{\V}^{r+1}}\right|\|\mathfrak{C}(\y)\|_{\H}^2\|\mathcal{D}_{\y}\upvarphi_{\e}(\y)\|_{\H}^2\eta(\d\y)
    	\nonumber\\&\leq 
    	\frac{\|f\|_1^2}{\lambda+\alpha-\varrho_1}\int_{\{\|\y\|_{\V}\geq\e^{-1}\}} \|\mathfrak{C}(\y)\|_{\H}^2\eta(\d\y),
    	\end{align*}
    	so that \eqref{4p57} follows if 
    	\begin{align}\label{4pp57}
    		\lim\limits_{\e\to 0}\int_{\{\|\y\|_{\V}\geq\e^{-1}\}} \|\mathfrak{C}(\y)\|_{\H}^2 \eta(\d\y)=0. 
    	\end{align}
    	Similar to the above discussion, we only need to establish 
    \begin{align*}
    	\int_{\H}\|\mathfrak{C}(\y)\|_{\H}^2\eta(\d\y)<+\infty.
    \end{align*}
    	By using the interpolation inequality, we calculate
    	\begin{align}\label{Cest1}
    	\int_{\H}\|\mathfrak{C}(\y)\|_{\H}^2\eta(\d\y)\leq
    	\int_{\H}\|\y\|_{\wi\L^{2r}}^{2r}\eta(\d\y)
    	\leq
    	\int_{\H}\|\y\|_{\wi\L^{3(r+1)}}^{\frac{3(r-1)}{2}}
    	\|\y\|_{\wi\L^{r+1}}^{\frac{r+3}{2}}\eta(\d\y).
    	\end{align}
    	To proceed further, we consider the following cases:
    	\vskip 0.2cm
    	\noindent
    	\textbf{Case I:} \emph{When $d=3$ and $3< r<5$ and $r=3$ with $2\beta\mu\geq1$.}
    	We use the following exponent trick in $\|\cdot\|_{\wi\L^{r+1}}-$norm 
    	\begin{align}\label{Cest2}
    		\|\cdot\|_{\wi\L^{r+1}}=\|\cdot\|_{\wi\L^{r+1}}^{\frac{3(r-1)^2}{(r+1)(r+3)}+
    		\frac{2r(5-r)}{(r+1)(r+3)}},
    	\end{align}
    	where we have used the fact that $\frac{3(r-1)^2}{(r+1)(r+3)}<1$ for $r<5$. By substituting \eqref{Cest2} into \eqref{Cest1} and subsequently applying Holder's and Young's inequalities, we obtain the following estimate
    	\begin{align*}
    	\int_{\H}\|\mathfrak{C}(\y)\|_{\H}^2\eta(\d\y)&\leq
    	\int_{\H}\|\y\|_{\wi\L^{3(r+1)}}^{\frac{3(r-1)}{2}}
    	\|\y\|_{\wi\L^{r+1}}^{\frac{3(r-1)^2}{2(r+1)}}
    	\|\y\|_{\wi\L^{r+1}}^{\frac{r(5-r)}{r+1}}\eta(\d\y)
    	\nonumber\\&\leq
    	\left(\int_{\H}\|\y\|_{\wi\L^{3(r+1)}}^{r+1}
    	\|\y\|_{\wi\L^{r+1}}^{r-1}\eta(\d\y)\right)^{\frac{3(r-1)}{2(r+1)}}
    	\left(\int_{\H}\|\y\|_{\wi\L^{r+1}}^{2r}
    	\eta(\d\y)\right)^{\frac{5-r}{2(r+1)}}
    	\nonumber\\&\leq C_e\underbrace{
    	\left(\int_{\H}\|\y\|_{\wi\L^{3(r+1)}}^{r+1}
    	\|\y\|_{\V}^{r-1}\eta(\d\y)\right)^{\frac{3(r-1)}{2(r+1)}}
    	\left(\int_{\H}\|\y\|_{\wi\L^{r+1}}^{r+1}\|\y\|_{\V}^{r-1}
    	\eta(\d\y)\right)^{\frac{5-r}{2(r+1)}}}_{<+\infty\text{ from } \eqref{4p44} 
    	\text{ for } m=\frac{r+1}{2}},
    	\end{align*}
    	where $C_e$ is the constant appearing in the Sobolev embedding 
    	$\V\hookrightarrow\wi\L^{r+1}$.
    	\vskip 0.2cm
    	\noindent
    	\textbf{Case II:} \emph{When $d=3$ and $r=5$.}
    	By using the interpolation estimate, we obtain the following estimate
    	\begin{align*}
    	\int_{\H}\|\mathfrak{C}(\y)\|_{\H}^2\eta(\d\y)\leq
    	\int_{\H}\|\y\|_{\wi\L^{10}}^{10}\eta(\d\y)
    	&\leq
    	\int_{\H}\|\y\|_{\wi\L^{18}}^{6}\|\y\|_{\wi\L^{6}}^{4}\eta(\d\y)
    \leq 
    	C_e\underbrace{
    	\int_{\H}\|\y\|_{\wi\L^{18}}^{6}\|\y\|_{\V}^{4}\eta(\d\y)}_{<+\infty\text{ from } \eqref{4p44} \text{ for } m=3},
    	\end{align*}
    	where $C_e$ is the constant appearing in the Sobolev embedding 
    	$\V\hookrightarrow\wi\L^{6}$.
    	\vskip 0.2cm
    	\noindent
    	\textbf{Case III:} \emph{When $d=2$ and $r>3$, and $r=3$ with $2\beta\mu\geq1$.}
    	Let us first calculate $\|\cdot\|_{\wi\L^{2r}}-$norm by using the Gagliardo-Nirenberg Sobolev inequality as
    	\begin{align}\label{Cest3}
    		\|\y\|_{\wi\L^{2r}}\leq C_e\|\nabla\y\|_{\H}^{\frac{r-1}{2r}}
    		\|\y\|_{\wi\L^{r+1}}^{\frac{r+1}{2r}}.
    	\end{align}
    	Together with \eqref{Cest3},  we calculate
    	\begin{align*}
    		\int_{\H}\|\mathfrak{C}(\y)\|_{\H}^2\eta(\d\y)\leq
    		\int_{\H}\|\y\|_{\wi\L^{2r}}^{2r}\eta(\d\y)
    		\leq
    		C_e\underbrace{\int_{\H}\|\y\|_{\H}^{r-1}
    		\|\y\|_{\wi\L^{r+1}}^{r+1}\eta(\d\y)}_{<+\infty\text{ from } \eqref{4p44} 
    		\text{ for } m=\frac{r+1}{2}}.
    	\end{align*}
    	Consequently, it follows from \eqref{bece} that
    	\begin{align*}
    		\kappa\upvarphi_{\e}-\overline{\mathcal{N}_0}\upvarphi_{\e}\to f \ \text{ as }\ \e\to 0\ \text{ in }\ \mathrm{L}^2(\H;\eta).
    	\end{align*}
    This implies that range of $\kappa\I-\overline{\mathcal{N}_0}$ is dense in $\mathrm{L}^2(\H;\eta)$. Therefore, by the Lumer-Phillips Theorem (see \cite[Theorem 3.20]{gdp1}), the operator $\overline{\mathcal{N}_0}$ is $m$-dissipative. 
    
    On the other hand, $\mathcal{N}_2$ is also $m$-dissipative, being the infinitesimal generator of a strongly continuous contraction semigroup, and it extends $\mathcal{N}_0$. Hence, by uniqueness of $m-$dissipative extensions, we conclude that  $\mathcal{N}_2=\overline{\mathcal{N}_0}$ as desired. 
    \end{proof}

   \subsection{Some consequences of Theorem \ref{thm4.7}}\label{consekom}
   Having established the essential $m$-dissipativity of the Kolmogorov operator $\mathcal{N}_0$, we now turn to some of its important consequences, which play a crucial role in the subsequent analysis and applications.
   
    \subsubsection{The ``Carre du Champ's" identity} 
  The first important consequence of Theorem \ref{thm4.7} is the integration by parts formula, also referred to as the \emph{identit\'e du carr\'e du champ} associated with the Kolmogorov operator $\mathcal{N}_0$. 
To the best of our knowledge, such an identity has not been previously established in the existing literature of integro-differential Kolmogorov operator $\mathcal{N}_0$.
For reader's convenience, we establish here the following straightforward identity:
    \begin{align}\label{carredujump}
    	\mathcal{N}_0(\upvarphi^2)=2\upvarphi \mathcal{N}_0\upvarphi+\|\sqrt{\Q}\mathcal{D}_{\y}\upvarphi\|_{\H}^2
    	+\int_{Z}\left(\upvarphi(\cdot+\G(z))-\upvarphi(\cdot)\right)^2\lambda(\d z)
    	 \ \text{ for all } \upvarphi\in\mathscr{E}_{\A}(\H). 
    \end{align} 
    For $\upvarphi_{\h}(\y)=e^{i(\y,\h)}$, we have 
    \begin{align*}
    	\mathcal{D}_{\y}\upvarphi_{\h}= ie^{i(\y,\h)}\h, \ \mathcal{D}_{\y}^2\upvarphi_{\h}=-e^{i(\y,\h)}(\h\otimes\h), \ \y,\h\in\H.
    \end{align*}
    Then, we find
    \begin{align*}
    \mathcal{N}_0(\upvarphi_{\h}^2)(\y)&=
    -2e^{2i(\y,\h)}(\Q\h,\h)-2ie^{2i(\y,\h)}(\mu\mathcal{A}\y+\mathfrak{B}(\y)+\alpha\y+\beta\mathfrak{C}(\y),\h)
    \nonumber\\&\quad+
    e^{2i(\y,\h)}\int_{Z}\big[e^{2i(\G(z),\h)}-1-2i(\G(z),\h)\big]\lambda(\d z),\\
    \upvarphi_{\h}(\y)\mathcal{N}_0(\upvarphi_{\h})(\y)&=
    -\frac12e^{2i(\y,\h)}(\Q\h,\h)-ie^{2i(\y,\h)}(\mu\mathcal{A}\y+\mathfrak{B}(\y)+
    \alpha\y+\beta\mathfrak{C}(\y),\h)
    \nonumber\\&\quad+
    e^{2i(\y,\h)}\int_{Z}\big[e^{i(\G(z),\h)}-1-i(\G(z),\h)\big]\lambda(\d z)\\
    \text{ and } \ 
    \|\Q^{\frac12}\mathcal{D}_{\y}\upvarphi_{\h}(\y)\|_{\H}^2&=-e^{2i(\y,\h)}(\Q\h,\h).
    \end{align*}
    On combining the above identities, we deduce 
    \begin{align*}
    &\mathcal{N}_0(\upvarphi_{\h}^2)(\y)-2\upvarphi_{\h}(\y)\mathcal{N}_0(\upvarphi_{\h})(\y)-
    \|\Q^{\frac12}\mathcal{D}_{\y}\upvarphi_{\h}(\y)\|_{\H}^2
    \nonumber\\&=
    e^{2i(\y,\h)}\int_{Z}\big[e^{2i(\G(z),\h)}+1-2e^{i(\G(z),\h)}\big]\lambda(\d z)
    \nonumber\\&=
     \int_{Z}\big[\upvarphi_{\h}(\y+\G(z))-\upvarphi_{\h}(\y)\big]^2\lambda(\d z), \ 
     \y\in\H,
    \end{align*}
    which yields \eqref{carredujump}.
    Using the invariance of $\eta$ and integrating the identity \eqref{carredujump} over $\H$ with respect to $\eta$, we obtain
    \begin{align*}
    &	\int_{\H}\mathcal{N}_0\upvarphi(\y)\upvarphi(\y)\eta(\d \y) \nonumber\\&=-\frac{1}{2}\int_{\H}\|\sqrt{\Q}\mathcal{D}_{\y}\upvarphi(\y)\|_{\H}^2\eta(\d\y)
    -\frac12\int_{\H}
      \int_{Z}\big[\upvarphi_{\h}(\y+\G(z))-\upvarphi_{\h}(\y)\big]^2\lambda(\d z)
    	\eta(\d\y),
    \end{align*}
    which can equivalently be written as
     \begin{align}\label{carre}
    	&	\int_{\H}\mathcal{N}_0\upvarphi(\y)\upvarphi(\y)\eta(\d \y)  \nonumber\\&=-\frac{1}{2}\int_{\H}\|\sqrt{\Q}\mathcal{D}_{\y}\upvarphi(\y)\|_{\H}^2\eta(\d\y)
    	-\frac12\int_{\H}
    	\|\upvarphi_{\h}(\y+\G(z))-\upvarphi_{\h}(\y)\|_{\mathrm{L}^2_{\lambda}(Z)}^2
    	\eta(\d\y).
    \end{align}
    This identity shows that $\mathcal{N}_0$ is dissipative. As a consequence, since $\mathscr{E}_{\A}(\H)$ is dense in $\mathrm{L}^2(\H;\eta)$, the operator $\mathcal{N}_0$ is closable in $\mathrm{L}^2(\H;\eta)$.
    
     We now turn to the infinitesimal generator $\mathcal{N}_2$ associated with the semigroup $\{\mathrm{P}_t\}_{t\geq 0}$. Its domain $\D(\mathcal{N}_2)$ equipped with the graph norm
    \begin{align}\label{dn2d}
    	\|\upvarphi\|_{\D(\mathcal{N}_2)}^2=\|\upvarphi\|_{\mathrm{L}^2(\H;\eta)}^2
    	+\|\mathcal{N}_2\upvarphi\|_{\mathrm{L}^2(\H;\eta)}^2, \ \upvarphi\in\D(\mathcal{N}_2). 
    \end{align}
    Finally, since $\mathscr{E}_{\A}(\H)$ is a core of $\mathcal{N}_2$ (Theorem \ref{thm4.7}), the identity \eqref{carre} extends by density to all functions in $\D(\mathcal{N}_2)$, as stated below. 
    \begin{proposition}\label{carredu}
    	The operator $\Q^{\frac{1}{2}}\mathcal{D}_{\y}\upvarphi$, defined in $\mathscr{E}_{\A}(\H)$, admits a unique extension to a bounded linear operator from $\D(\mathcal{N}_2)$ into $\mathbb{L}^2(\H,\eta;\H)$. The extension is still denoted by
    	$\Q^{\frac{1}{2}}\mathcal{D}_{\y}\upvarphi$. Furthermore, the following ``Carre du Champ's" identity holds:
    	\begin{align}\label{453}
    		\int_{\H}\upvarphi\mathcal{N}_2\upvarphi\d\eta =-\frac{1}{2}\int_{\H}\|\sqrt{\Q}\mathcal{D}_{\y}\upvarphi\|_{\H}^2\d\eta
    		-\frac12\int_{\H}\|\upvarphi(\cdot+\G(\cdot))-\upvarphi(\cdot)\|_{\mathrm{L}^2_{\lambda}(Z)}^2\d\eta,
    	\end{align}
    	 for all $\upvarphi\in\D(\mathcal{N}_2)$. Moreover, we have following bounds:
    	\begin{align}\label{dn2d1}
    		\|\Q^{\frac{1}{2}}\mathcal{D}_{\y}\upvarphi\|_{\mathbb{L}^2(\H,\eta;\H)}\leq \|\upvarphi\|_{\D(\mathcal{N}_2)}
    		\text{ and } \|\upvarphi(\cdot+\G(\cdot))-\upvarphi(\cdot)\|_{\mathrm{L}^2(\H,\eta;
    		\mathrm{L}^2_{\lambda}(Z))}\leq\|\upvarphi\|_{\D(\mathcal{N}_2)},
    	\end{align}
    	for all $\upvarphi\in\D(\mathcal{N}_2)$.
    \end{proposition}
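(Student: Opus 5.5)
The plan is a density argument based on the identity \eqref{carre}, which holds on $\mathscr{E}_{\A}(\H)$, together with the fact from Theorem \ref{thm4.7} that $\mathscr{E}_{\A}(\H)$ is a core for $\mathcal{N}_2$. First, for $\upvarphi\in\mathscr{E}_{\A}(\H)$ (taking real parts, or working with the complexified space and $|\cdot|^2$ throughout), \eqref{carre} combined with Cauchy--Schwarz and $ab\leq\frac12(a^2+b^2)$ gives
\begin{align*}
\frac12\|\Q^{\frac12}\mathcal{D}_{\y}\upvarphi\|_{\mathbb{L}^2(\H,\eta;\H)}^2+\frac12\|\upvarphi(\cdot+\G(\cdot))-\upvarphi(\cdot)\|_{\mathrm{L}^2(\H,\eta;\mathrm{L}^2_{\lambda}(Z))}^2
=-\int_{\H}\upvarphi\,\mathcal{N}_0\upvarphi\,\d\eta\leq\frac12\|\upvarphi\|_{\D(\mathcal{N}_2)}^2 .
\end{align*}
Since $\mathcal{N}_0=\mathcal{N}_2$ on $\mathscr{E}_{\A}(\H)$ by Proposition \ref{n2en0}, this yields both bounds \eqref{dn2d1} on the core.

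Second, for $\upvarphi\in\D(\mathcal{N}_2)$ I will use Theorem \ref{thm4.7} to pick $\upvarphi_n\in\mathscr{E}_{\A}(\H)$ with $\upvarphi_n\to\upvarphi$ and $\mathcal{N}_0\upvarphi_n\to\mathcal{N}_2\upvarphi$ in $\mathrm{L}^2(\H;\eta)$, that is, $\upvarphi_n\to\upvarphi$ in the graph norm \eqref{dn2d}. Applying the linear estimate above to $\upvarphi_n-\upvarphi_k$ shows that $\{\Q^{\frac12}\mathcal{D}_{\y}\upvarphi_n\}$ is Cauchy in $\mathbb{L}^2(\H,\eta;\H)$. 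I define the extension as its limit; independence of the approximating sequence follows from the same estimate applied to the difference of two sequences. This gives the unique bounded extension with norm at most $1$.

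For the jump term, the map $\upvarphi\mapsto\upvarphi(\cdot+\G(\cdot))-\upvarphi(\cdot)$ is already defined on $\D(\mathcal{N}_2)$ only $\eta\otimes\lambda$-a.e. up to the question of whether $\upvarphi(\y+\G(z))$ depends only on the $\eta$-class of $\upvarphi$. This is the delicate point, since translates of $\eta$ need not be absolutely continuous with respect to $\eta$. I would handle it in the same way: define the jump difference on $\D(\mathcal{N}_2)$ as the $\mathrm{L}^2(\H,\eta;\mathrm{L}^2_{\lambda}(Z))$-limit of the Cauchy sequence of jump differences of $\upvarphi_n$, which is again well-defined by the bound. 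When $\upvarphi$ is continuous, for instance $\upvarphi\in\C_b$, I would check consistency with the pointwise expression via an a.e.-convergent subsequence, using $\int_{\H}\int_Z|\psi(\y+\G(z))|^2\lambda(\d z)\eta(\d\y)$ controls from the estimate.

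Finally, \eqref{453} follows by passing to the limit $n\to\infty$ in \eqref{carre} written for $\upvarphi_n$. The left side converges because $\upvarphi_n\to\upvarphi$ and $\mathcal{N}_0\upvarphi_n\to\mathcal{N}_2\upvarphi$ in $\mathrm{L}^2$, and the right-side terms converge by the Cauchy limits just constructed. The bounds \eqref{dn2d1} pass to the limit by continuity of norms. I expect the main obstacle to be the rigorous meaning of the jump difference for general $\upvarphi\in\D(\mathcal{N}_2)$, that is, well-definedness independent of representative. The plan is to sidestep this by defining it through the extension, mirroring the treatment of $\Q^{\frac12}\mathcal{D}_{\y}\upvarphi$.
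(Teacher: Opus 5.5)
Your proposal is correct and is essentially the paper's argument: apply the identity \eqref{carre} on $\mathscr{E}_{\A}(\H)$ to $\upvarphi_n-\upvarphi_m$, where $\{\upvarphi_n\}$ is a graph-norm approximating sequence supplied by Theorem \ref{thm4.7}; this makes both $\Q^{\frac12}\mathcal{D}_{\y}\upvarphi_n$ and the jump differences Cauchy, and \eqref{453} and \eqref{dn2d1} then follow by passing to the limit (deriving the bounds on the core via $ab\le\frac12(a^2+b^2)$ only reorders the paper's final H\"older step). Defining the jump difference on $\D(\mathcal{N}_2)$ as the $\mathrm{L}^2(\H,\eta;\mathrm{L}^2_{\lambda}(Z))$-limit is exactly what the paper does implicitly, without comment, so your side check for continuous $\upvarphi$ is not needed; as sketched it would not work anyway, because $\eta$-a.e.\ convergence of $\upvarphi_n$ does not by itself control $\upvarphi_n(\y+\G(z))$ unless the image of $\eta\otimes\lambda$ under $(\y,z)\mapsto\y+\G(z)$ is absolutely continuous with respect to $\eta$.
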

    \begin{proof}
    	From \eqref{carredujump} and \eqref{carre}, we have
       \begin{align}\label{carredujump1}
    \int_{\H}\upvarphi\mathcal{N}_0\upvarphi\d\eta &= -\frac{1}{2}\int_{\H}\|\sqrt{\Q}\mathcal{D}_{\y}\upvarphi\|_{\H}^2\d\eta
    -\frac12\int_{\H}\|\upvarphi(\cdot+\G(\cdot))-\upvarphi(\cdot)\|_{\mathrm{L}^2_{\lambda}(Z)}^2\d\eta,
    \end{align}
    for all $\upvarphi\in\mathscr{E}_{\A}(\H)$. Let $\upvarphi\in\D(\mathcal{N}_2)$. Then, there exists $\{\upvarphi_n\}_{n\in\N}$ in $\mathscr{E}_{\mathcal{A}}(\H)$ such that
    \begin{align}\label{ptconv}
    \upvarphi_n\to\upvarphi \ \text{ and } \ \mathcal{N}_0\upvarphi_n\to\mathcal{N}_2\upvarphi
    \ \text{ in } \ \mathrm{L}^2(\H;\eta) \ \text{ as } \ n\to\infty.
    \end{align}
    Let $m,n\in\N$ with $m<n$. Using \eqref{carredujump1}, we find 
    \begin{align*}
     &\int_{\H}\|\sqrt{\Q}\mathcal{D}_{\y}(\upvarphi_n-\upvarphi_m)\|_{\H}^2\d\eta+\int_{\H}
     \|\upvarphi_n(\cdot+\G(\cdot))-\upvarphi_n(\cdot)+\upvarphi_m(\cdot+\G(\cdot))-
     \upvarphi_m(\cdot)\|_{\mathrm{L}^2_{\lambda}(Z)}^2\d\eta
       \nonumber\\&\quad=-2
        \int_{\H}(\upvarphi_n-\upvarphi_m)(\mathcal{N}_0\upvarphi_n-\mathcal{N}_0\upvarphi_m)
        \d\eta.
    \end{align*}
    This shows that $\{\sqrt{\Q}\mathcal{D}_{\y}\upvarphi_n\}_{n\in\N}$ and 
    $\{\upvarphi_n(\cdot+\G(\cdot))-\upvarphi_n(\cdot)\}_{n\in\N}$ are Cauchy in 
    $\mathbb{L}^2(\H,\eta;\H)$ and $\mathrm{L}^2(\H,\eta;\mathrm{L}^2_{\lambda}(Z))$, respectively. Hence, by making the use of the strong convergence \eqref{ptconv}, the identity \eqref{453} follows immediately. Furthermore, by applying H\"older's inequality in \eqref{453} and incorporating \eqref{dn2d}, we finally deduce \eqref{dn2d1}.
    \end{proof}
    
    Note that we are not able to prove that the operator $\Q^{\frac{1}{2}}\D_{\x}\upvarphi$  defined in $\mathscr{E}_{\A}(\H)$
    is closable in $\mathrm{L}^2(\H,\eta)$.  
    \subsubsection{Resolvent estimates and perturbation results for $\mathcal{N}_2$}
    We next analyse resolvent and perturbation properties of the operator $\mathcal{N}_2$.
    \begin{proposition}[Estimate for resolvent of $\mathcal{N}_2$]\label{lem4.10}
    	Let $f\in\mathrm{L}^2(\H;\eta)$, $\kappa>0$ and set $\upvarphi=(\kappa\I-\mathcal{N}_2)^{-1}f$. Then we have 
    	\begin{align}\label{frd4}
    		\|\upvarphi\|_{\mathrm{L}^2(\H,\eta)}\leq \frac{1}{\sqrt{\kappa}}\|f\|_{\mathrm{L}^2(\H,\eta)}\ \text{ and }\  \|\Q^{\frac{1}{2}}\mathcal{D}_{\y}\upvarphi\|_{\mathbb{L}^2(\H,\eta;\H)}\leq \frac{1}{\sqrt{\kappa}}\|f\|_{\mathrm{L}^2(\H,\eta)}.
    	\end{align}
    \end{proposition}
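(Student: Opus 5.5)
The plan is to test the resolvent equation against $\upvarphi$ itself in $\mathrm{L}^2(\H;\eta)$ and then use the Carr\'e du Champ identity \eqref{453} of Proposition \ref{carredu}. The identity applies because $\upvarphi=(\kappa\I-\mathcal{N}_2)^{-1}f$ belongs to $\D(\mathcal{N}_2)$: for $\kappa>0$ this resolvent exists, since $\mathcal{N}_2$ generates a strongly continuous contraction semigroup. Since $\kappa\upvarphi-\mathcal{N}_2\upvarphi=f$, multiplying by $\upvarphi$ and integrating with respect to $\eta$ gives
\begin{align*}
\kappa\|\upvarphi\|_{\mathrm{L}^2(\H,\eta)}^2-\int_{\H}\upvarphi\,\mathcal{N}_2\upvarphi\,\d\eta=\int_{\H}f\upvarphi\,\d\eta .
\end{align*}
The first key step is to substitute \eqref{453} for the second term on the left. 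This yields
\begin{align*}
\kappa\|\upvarphi\|_{\mathrm{L}^2(\H,\eta)}^2+\frac12\|\Q^{\frac12}\mathcal{D}_{\y}\upvarphi\|_{\mathbb{L}^2(\H,\eta;\H)}^2+\frac12\|\upvarphi(\cdot+\G(\cdot))-\upvarphi(\cdot)\|_{\mathrm{L}^2(\H,\eta;\mathrm{L}^2_{\lambda}(Z))}^2=\int_{\H}f\upvarphi\,\d\eta .
\end{align*}
Here all three terms on the left are nonnegative. The jump term is simply dropped; it plays no role in the stated bounds.

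The second step is to estimate the right-hand side with Young's inequality in the form $\int_{\H}f\upvarphi\,\d\eta\leq\frac{\kappa}{2}\|\upvarphi\|^2_{\mathrm{L}^2(\H,\eta)}+\frac{1}{2\kappa}\|f\|^2_{\mathrm{L}^2(\H,\eta)}$. Absorbing the first piece into the left-hand side gives
\begin{align*}
\frac{\kappa}{2}\|\upvarphi\|^2_{\mathrm{L}^2(\H,\eta)}+\frac12\|\Q^{\frac12}\mathcal{D}_{\y}\upvarphi\|^2_{\mathbb{L}^2(\H,\eta;\H)}\leq\frac{1}{2\kappa}\|f\|^2_{\mathrm{L}^2(\H,\eta)} .
\end{align*}
Dropping the first term gives the gradient bound $\|\Q^{\frac12}\mathcal{D}_{\y}\upvarphi\|_{\mathbb{L}^2(\H,\eta;\H)}\leq\kappa^{-1/2}\|f\|_{\mathrm{L}^2(\H,\eta)}$. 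Dropping the second term gives $\|\upvarphi\|_{\mathrm{L}^2(\H,\eta)}\leq\kappa^{-1}\|f\|_{\mathrm{L}^2(\H,\eta)}$. The same $\kappa^{-1}$ bound also follows directly from Cauchy--Schwarz, $\kappa\|\upvarphi\|^2\leq\|f\|\,\|\upvarphi\|$, which is just the dissipativity of $\mathcal{N}_2$.

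This $\kappa^{-1}$ bound implies the stated $\kappa^{-1/2}$ bound on $\|\upvarphi\|_{\mathrm{L}^2(\H,\eta)}$ when $\kappa\geq 1$. For $0<\kappa<1$, however, $\kappa^{-1/2}<\kappa^{-1}$, so the stated form would be a strictly stronger claim, and I do not see how to obtain it from this argument. I would therefore check whether the intended first bound is $\kappa^{-1}$, which is what the computation naturally gives, or whether a restriction such as $\kappa\geq1$ is implicitly assumed.

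The only genuinely delicate point is the legitimacy of using \eqref{453} for a general element of $\D(\mathcal{N}_2)$. In particular, $\Q^{\frac12}\mathcal{D}_{\y}\upvarphi$ is meaningful only as the extension constructed in Proposition \ref{carredu}, and this extension relies on $\mathscr{E}_{\A}(\H)$ being a core, which is Theorem \ref{thm4.7}. Once that is granted, the remaining steps are elementary Hilbert-space inequalities.
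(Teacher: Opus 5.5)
Your argument is the paper's argument: test $\kappa\upvarphi-\mathcal{N}_2\upvarphi=f$ against $\upvarphi$, insert the Carr\'e du Champ identity \eqref{453}, and apply Young's inequality. This correctly yields $\|\Q^{\frac12}\mathcal{D}_{\y}\upvarphi\|_{\mathbb{L}^2(\H,\eta;\H)}\leq\kappa^{-1/2}\|f\|_{\mathrm{L}^2(\H,\eta)}$ and $\|\upvarphi\|_{\mathrm{L}^2(\H,\eta)}\leq\kappa^{-1}\|f\|_{\mathrm{L}^2(\H,\eta)}$.

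Your doubt about the first bound in \eqref{frd4} is justified, because that bound is false for $0<\kappa<1$. Take $f\equiv1$: then $\mathtt{P}_t1=1$, so $\upvarphi=\kappa^{-1}$ and $\|\upvarphi\|_{\mathrm{L}^2(\H,\eta)}=\kappa^{-1}>\kappa^{-1/2}\|f\|_{\mathrm{L}^2(\H,\eta)}$. The paper's Young step cannot give more than $\kappa^{-1}$ either, and the paper itself uses the $\kappa^{-1}$ version in \eqref{Gwedef}. So the first estimate should read $\kappa^{-1}$, or be restricted to $\kappa\geq1$.
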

    \begin{proof}
    	From the essential $m$-dissipativity of the Kolmogorov operator $\mathcal{N}_2$ (see Theorem \ref{thm4.7}), it follows that for every $\kappa>0$ and for every $f\in\mathrm{L}^2(\H;\eta)$, there exists a unique strong solution (in the sense of Friedrichs, see \eqref{frd1}-\eqref{frd2}) to the elliptic problem
    	\begin{align}\label{frd3}
    		\kappa\upvarphi-\mathcal{N}_2\upvarphi=f, \ \text{ in } \ \mathrm{L}^2(\H;\eta).
    	\end{align}
    	By multiplying equation \eqref{frd3} by $\upvarphi$, integrating with respect to $\eta$, and taking into account relation \eqref{453}, we obtain
    	\begin{align}\label{frd5}
    		\kappa\int_{\H}\upvarphi^2\d\eta&+
    		\frac{1}{2}\int_{\H}\|\sqrt{\Q}\D_{\x}\upvarphi\|_{\H}^2\d\eta
    		+\frac12\int_{\H}
    		\|\upvarphi(\cdot+\G(\cdot))-\upvarphi(\cdot)\|_{\mathrm{L}^2_{\lambda}(Z)}^2
    		\d\eta
    		=
    		\int_{\H}\upvarphi f\d\eta.
    	\end{align}
    	By applying Young's inequality in the right hand side of \eqref{frd5}, we deduce \eqref{frd4}.
    \end{proof}
The following result concerns a perturbation of the operator $\mathcal{N}_2$, which plays an important role in the application to optimal control problems involving the HJB equation.
    \begin{proposition}[Perturbation of $\mathcal{N}_2$]\label{pertubkol}
    	Let $\F\in\mathscr{B}_b(\H;\H)$ and $f\in\mathrm{L}^2(\H;\eta)$. Consider the linear operator 
    	\begin{align}
    		\mathcal{N}_1\upvarphi=\mathcal{N}_2\upvarphi+\big(\F(\y), \Q^{\frac{1}{2}}\mathcal{D}_{\y}\upvarphi\big),\ \upvarphi\in\D(\mathcal{N}_2). 
    	\end{align}	
    	Then, the resolvent set $\rho(\mathcal{N}_1)$ of $\mathcal{N}_1$ contains the half-right  $(\|\F\|_0^2,+\infty)$ and
    	\begin{align}\label{frd8}
    		(\kappa\I-\mathcal{N}_1)^{-1}f=(\kappa\I-\mathcal{N}_2)^{-1}(\I-
    		\mathcal{L}_{\kappa})^{-1}f, 
    	\end{align}
    	where 
    	\begin{align*}
    		\mathcal{L}_{\kappa}f(\y)=\big(\F(\y), \Q^{\frac{1}{2}}\mathcal{D}_{\y}(\kappa\I-\mathcal{N}_2)^{-1}f(\y)\big). 
    	\end{align*}
    \end{proposition}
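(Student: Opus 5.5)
The argument is a Neumann-series one. It rests on the resolvent bound \eqref{frd4} from Proposition \ref{lem4.10}. Fix $\kappa>\|\F\|_0^2$ and regard $\mathcal{L}_\kappa$ as an operator on $\mathrm{L}^2(\H;\eta)$. It is well defined because $(\kappa\I-\mathcal{N}_2)^{-1}f\in\D(\mathcal{N}_2)$, and Proposition \ref{carredu} extends $\Q^{\frac12}\mathcal{D}_{\y}$ continuously to $\D(\mathcal{N}_2)$ with values in $\mathbb{L}^2(\H,\eta;\H)$. By Cauchy--Schwarz pointwise and then \eqref{frd4},
\begin{align*}
\|\mathcal{L}_\kappa f\|_{\mathrm{L}^2(\H;\eta)}\leq \|\F\|_0\,\big\|\Q^{\frac12}\mathcal{D}_{\y}(\kappa\I-\mathcal{N}_2)^{-1}f\big\|_{\mathbb{L}^2(\H,\eta;\H)}\leq \frac{\|\F\|_0}{\sqrt{\kappa}}\|f\|_{\mathrm{L}^2(\H;\eta)}.
\end{align*}
Hence $\|\mathcal{L}_\kappa\|<1$ exactly when $\kappa>\|\F\|_0^2$. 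In that case $\I-\mathcal{L}_\kappa$ is invertible on $\mathrm{L}^2(\H;\eta)$ by the Neumann series, with $\|(\I-\mathcal{L}_\kappa)^{-1}\|\leq(1-\|\F\|_0\kappa^{-1/2})^{-1}$.

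Next I check that the right-hand side of \eqref{frd8} solves the resolvent equation. Given $f$, set $g=(\I-\mathcal{L}_\kappa)^{-1}f$ and $\upvarphi=(\kappa\I-\mathcal{N}_2)^{-1}g\in\D(\mathcal{N}_2)=\D(\mathcal{N}_1)$. Then
\[
\kappa\upvarphi-\mathcal{N}_1\upvarphi=(\kappa\I-\mathcal{N}_2)\upvarphi-(\F,\Q^{\frac12}\mathcal{D}_{\y}\upvarphi)=g-\mathcal{L}_\kappa g=f.
\]
So $\kappa\I-\mathcal{N}_1$ is surjective.

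For injectivity, suppose $\upvarphi\in\D(\mathcal{N}_2)$ satisfies $\kappa\upvarphi-\mathcal{N}_1\upvarphi=0$. Put $g:=(\kappa\I-\mathcal{N}_2)\upvarphi$. Then $g=(\F,\Q^{\frac12}\mathcal{D}_{\y}\upvarphi)=\mathcal{L}_\kappa g$, which forces $g=0$ and therefore $\upvarphi=0$. Thus $\kappa\I-\mathcal{N}_1$ is a bijection of $\D(\mathcal{N}_2)$ onto $\mathrm{L}^2(\H;\eta)$, and its inverse is the bounded operator in \eqref{frd8}.

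To conclude $\kappa\in\rho(\mathcal{N}_1)$, I also need $\mathcal{N}_1$ to be closed, so that the bounded inverse is a genuine resolvent. This follows from the factorization $\kappa\I-\mathcal{N}_1=(\I-\mathcal{L}_\kappa)(\kappa\I-\mathcal{N}_2)$ on $\D(\mathcal{N}_2)$, verified by the same computation as above. It is the composition of a closed operator with a bounded, boundedly invertible one, hence closed. Alternatively, $\mathcal{N}_1$ is closed because its inverse is bounded and everywhere defined.

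The only delicate point is the meaning of $\Q^{\frac12}\mathcal{D}_{\y}\upvarphi$ for general $\upvarphi\in\D(\mathcal{N}_2)$. This is supplied by Proposition \ref{carredu} (which relies on the core property of Theorem \ref{thm4.7}) and by the gradient bound \eqref{frd4}, which is exactly what fixes the threshold $\|\F\|_0^2$.
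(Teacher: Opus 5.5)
Your proof is correct and follows the paper's argument. You bound $\|\mathcal{L}_\kappa\|\le \|\F\|_0/\sqrt{\kappa}$ using \eqref{frd4}, invert $\I-\mathcal{L}_\kappa$ (by a Neumann series, where the paper uses the contraction mapping theorem), and compose with $(\kappa\I-\mathcal{N}_2)^{-1}$. Your explicit checks of injectivity and of closedness of $\mathcal{N}_1$, via the factorization $\kappa\I-\mathcal{N}_1=(\I-\mathcal{L}_\kappa)(\kappa\I-\mathcal{N}_2)$, supply details the paper leaves implicit.
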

    \begin{proof}
    	Consider the equation 
    	\begin{align}\label{frd6}
    		\kappa\upvarphi(\y)-\mathcal{N}_2\upvarphi(\y)-
    		\big(\F(\y), \Q^{\frac{1}{2}}\mathcal{D}_{\y}\upvarphi(\y)\big)=f(\y), \ \y\in\H.
    	\end{align}
    	Let us set $\kappa\upvarphi-\mathcal{N}_2\upvarphi=\uppsi$. Since, $\kappa\I-\mathcal{N}_2$ is invertible, equation \eqref{frd6} can be rewritten as 
    	\begin{align}\label{frd7}
    		\uppsi-\mathcal{L}_{\kappa}\uppsi=f,
    	\end{align}
    	where $\mathcal{L}_{\kappa}\uppsi=\big(\F(\y),\Q^{\frac{1}{2}}\mathcal{D}_{\y} (\kappa\I-\mathcal{N}_2)^{-1}\uppsi\big).$ In view of \eqref{frd4}, we find
    	\begin{align*}
    		\|\mathcal{L}_{\kappa} f\|_{\mathrm{L}^2(\H;\eta)}\leq
    		\|\F\|_{0} \|\Q^{\frac{1}{2}}\mathcal{D}_{\y}\upvarphi\|_{\mathbb{L}^2(\H,\eta;\H)}
    		\leq\frac{1}{\sqrt{\kappa}}\|\F\|_{0}\|f\|_{\mathrm{L}^2(\H,\eta)}.
    	\end{align*}
    	Therefore, whenever $\frac{1}{\sqrt{\kappa}}\|\F\|_{0}<1$, the Banach contraction mapping theorem ensures that equation \eqref{frd7} has a unique solution of the form
    	\begin{align*}
    		\uppsi=(\I-\mathcal{L}_{\kappa})^{-1}f,
    	\end{align*}
    	and the conclusion \eqref{frd8} follows immediately.
    \end{proof}
	\subsubsection{Some useful identities for the transition semigroup $\{\mathtt{P}_t\}_{t\geq 0}$}
	We next derive some useful identities for the transition semigroup $\{\mathtt{P}_t\}_{t\geq0}$ which will play a key role in solving the HJB equation in the next section.
	\begin{proposition}\label{ptp1}
		Let $\{\mathtt{P}_t\}_{t\geq0}$ be a transition semigroup in $\mathrm{L}^2(\H;\eta)$. For any $\upvarphi\in\D(\mathcal{N}_2)$ and any $t\geq0$, the following identity holds:
		\begin{align}\label{pt1}
		&\|\mathtt{P}_t\upvarphi\|_{\mathrm{L}^2(\H;\eta)}^2+\int_0^t
		\|\Q^{\frac12}\mathcal{D}_{\y}\mathtt{P}_{s}\upvarphi\|_{\mathbb{L}^2(\H,\eta;\H)}^2\d s
		+\int_0^t
		 \|\mathtt{P}_s\upvarphi(\cdot+\G(\cdot))-\mathtt{P}_s\upvarphi(\cdot)\|_{\mathrm{L}^2(\H,\eta;
			\mathrm{L}^2_{\lambda}(Z))}^2\d s
		\nonumber\\&=
		\|\upvarphi\|_{\mathrm{L}^2(\H;\eta)}^2.
		\end{align}
	\end{proposition}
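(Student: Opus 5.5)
The identity is an energy identity for the semigroup in $\mathrm{L}^2(\H;\eta)$. We obtain it by differentiating $t\mapsto\|\mathtt{P}_t\upvarphi\|_{\mathrm{L}^2(\H;\eta)}^2$ and applying the Carr\'e du Champ identity \eqref{453} of Proposition \ref{carredu} at each time.

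Fix $\upvarphi\in\D(\mathcal{N}_2)$. Since $\{\mathtt{P}_t\}_{t\geq0}$ is a strongly continuous contraction semigroup on $\mathrm{L}^2(\H;\eta)$ with generator $\mathcal{N}_2$, standard semigroup theory gives $\mathtt{P}_t\upvarphi\in\D(\mathcal{N}_2)$ for all $t\geq0$. It also gives that $t\mapsto\mathtt{P}_t\upvarphi$ is $\C^1$ from $[0,\infty)$ into $\mathrm{L}^2(\H;\eta)$, with
\begin{align*}
\frac{\d}{\d t}\mathtt{P}_t\upvarphi=\mathcal{N}_2\mathtt{P}_t\upvarphi=\mathtt{P}_t\mathcal{N}_2\upvarphi .
\end{align*}
Hence $t\mapsto\|\mathtt{P}_t\upvarphi\|_{\mathrm{L}^2(\H;\eta)}^2$ is continuously differentiable, and
\begin{align*}
\frac{\d}{\d t}\|\mathtt{P}_t\upvarphi\|_{\mathrm{L}^2(\H;\eta)}^2=2\int_{\H}\mathtt{P}_t\upvarphi\,\mathcal{N}_2\mathtt{P}_t\upvarphi\,\d\eta .
\end{align*}
We apply \eqref{453} to $\mathtt{P}_t\upvarphi\in\D(\mathcal{N}_2)$. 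The right-hand side then equals
\begin{align*}
-\|\Q^{\frac12}\mathcal{D}_{\y}\mathtt{P}_t\upvarphi\|_{\mathbb{L}^2(\H,\eta;\H)}^2-\|\mathtt{P}_t\upvarphi(\cdot+\G(\cdot))-\mathtt{P}_t\upvarphi(\cdot)\|_{\mathrm{L}^2(\H,\eta;\mathrm{L}^2_\lambda(Z))}^2 ,
\end{align*}
where $\Q^{\frac12}\mathcal{D}_{\y}$ is the extension to $\D(\mathcal{N}_2)$ given by Proposition \ref{carredu}. Integrating from $0$ to $t$ and using $\mathtt{P}_0\upvarphi=\upvarphi$ yields \eqref{pt1}.

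The only delicate point is that the integrands in time are measurable, indeed continuous, in $s$, so that the time integrals in \eqref{pt1} make sense. This follows from the bounds \eqref{dn2d1}: the maps $\upvarphi\mapsto\Q^{\frac12}\mathcal{D}_{\y}\upvarphi$ and $\upvarphi\mapsto\upvarphi(\cdot+\G(\cdot))-\upvarphi(\cdot)$ are bounded linear from $(\D(\mathcal{N}_2),\|\cdot\|_{\D(\mathcal{N}_2)})$ into their respective $\mathrm{L}^2$ spaces. Moreover, $s\mapsto\mathtt{P}_s\upvarphi$ is continuous in the graph norm, since $\mathcal{N}_2\mathtt{P}_s\upvarphi=\mathtt{P}_s\mathcal{N}_2\upvarphi$ is continuous in $s$. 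So both terms are continuous functions of $s$, and the fundamental theorem of calculus applies.

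One subtlety concerns the sign convention in \eqref{453}, where a factor $\frac12$ appears in front of both dissipation terms. As written, that identity would produce factors $\frac12\cdot2=1$ in \eqref{pt1}, which is consistent with the statement. I therefore expect no real obstacle beyond checking this bookkeeping and the graph-norm continuity argument above.
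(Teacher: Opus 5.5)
Your proposal is correct and follows the same route as the paper: you use $\mathtt{P}_t\upvarphi\in\D(\mathcal{N}_2)$, differentiate $\|\mathtt{P}_t\upvarphi\|_{\mathrm{L}^2(\H;\eta)}^2$ to obtain $2\int_{\H}\mathtt{P}_t\upvarphi\,\mathcal{N}_2\mathtt{P}_t\upvarphi\,\d\eta$, apply the Carr\'e du Champ identity \eqref{453}, and integrate in time. Your extra observation adds rigor the paper leaves implicit: $s\mapsto\mathtt{P}_s\upvarphi$ is continuous in the graph norm, so by \eqref{dn2d1} each time integrand is continuous in $s$, and the time integrals are therefore well defined.
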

	\begin{proof}
   By using the semigroup property and the fact that $\upvarphi\in\D(\mathcal{N}_2)$, one can show that $\mathtt{P}_t\upvarphi\in\D(\mathcal{N}_2)$. Furthermore, by the properties of semigroup (for instance, see \cite{EBD}), we find
   \begin{align*}
   	\frac{\d}{\d t}\|\mathtt{P}_t\upvarphi\|_{\mathrm{L}^2(\H;\eta)}^2&=
   	2\int_{\H}(\mathtt{P}_t\upvarphi)(\mathcal{N}_2\mathtt{P}_t\upvarphi)\d\eta
   	\nonumber\\&=
   	-\int_{\H}\|\sqrt{\Q}\mathcal{D}_{\y}\mathtt{P}_t\upvarphi\|_{\H}^2\d\eta
   	-\int_{\H}
   	\|\mathtt{P}_t\upvarphi(\cdot+\G(\cdot))-\mathtt{P}_t\upvarphi(\cdot)\|_{\mathrm{L}^2_{\lambda}(Z)}^2
   	\d\eta.
   \end{align*}
   On integrating above from $0$ to $t$, we deduce \eqref{pt1}.
	\end{proof}
	The following result is the direct consequence of Proposition \ref{ptp1}, together with the fact that $\D(\mathcal{N}_2)$ is dense in $\mathrm{L}^2(\H;\eta)$. Its proof follows the same arguments as those used in Proposition \ref{carredu}, and is therefore omitted.
	\begin{proposition}\label{ptp2}
	Let $\upvarphi\in\mathrm{L}^2(\H;\eta)$ and $t\geq0$. Then, for any $t_1>0$, the linear operator 
	\begin{align*}
		\upvarphi\in\D(\mathcal{N}_2)\subset\mathrm{L}^2(\H;\eta)\mapsto
		\Q^{\frac12}\mathcal{D}_{\y}\mathtt{P}_t\upvarphi\in\mathrm{L}^2(0,t_1;\mathbb{L}^2(\H,\eta;\H))
	\end{align*}
	can be uniquely extended to a linear operator from $\mathrm{L}^2(\H;\eta)$ to
	$\mathrm{L}^2(0,t_1;\mathbb{L}^2(\H,\eta;\H))$ (still denoted by the same symbol). Moreover, the following identity holds for any $t\geq0$:
	  \begin{align}\label{pt2}
		&\|\mathtt{P}_t\upvarphi\|_{\mathrm{L}^2(\H;\eta)}^2+\int_0^t
		\|\Q^{\frac12}\mathcal{D}_{\y}\P_{s}\upvarphi\|_{\mathbb{L}^2(\H,\eta;\H)}^2\d s
		+\int_0^t
		\|\mathtt{P}_s\upvarphi(\cdot+\G(\cdot))-\mathtt{P}_s\upvarphi(\cdot)\|_{\mathrm{L}^2(\H,\eta;
		\mathrm{L}^2_{\lambda}(Z))}^2\d s
		\nonumber\\&=
		\|\upvarphi\|_{\mathrm{L}^2(\H;\eta)}^2, \ \text{ for all } \ \upvarphi\in\mathrm{L}^2(\H;\eta).
	\end{align}
	\end{proposition}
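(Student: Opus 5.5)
The plan is a density argument built on the identity \eqref{pt1} of Proposition \ref{ptp1}, mirroring the extension argument in Proposition \ref{carredu}. Fix $t_1>0$. For $\upvarphi\in\D(\mathcal{N}_2)$, define the linear map
\[
\mathcal{T}\upvarphi:=\Q^{\frac12}\mathcal{D}_{\y}\mathtt{P}_{\cdot}\upvarphi\in\mathrm{L}^2(0,t_1;\mathbb{L}^2(\H,\eta;\H)).
\]
This map is well defined: $\mathtt{P}_s\upvarphi\in\D(\mathcal{N}_2)$ for every $s$, so Proposition \ref{carredu} gives meaning to $\Q^{\frac12}\mathcal{D}_{\y}\mathtt{P}_s\upvarphi$. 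Measurability in $s$ follows from the continuity of $s\mapsto\mathtt{P}_s\upvarphi$ in the graph norm of $\D(\mathcal{N}_2)$ together with the bound \eqref{dn2d1}. Similarly define
\[
\mathcal{S}\upvarphi:=\mathtt{P}_{\cdot}\upvarphi(\cdot+\G(\cdot))-\mathtt{P}_{\cdot}\upvarphi(\cdot)\in\mathrm{L}^2(0,t_1;\mathrm{L}^2(\H,\eta;\mathrm{L}^2_\lambda(Z))).
\]

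Applying \eqref{pt1} with $t=t_1$ and dropping the nonnegative term $\|\mathtt{P}_{t_1}\upvarphi\|^2$ gives
\[
\|\mathcal{T}\upvarphi\|^2_{\mathrm{L}^2(0,t_1;\mathbb{L}^2(\H,\eta;\H))}+\|\mathcal{S}\upvarphi\|^2\leq\|\upvarphi\|^2_{\mathrm{L}^2(\H;\eta)},\quad \upvarphi\in\D(\mathcal{N}_2).
\]
Hence $\mathcal{T}$ and $\mathcal{S}$ are bounded in the $\mathrm{L}^2(\H;\eta)$-norm, not merely in the graph norm. This is the key gain over Proposition \ref{carredu}. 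Since $\D(\mathcal{N}_2)$ is dense in $\mathrm{L}^2(\H;\eta)$ (it is the generator of a $C_0$ semigroup), the bounded linear extension theorem gives unique extensions of $\mathcal{T}$ and $\mathcal{S}$ to all of $\mathrm{L}^2(\H;\eta)$. Concretely: take $\upvarphi_n\in\D(\mathcal{N}_2)$ with $\upvarphi_n\to\upvarphi$. Applying \eqref{pt1} to $\upvarphi_n-\upvarphi_m$ shows that $\mathcal{T}\upvarphi_n$ and $\mathcal{S}\upvarphi_n$ are Cauchy, and the limits are independent of the chosen sequence.

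To obtain \eqref{pt2}, fix $t\leq t_1$ and write \eqref{pt1} for $\upvarphi_n$. The left side converges term by term. First, $\|\mathtt{P}_t\upvarphi_n\|\to\|\mathtt{P}_t\upvarphi\|$ because $\mathtt{P}_t$ is a contraction. Second, the integrals over $(0,t)$ converge because restriction from $(0,t_1)$ to $(0,t)$ is continuous and the sequences converge strongly. The right side converges trivially. Since $t_1$ is arbitrary and the extensions for different $t_1$ agree on overlaps (by uniqueness), the identity holds for all $t\geq0$.

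The only delicate point is consistency of notation. The extended object $\Q^{\frac12}\mathcal{D}_{\y}\mathtt{P}_s\upvarphi$ is defined only for a.e.\ $s$, as an element of $\mathrm{L}^2$ in time, and not pointwise in $s$. Consequently \eqref{pt2} must be read with that interpretation, including the jump-difference term. One also has to check that measurability in $s$ holds on the dense class before extending, which is the step I would verify first, via strong continuity of the semigroup on $\D(\mathcal{N}_2)$.
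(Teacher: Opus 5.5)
Your proposal is correct and follows the paper's intended route. The paper omits the proof, saying it follows from Proposition \ref{ptp1} and the density of $\D(\mathcal{N}_2)$ in $\mathrm{L}^2(\H;\eta)$ via the Cauchy-sequence extension used in Proposition \ref{carredu}; you carry out exactly that, applying \eqref{pt1} to $\upvarphi_n-\upvarphi_m$ and using the fact that the bound holds in the $\mathrm{L}^2(\H;\eta)$-norm. You also add useful care on measurability in $s$ and on reading the extended objects as a.e.-in-time quantities.
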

%	\begin{proof}
%	Let $\{\upvarphi_n\}_{n\in\N}$ be a sequence in $\D(\mathcal{N}_2)$ and  $\upvarphi\in\mathrm{L}^2(\H;\eta)$ such that
%	\begin{align*}
%		\upvarphi_n\to\upvarphi \ \text{ in } \ \mathrm{L}^2(\H;\eta) \ \text{ and } \
%		\mathcal{N}_0\upvarphi_n\to\mathcal{N}_2\upvarphi_n \ \text{ in } \ \mathrm{L}^2(\H;\eta) \ \text{ as } \ n\to\infty.
%	\end{align*}
%	Proceeding in a similar way as we did in the Proposition \ref{carredu}, it follows that the sequences $\{\sqrt{\Q}\D_{\x}\mathtt{P}_s\upvarphi_n\}_{n\in\N}$ and 
%	$\{\mathtt{P}_s\upvarphi_n(\cdot+G(\cdot))-\mathtt{P}_s\upvarphi_n(\cdot)\}_{n\in\N}$ are Cauchy in 
%	$\mathrm{L}^2(0,t;\mathbb{L}^2(\H,\eta;\H))$ and $\mathrm{L}^2(0,t;\mathrm{L}^2(\H,\eta;\mathrm{L}^2_{\lambda}(Z)))$, respectively. Hence, the conclusion \eqref{pt2} follows.
%	\end{proof}
%	
	
	\section{Infinite horizon optimal control problem} \label{sec5}\setcounter{equation}{0} 
	
	This section is devoted to an application of the Kolmogorov equation associated with the SCBF system \eqref{scbfabs} to optimal control problems, with particular  emphasis on the infinite-horizon case. Further discussions and analysis of such optimal control problems can be found in  \cite[Chapters 4 and 5]{GFAS}. For broader perspectives and detailed developments in stochastic control of infinite-dimensional systems, including evolution equations and SPDEs, we refer the reader to monograph \cite{QLXZ} (also see \cite{QL1} for infinite horizon problem).
	For a given  time $t>0$, we consider the following abstract controlled SCBF equations driven with Gaussian and L\'evy noise:
		\begin{equation}\label{scbfabscon}
		\left\{
		\begin{aligned}
			\d\Y(t)&+[\mu\mathcal{A}\Y(t)+\mathfrak{B}(\Y(t))+\alpha\Y(t) +\beta\mathfrak{C}(\Y(t))+\sqrt{\Q}\mathrm{U}(t)]\d t \\&=\sqrt{\Q}\d\W(t)+\int_{Z}\G(z)\wi{\pi}
			(\d t,\d z), \ t>0,\\
			\Y(0)&=\y\in\H,
		\end{aligned}
		\right.
	\end{equation}
	where $\mathrm{U}(\cdot)$ is some control process taking values in $\B(0,\mathpzc{R}):=\left\{\boldsymbol{x}\in\H:\|\boldsymbol{x}\leq\mathpzc{R}
	\right\}\subset\H$, for $\mathpzc{R}>0$. We denote by  $$\mathrm{L}^{2,\kappa}(\Omega;\mathrm{L}^{2}(0,+\infty;\H)),$$
	the space of all progressively measurable processes 
	$\mathpzc{V}:(0,\infty)\times\Omega\to\H$ 
	such that 
	\begin{align*}
		\E\left[\int_0^{\infty}e^{-\kappa s}\|\mathpzc{V}(s)\|_{\H}^2\d s\right]<\infty,
	\end{align*}
	where $\kappa>0$ is the discount factor. We define the admissible class of control processes as
	\begin{align}\label{admis}
		\mathcal{U}_{\mathrm{ad}}:=\{\mathrm{U}(\cdot)\in
		\mathrm{L}^{2,\kappa}(\Omega;\mathrm{L}^{2}(0,+\infty;\H)):
		\|\mathrm{U}(t)\|_{\H}\leq\mathpzc{R}, \ \mathbb{P}\text{-a.s.} \},
	\end{align}
	corresponding to a fixed reference probability space $(\Omega,\mathscr{F},\mathbb{P})$. According to Theorem \ref{extunjump}, for any $\x\in\H$ and for any control 
	$\mathpzc{U}(\cdot)\in\mathcal{U}_{\mathrm{ad}}$, the  system \eqref{scbfabscon} has a unique solution in the sense of Definition \ref{defjumpe}-\ref{defjumpu}.
	In connection with the system \eqref{scbfabscon}, we study the following stochastic optimal control problem of minimizing the discounted cost functional (for the fixed discount factor $\kappa>0$) of the form
	\begin{align}\label{cost}
		\mathcal{J}_{\infty}(\y,\mathrm{U})=\E\left\{\int_0^{\infty}e^{-\kappa s} \left[f(\Y(s,\y;\mathrm{U}(\cdot)))+h(\mathrm{U}(s))\right]\d s\right\},
	\end{align}
	over all controls $\mathrm{U}(\cdot)\in\mathcal{U}_{\mathrm{ad}}$. Here $\Y(\cdot)$ is the unique solution of \eqref{scbfabscon}, $f:\H\to\R$ is some bounded measurable function, and $h:\H\to(-\infty,+\infty]$ is a convex and lower semicontinuous function. Corresponding to \eqref{admis}, we define the value function
	\begin{align}\label{56}
		\mathfrak{V}(\y):=
		\inf_{\mathrm{U}(\cdot)\in\mathcal{U}_{\mathrm{ad}}}
		\mathcal{J}_{\infty}(\y,\mathpzc{U})=
		\inf_{\mathrm{U}(\cdot)\in\mathcal{U}_{\mathrm{ad}}}\E\left\{\int_0^{\infty}
		e^{-\kappa s}\left[f(\Y(s,\y;\mathrm{U}(\cdot)))+h(\mathrm{U}(s))\right]\d s \right\}.
	\end{align}
%The value function $\mathfrak{V}$ satisfies the following dynamic programming principle whose proof is given in the Appendix \ref{dppinfinite}.
%\begin{proposition}
%	For every $t>0$, $\x\in\H$, the value function \eqref{56} satisfies the following:
%	\begin{align*}
%		\mathfrak{V}(\x)=\inf_{\mathpzc{U}(\cdot)\in\mathcal{U}_{\mathrm{ad}}}
%		\E\left\{\int_0^{t}
%		e^{-\lambda s}\left[f(\X(s,\x;\mathpzc{U}(\cdot)))+h(\mathpzc{U}(s))\right]
%		\d s+
%		e^{-\lambda t}\mathfrak{V}(\X(t)) \right\}.
%	\end{align*}
%\end{proposition}
Adopting the dynamic programming approach, we seek a solution (whose precise definition is given below) of the following  infinite-dimensional second order stationary HJB equation associated with the stochastic optimal control problem \eqref{scbfabscon}-\eqref{cost}:
\begin{align}\label{hjbn0}
	&\kappa\upvarphi(\y)-\frac{1}{2}\Tr\left[\Q\mathcal{D}_{\y}^2\upvarphi(\y)\right]+ (\mu\A\y+\alpha\y+\B(\y)+\beta\mathfrak{C}(\y),\mathcal{D}_{\y}\upvarphi(\y))
	\nonumber\\&\quad-
	\int_{Z}\left[\uppsi(\y+\G(z))-\uppsi(\y)-(\G(z),\mathcal{D}_{\y}\uppsi(\y))\right]\lambda(\d z) +g(\Q^{1/2}\mathcal{D}_{\y}\upvarphi(\y))=f(\y), \ \text{ for } \ \y\in\H,
\end{align}
where the Hamiltonian $g:\H\to\R$ is the Legendre transform of $h$, which is defined by
\begin{align}\label{leg}
	g(\y)=\sup\limits_{\x\in\H}\left\{(\y,\x)-h(\x)\right\},\ \y\in\H.
\end{align}

\subsection{Kolmogorov formulation for the HJB equation}
In this section, we apply the Kolmogorov framework developed in Sections \ref{secvarmea}-\ref{disscore} to analyse the stationary HJB equation \eqref{hjbn0} associated with the infinite-horizon optimal control problem \eqref{scbfabscon}-\eqref{cost}. The objective is to characterize the value function  through an appropriate solution concept compatible with the HJB equation \eqref{hjbn0}. We write the equation \eqref{hjbn0} in the folllwing abstract form 
\begin{align}\label{551}
\kappa\upvarphi-\mathcal{N}_0\upvarphi+g(\Q^{1/2}\mathcal{D}_{\y}\upvarphi)=f,
\end{align}
where $\mathcal{N}_0$ is the Kolmogorov differential operator defined in \eqref{4p5} and the nonlinearity $g$ coincides with the Hamiltonian. Therefore, solving the Kolmogorov equation \eqref{551} is equivalent to solving the HJB equation \eqref{hjbn0}.  

Note that the equation \eqref{551} should be viewed only as formal identity. Indeed, $\mathcal{N}_0$ is not closed in $\mathrm{L}^2(\H,\eta)$ and thus \eqref{551} is not defined as a well-posed problem in $\mathrm{L}^2(\H,\eta)$. The correct framework arises from the fact that $\mathcal{N}_0$ is closable in $\mathrm{L}^2(\H,\eta)$ and its closure coincides with $\mathcal{N}_2$, the infinitesimal generator of the transition semigroup $\{\mathtt{P}_t\}_{t\geq 0}$ (see Theorem \ref{thm4.7}). Therefore, the HJB
equation \eqref{551} must be understood in the following sense:
\begin{align}\label{555}
\kappa\upvarphi-\mathcal{N}_2\upvarphi+g(\Q^{1/2}\mathcal{D}_{\y}\upvarphi)=f \ \text{ in } \ 
\mathrm{L}^2(\H,\eta).
\end{align}
In the above form, the equation is meaningful, because $\mathcal{N}_2$ is a closed operator in 
$\mathrm{L}^2(\H,\eta)$ and its resolvent $(\kappa\I-\mathcal{N}_2)^{-1}:\mathrm{L}^2(\H,\eta)\to\D(\mathcal{N}_2)
\subset\mathrm{L}^2(\H,\eta)$ is well-defined. Consequently, one can apply the resolvent estimates (see Proposition \ref{lem4.10}) and the Carre du Champ bounds (see Proposition \ref{carredu}) within the natural Hilbert space $\mathrm{L}^2(\H,\eta)$. By using the associated resolvent operator $(\kappa\I-\mathcal{N}_2)^{-1}:\mathrm{L}^2(\H,\eta)\to\D(\mathcal{N}_2)
\subset\mathrm{L}^2(\H,\eta)$, defined by
\begin{align*}
(\kappa\I-\mathcal{N}_2)^{-1}\phi=
\int_0^{\infty} e^{-\kappa t} \mathtt{P}_t \phi \,\d t, \ \phi\in \mathrm{L}^2(\H,\eta),
\end{align*}
the equation \eqref{555} can be written in the following form: 
\begin{align}\label{559}
\upvarphi=(\kappa\I-\mathcal{N}_2)^{-1}(f-g(\Q^{1/2}\mathcal{D}_{\y}\upvarphi))
=\int_0^{\infty} e^{-\kappa t} \mathtt{P}_t\big[f-g(\Q^{\frac12}\mathcal{D}_{\y}\upvarphi)\big](\x)\,\d t.
\end{align}
		
\begin{definition}[Mild solution]\label{genmild}
Let $f\in\mathrm{L}^2(\H,\eta)$ and $g\in\mathrm{Lip}_b(\H)$. We say that $\upvarphi\in\mathrm{L}^2(\H,\eta)$ such that $\Q^{\frac12}\mathcal{D}_{\y}\upvarphi\in\L^2(\H,\eta;\H)$ is a mild solution of \eqref{551} if and only if $\upvarphi$ satisfies the following integral equation for all $\x\in\H$:
\begin{align}\label{fxdp}
\upvarphi(\x)=\int_0^{\infty} e^{-\kappa t} \mathtt{P}_t\big[f-g(\Q^{\frac12}\mathcal{D}_{\y}\upvarphi)\big](\x)\,\d t.
\end{align}
\end{definition}
\subsubsection{Solvability of the HJB equation} 
Having identified the appropriate notion of solution for the HJB equation \eqref{hjbn0}, we now address its solvability. As discussed above, the strict formulation is too restrictive in the present infinite-dimensional setting due to the lack of the sufficient regularity of the value function. We therefore focus on the existence of  mild solutions introduced above.

\begin{theorem}\label{HJBeqn1}
Assume that the Hamiltonian $g:\H\to\R$ is Lipschitz continuous and $f\in\mathrm{L}^2(\H;\eta)$. Then, for 
$\frac{1}{\kappa}+\frac{1}{\sqrt{\kappa}}<\frac{1}{\|g\|_{\mathrm{Lip}}}$, the HJB equation \eqref{hjbn0} has a unique mild solution $\upvarphi$ in the sense of Definition \ref{genmild}.
\end{theorem}

\begin{proof}
We aim to now show that the HJB equation \eqref{hjbn0} has a unique mild solution $\upvarphi\in\mathrm{L}^2(\H,\eta)$ which is given by the resolvent formula
\begin{align*}
\upvarphi=(\kappa\I-\mathcal{N}_2)^{-1}(f-g(\Q^{\frac12}\mathcal{D}_{\y}\upvarphi)).
\end{align*}
Let us first define the space 
\begin{align*}
\mathbb{X}:=\mathrm{L}^2(\H,\eta)\times\L^2(\H,\eta;\H).
\end{align*}
We equip the space $\mathbb{X}$ with respect to the following norm:
\begin{align*}
\|(\upvarphi,\boldsymbol{\uppsi})\|_{\mathbb{X}}:=\|\upvarphi\|_{\mathrm{L}^2(\H,\eta)}+\|\boldsymbol{\uppsi}\|_{\L^2(\H,\eta;\H)},
\end{align*}
and the space $\mathbb{X}$ is complete with respect to the above norm. We now introduce a map which allows us to use a fixed point argument. Define the map $\mathfrak{G}:\mathbb{X}\to\mathbb{X}$ by
\begin{align*}
\mathfrak{G}(\upvarphi,\boldsymbol{\uppsi}):=\big((\kappa\I-\mathcal{N}_2)^{-1}(g(\boldsymbol{\uppsi})+f),
\Q^{\frac12}\mathcal{D}_{\y}(\kappa\I-\mathcal{N}_2)^{-1}(g(\boldsymbol{\uppsi})+f)\big),
\end{align*}
where $g:\H\to\R$ is a Lipschitz continuous function and $f\in\mathrm{L}^2(\H,\eta)$. 
\vskip 0.2cm
\noindent
\emph{The map $\mathfrak{G}$ is well-defined:} Let $(\upvarphi,\boldsymbol{\uppsi})\in\mathbb{X}$. Then, since $\boldsymbol{\uppsi}\in\L^2(\H,\eta;\H)$ and $g$ is Lipschitz continuos, we immediately have $g(\boldsymbol{\uppsi})\in\mathrm{L}^2(\H,\eta)$.
Moreover, from \eqref{frd4} and the Lipschitz continuity of $g$ yields:
\begin{align}\label{Gwedef}
\|\mathfrak{G}(\upvarphi,\boldsymbol{\uppsi})\|_{\mathbb{X}}&=
\|\big((\kappa\I-\mathcal{N}_2)^{-1}(g(\boldsymbol{\uppsi})+f),
\Q^{\frac12}\mathcal{D}_{\y}(\kappa\I-\mathcal{N}_2)^{-1}(g(\boldsymbol{\uppsi})+f)\big)\|_{\mathbb{X}}
\nonumber\\&=
\|(\kappa\I-\mathcal{N}_2)^{-1}(g(\boldsymbol{\uppsi})+f)\|_{\mathrm{L}^2(\H,\eta)}
+\|\Q^{\frac12}\mathcal{D}_{\y}(\kappa\I-\mathcal{N}_2)^{-1}(g(\boldsymbol{\uppsi})+f)\|_{\L^2(\H,\eta;\H)}
\nonumber\\&\leq
\|(\kappa\I-\mathcal{N}_2)^{-1}g(\boldsymbol{\uppsi})\|_{\mathrm{L}^2(\H,\eta)}+
\|(\kappa\I-\mathcal{N}_2)^{-1}f\|_{\mathrm{L}^2(\H,\eta)}
\nonumber\\&\quad+
\|\Q^{\frac12}\mathcal{D}_{\y}(\kappa\I-\mathcal{N}_2)^{-1}g(\boldsymbol{\uppsi})\|_{\L^2(\H,\eta;\H)}+
\|\Q^{\frac12}\mathcal{D}_{\y}(\kappa\I-\mathcal{N}_2)^{-1}f\|_{\L^2(\H,\eta;\H)}
\nonumber\\&\leq
\frac{1}{\kappa}\|g(\boldsymbol{\uppsi})\|_{\mathrm{L}^2(\H,\eta)}+
\frac{1}{\kappa}\|f\|_{\mathrm{L}^2(\H,\eta)}
+
\frac{1}{\sqrt{\kappa}}\|g(\boldsymbol{\uppsi})\|_{\mathrm{L}^2(\H,\eta)}+
\frac{1}{\sqrt{\kappa}}\|f\|_{\mathrm{L}^2(\H,\eta)},
\end{align}
which in turn implies that $\mathfrak{G}(\upvarphi,\boldsymbol{\uppsi})\in\mathbb{X}$ and thus, the map 
$\mathfrak{G}:\mathbb{X}\to\mathbb{X}$ is well-defined.
\vskip 0.2cm
\noindent
\emph{The map $\mathfrak{G}$ is a contraction.} Let $(\upvarphi_1,\boldsymbol{\uppsi}_1),(\upvarphi_2,\boldsymbol{\uppsi}_2)\in\mathbb{X}$. Similar to \eqref{Gwedef}, we compute
\begin{align*}
&\|\mathfrak{G}(\upvarphi_1,\boldsymbol{\uppsi}_1)-\mathfrak{G}(\upvarphi_2,\boldsymbol{\uppsi}_2)\|_{\mathbb{X}}
\nonumber\\&\leq
\|(\kappa\I-\mathcal{N}_2)^{-1}\big((g(\boldsymbol{\uppsi}_1)+f)-(g(\boldsymbol{\uppsi}_2)+f)
\big)\|_{\mathrm{L}^2(\H,\eta)}
\nonumber\\&\quad+
\|\Q^{\frac12}\mathcal{D}_{\y}(\kappa\I-\mathcal{N}_2)^{-1}
\big((g(\boldsymbol{\uppsi}_1)+f)-(g(\boldsymbol{\uppsi}_2)+f)\big)\|_{\L^2(\H,\eta;\H)}
\nonumber\\&\leq
\|(\kappa\I-\mathcal{N}_2)^{-1}
\underbrace{
(g(\boldsymbol{\uppsi}_1)-g(\boldsymbol{\uppsi}_2))}_{\in\mathrm{L}^2(\H,\eta)}\|_{\mathrm{L}^2(\H,\eta)}
+
\|\Q^{\frac12}\mathcal{D}_{\y}(\kappa\I-\mathcal{N}_2)^{-1}\underbrace{
(g(\boldsymbol{\uppsi}_1)-g(\boldsymbol{\uppsi}_2))}_{\in\mathrm{L}^2(\H,\eta)}\|_{\L^2(\H,\eta;\H)}
\nonumber\\&\leq
\frac{1}{\kappa}\|g(\boldsymbol{\uppsi}_1)-g(\boldsymbol{\uppsi}_2)\|_{\mathrm{L}^2(\H,\eta)}+
\frac{1}{\sqrt{\kappa}}\|g(\boldsymbol{\uppsi}_1)-g(\boldsymbol{\uppsi}_2)\|_{\mathrm{L}^2(\H,\eta)}
\nonumber\\&\leq
\|g\|_{\mathrm{Lip}}\left(\frac{1}{\kappa}+\frac{1}{\sqrt{\kappa}}\right)
\|\boldsymbol{\uppsi}_1-\boldsymbol{\uppsi}_2\|_{\L^2(\H,\eta;\H)}
\nonumber\\&\leq
\|g\|_{\mathrm{Lip}}\left(\frac{1}{\kappa}+\frac{1}{\sqrt{\kappa}}\right)
\|(\upvarphi_1,\boldsymbol{\uppsi}_1)-(\upvarphi_2,\boldsymbol{\uppsi}_2)\|_{\mathbb{X}}.
\end{align*}
Thus, for $\|g\|_{\mathrm{Lip}}\left(\frac{1}{\kappa}+\frac{1}{\sqrt{\kappa}}\right)<1$, the map $\mathscr{G}$ is a contraction map. Thus, the Banach fixed point theorem gives a unique fixed point $(\upvarphi,\boldsymbol{\uppsi})\in\mathbb{X}$ such that $\mathscr{G}(\upvarphi,\boldsymbol{\uppsi})=(\upvarphi,\boldsymbol{\uppsi})$. Consequently, we have 
\begin{align*}
\upvarphi=(\kappa\I-\mathcal{N}_2)^{-1}(g(\boldsymbol{\uppsi})+f) \ \text{ and } \ 
\boldsymbol{\uppsi}=\Q^{\frac12}\mathcal{D}_{\y}(\kappa\I-\mathcal{N}_2)^{-1}(g(\boldsymbol{\uppsi})+f).
\end{align*}
Thus, we derive 
\begin{align*}
\upvarphi=(\kappa\I-\mathcal{N}_2)^{-1}(g(\Q^{\frac12}\mathcal{D}_{\y}\upvarphi)+f),
\end{align*}
which completes the proof.
\end{proof}

\begin{appendix}\renewcommand{\thesection}{\Alph{section}}
	\numberwithin{equation}{section}
	\section{Existence of an optimal control}\label{existopt}
For the existence of an optimal control, we take our hamiltonian as given below. Consider the function (see \cite[pp. 294]{gdp7})
\begin{align}\label{epe}
h(\x)=
\left\{\begin{array}{cc}\frac{1}{2}\|\x\|_{\H}^2, &\text{ if } \|\x\|_{\H}\leq \mathpzc{R},\\
+\infty,  &\text{ if } \|\x\|_{\H}>\mathpzc{R}.
\end{array}\right.
\end{align}
Then, the hamiltonian $g(\cdot)$ is explicitly given by 
\begin{align}\label{how1}
g(\x)=
\left\{\begin{array}{cc}\frac{1}{2}\|\x\|_{\H}^2, &\text{ if } \|\x\|_{\H}\leq \mathpzc{R},\\
\mathpzc{R}\|\x\|_{\H}-\frac{\mathpzc{R}^2}{2}, &\text{ if } \|\x\|_{\H}>\mathpzc{R}.
\end{array}
\right.
\end{align}
%		Moreover, the optimal feedback control is given formally as  (see \cite{gozzi})
%		\begin{align*}
%			\wi{\mathrm{U}}(t)=-\mathscr{G}(\Q^{1/2}\mathcal{D}_{\y}\upvarphi(\x)),
%		\end{align*}
%		where $\mathscr{G}(\cdot)$ is given by
%		\begin{align}\label{how2}
%			\mathscr{G}(\x):=
%			\begin{cases}
%				\x,  &\text{ when } \|\x\|_{\H}\leq R,\\
%				\frac{R}{\|\x\|_{\H}}\x, &\text{ when } \|\x\|_{\H}>R.
%			\end{cases}
%		\end{align} 

%	Let us fix the Hamiltonian  (see example \eqref{epe})
%	\begin{align*}
%		g(\x)=\left\{\begin{array}{cc}\frac{1}{2}\|\x\|_{\H}^2, &\text{ if } \ \|\x\|_{\H}\leq R,\\
%			R\|\x\|_{\H}-\frac{R^2}{2}, &\text{ if }\  \|\x\|_{\H}>R.\end{array}\right.
%	\end{align*}
We need the following lemma in the sequel.
\begin{lemma}\label{verif}
Let $\mathrm{U}(\cdot)\in\mathcal{U}_{\mathrm{ad}}$ and $\upvarphi(\cdot)$ be the mild solution to \eqref{hjbn0}. Then, the following identity holds:
\small{ \begin{align}\label{713}
& \upvarphi(\x)+\E\left(\int_0^{\infty}\frac{e^{-\kappa t}}{2} \left[\|\mathrm{U}(t)+\Q^{1/2}\mathcal{D}_{\y}\upvarphi(\Y(t,\x))\|_{\H}^2 -\uppsi(\|\Q^{1/2}\mathcal{D}_{\y}\upvarphi(\Y(t,\x))\|_{\H}-\mathpzc{R})\right] \d t\right)\nonumber\\&\quad=\mathcal{J}_{\infty}(\mathrm{U}),
\end{align}}
where 
\begin{align*}
\uppsi(\upxi):=
\begin{cases}
\upxi^2, \ &\text{ if } \ \upxi>0,\\
0, \ &\text{ if } \ \upxi\leq0.
\end{cases}
\end{align*}
\end{lemma}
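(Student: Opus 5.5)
The plan is a verification argument: apply Itô's formula to $e^{-\kappa t}\upvarphi(\Y(t,\x))$ along the controlled dynamics \eqref{scbfabscon}, use the HJB equation \eqref{hjbn0} to eliminate the Kolmogorov part, and then complete the square in the control. Since $\upvarphi$ is only a mild solution in $\mathrm{L}^2(\H,\eta)$, the computation cannot be done on $\upvarphi$ directly. I would first carry it out for a regularized function and then pass to the limit.

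\textbf{Approximation.} By Theorem \ref{thm4.7} and Proposition \ref{carredu}, pick $\upvarphi_n\in\mathscr{E}_{\A}(\H)$ with
\[
\upvarphi_n\to\upvarphi,\qquad \mathcal{N}_0\upvarphi_n\to\mathcal{N}_2\upvarphi \quad\text{in } \mathrm{L}^2(\H,\eta),
\]
and hence $\Q^{1/2}\mathcal{D}_{\y}\upvarphi_n\to\Q^{1/2}\mathcal{D}_{\y}\upvarphi$ in $\L^2(\H,\eta;\H)$ by \eqref{dn2d1}. Set
\[
f_n:=\kappa\upvarphi_n-\mathcal{N}_0\upvarphi_n+g(\Q^{1/2}\mathcal{D}_{\y}\upvarphi_n).
\]
Then $f_n\to f$ in $\mathrm{L}^2(\H,\eta)$, using \eqref{555} and the Lipschitz continuity of $g$.

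\textbf{Itô step.} Apply the Itô formula \cite[Theorem 3.7.2]{VMBR} to $e^{-\kappa t}\upvarphi_n(\Y(t))$, exactly as in \eqref{450}. The controlled drift contributes the extra term $-(\sqrt{\Q}\mathrm{U}(t),\mathcal{D}_{\y}\upvarphi_n)$. Taking expectations removes the martingale terms, and letting the time horizon go to infinity (using boundedness of $\upvarphi_n$ and $e^{-\kappa T}\to0$) gives
\[
\upvarphi_n(\x)=\E\int_0^\infty e^{-\kappa t}\Big[f_n(\Y)-g\big(\Q^{1/2}\mathcal{D}_{\y}\upvarphi_n(\Y)\big)+\big(\mathrm{U},\Q^{1/2}\mathcal{D}_{\y}\upvarphi_n(\Y)\big)\Big]\d t .
\]
Adding and subtracting $h(\mathrm{U})$, the integrand becomes $f_n+h(\mathrm{U})$ minus the quantity
\[
g(p)+h(u)-(u,p),\qquad p=\Q^{1/2}\mathcal{D}_{\y}\upvarphi_n(\Y),\ u=\mathrm{U}.
\]

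\textbf{Algebraic identity.} For the explicit pair \eqref{epe}--\eqref{how1} and $\|u\|_{\H}\leq\mathpzc{R}$, I will check that
\[
g(p)+h(u)-(u,p)=\tfrac12\|u+p\|_{\H}^2-\tfrac12\uppsi\big(\|p\|_{\H}-\mathpzc{R}\big).
\]
When $\|p\|_{\H}\leq\mathpzc{R}$ this is immediate. When $\|p\|_{\H}>\mathpzc{R}$ it follows from $\mathpzc{R}\|p\|-\frac{\mathpzc{R}^2}{2}=\frac12\|p\|^2-\frac12(\|p\|-\mathpzc{R})^2$. The sign is fixed so that the identity is consistent with the sign convention of \eqref{hjbn0} and of the controlled drift; this yields \eqref{713} for $\upvarphi_n$, with $f$ replaced by $f_n$.

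\textbf{Main obstacle: passage to the limit.} The convergences above hold only in $\mathrm{L}^2(\H,\eta)$, while the expectations are taken along the controlled trajectory started at $\x$, whose law is not $\eta$. My first attempt would be to show that the laws of $\Y(t,\x;\mathrm{U})$ integrated against $e^{-\kappa t}\d t$ are controlled by $\eta$. A Girsanov change of measure removes the bounded control term $\sqrt{\Q}\mathrm{U}$, since $\|\mathrm{U}\|_{\H}\leq\mathpzc{R}$ makes Novikov's condition hold. This reduces matters to the uncontrolled semigroup, so that $\E\int_0^\infty e^{-\kappa t}|f_n-f|(\Y)\,\d t\to0$ and the analogous statements hold for the quadratic terms. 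The convergence is obtained for $\eta$-a.e.\ $\x$, after passing to a subsequence and using the invariance of $\eta$ together with Fubini.

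If that argument is too weak, I would instead interpret \eqref{713} in the $\eta$-a.e.\ sense. The quadratic terms $\|\mathrm{U}+p_n\|^2$ converge in $\mathrm{L}^1$ because $p_n\to p$ in $\L^2$, and $\uppsi$ is Lipschitz on bounded sets with quadratic growth, so its terms converge as well.
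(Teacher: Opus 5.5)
Your overall scheme matches the paper's: It\^o's formula for $e^{-\kappa t}\upvarphi(\Y(t,\x))$, removal of the Kolmogorov part via \eqref{hjbn0}, then completing the square with the pair \eqref{epe}--\eqref{how1}. However, the algebraic step is wrong as written. You correctly read off the It\^o term $-(\sqrt{\Q}\mathrm{U},\mathcal{D}_{\y}\upvarphi_n)$ from \eqref{scbfabscon}. This gives $\upvarphi_n(\x)=\E\int_0^\infty e^{-\kappa t}[f_n-g(p)+(\mathrm{U},p)]\,\d t$, so the quantity to rewrite is $g(p)+h(u)-(u,p)$. Your claimed identity for it is false: if $\|p\|_{\H}\leq\mathpzc{R}$ and $u=p\neq\boldsymbol{0}$, the left side is $0$ and the right side is $2\|p\|_{\H}^2$. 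The correct identities, for $\|u\|_{\H}\leq\mathpzc{R}$, are
\begin{align*}
g(p)+h(u)\mp(u,p)=\tfrac12\|u\mp p\|_{\H}^2-\tfrac12\uppsi\big(\|p\|_{\H}-\mathpzc{R}\big).
\end{align*}
So your computation actually proves \eqref{713} with $\|\mathrm{U}(t)-\Q^{1/2}\mathcal{D}_{\y}\upvarphi(\Y(t,\x))\|_{\H}^2$, and the sentence ``the sign is fixed so that\dots'' hides this. The paper gets the plus sign because in \eqref{stpr} it writes the control contribution as $+(\sqrt{\Q}\mathrm{U}(s),\mathcal{D}_{\y}\upvarphi(\Y(s)))$. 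That is, it lets the control enter the drift with a plus sign, contrary to how \eqref{scbfabscon} is typeset. Under that convention one gets $\upvarphi(\x)=\E\int_0^\infty e^{-\kappa t}[f-g(p)-(\mathrm{U},p)]\,\d t$, and the lower-sign identity is exactly what its Cases 1 and 2 compute. You need to fix one convention at the outset and state \eqref{713} accordingly.

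Apart from this, your route is more careful than the paper's. The paper simply assumes $\upvarphi\in\C_b^2(\H)$, deferring to Galerkin approximations, and treats $\|\Q^{1/2}\mathcal{D}_{\y}\upvarphi\|_{\H}\leq\mathpzc{R}$ and $>\mathpzc{R}$ as separate cases. You approximate through the core $\mathscr{E}_{\A}(\H)$ and transfer $\mathrm{L}^2(\H,\eta)$-convergence to the controlled laws by Girsanov and invariance. This confronts the fact that $\upvarphi$ and $\Q^{1/2}\mathcal{D}_{\y}\upvarphi$ are only $\eta$-classes, at the price of getting \eqref{713} only for $\eta$-a.e.\ $\x$. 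Two details must be made explicit.

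\emph{The discount must beat the density.} Cauchy--Schwarz against the Girsanov density gives $\E|\Phi(\Y(t,\x))|\leq e^{\mathpzc{R}^2t/2}\big(\mathtt{P}_t(\Phi^2)(\x)\big)^{1/2}$. Hence $\int_{\H}\E\int_0^\infty e^{-\kappa t}|\Phi(\Y(t,\x))|\,\d t\,\eta(\d\x)\leq(\kappa-\mathpzc{R}^2/2)^{-1}\|\Phi\|_{\mathrm{L}^2(\H,\eta)}$, which needs $\kappa>\mathpzc{R}^2/2$. This does hold for the mild solution of Theorem \ref{HJBeqn1}, since $\|g\|_{\mathrm{Lip}}=\mathpzc{R}$ there forces $\kappa>\mathpzc{R}^2$.

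\emph{Pass to the limit before completing the square.} The bound above needs $\Phi\in\mathrm{L}^2(\H,\eta)$. So take the limit in $f_n-g(p_n)\pm(\mathrm{U},p_n)$, whose error is at most $|f_n-f|+2\mathpzc{R}\|p_n-p\|_{\H}$, and complete the square only afterwards, pointwise. The separate quadratic terms $\|\mathrm{U}+p_n\|_{\H}^2$ and $\uppsi(\|p_n\|_{\H}-\mathpzc{R})$ converge only in $\mathrm{L}^1(\H,\eta)$, which this estimate cannot use. Your fallback paragraph relies on exactly that $\mathrm{L}^1$ convergence, with no transfer to the law of $\Y(t,\x)$, so it does not close the argument.
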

\begin{proof}
Without loss of generality, we assume that $\upvarphi\in\C_b^2(\H)$ (otherwise one can proceed via Galerkin approximations, \cite[Chapter 13]{gdp7}). Since, $\Y(\cdot)$ is the unique strong solution of \eqref{scbfabscon},on applying the It\^o formula to the $t\mapsto e^{-\kappa t}\upvarphi(\Y(t,\x))$ yields, $\mathbb{P}-$a.s. 
\begin{align}\label{stpr}
&e^{-\kappa t}\upvarphi(\Y(t,\x))
\nonumber\\&=\upvarphi(\x)-\kappa
\int_0^t e^{-\kappa s}\upvarphi(\Y(s,\x))\d s
\nonumber\\&\quad-
\int_0^t e^{-\kappa s}
\big(\mu\mathcal{A}\Y(s)+\alpha\Y(s)+\mathfrak{B}(\Y(s))+\beta\mathfrak{C}(\Y(s)),
\mathcal{D}_{\y}\upvarphi(\Y(s))\big)\d s
\nonumber\\&\quad+
\int_0^t e^{-\kappa s}\big(\sqrt{\Q}\mathrm{U}(s),\mathcal{D}_{\y}\upvarphi(\Y(s))\big)+
\frac{1}{2} \int_0^t e^{-\kappa s}\Tr(\Q\D^2_{\x}\upvarphi(\Y(s)))\d s
\nonumber\\&\quad+
\int_0^t e^{-\kappa s}\big(\sqrt{\Q}\d\W(s),\mathcal{D}_{\y}\upvarphi(\Y(s))\big)+
\int_0^t\int_{Z}e^{-\kappa s}\big[\upvarphi(\Y(s-)+\G(z))-\upvarphi(\Y(s-))\big]
\wi{\pi}(\d s,\d z)
\nonumber\\&\quad+
\int_0^t\int_{Z}e^{-\kappa s}\big[\upvarphi(\Y(s-)+\G(z))-\upvarphi(\Y(s-))-
(\mathcal{D}_{\y}\upvarphi(\Y(s-)),\G(z))\big]\lambda(\d z)\d s,
\end{align}
for all $t\in[0,T]$. 
By utilizing \eqref{hjbn0} in \eqref{stpr}, it reduces to $\mathbb{P}-$a.s.
\begin{align}\label{gvaca}
&e^{-\kappa t}\upvarphi(\Y(t,\x))-\upvarphi(\x)
\nonumber\\&=
\int_0^t e^{-\kappa s}\big(\mathrm{U}(s),\sqrt{\Q}\mathcal{D}_{\y}\upvarphi(\Y(s))\big)\d s+
\underbrace{
\int_0^t e^{-\kappa s}\big(\sqrt{\Q}\d\W(s), \mathcal{D}_{\y}\upvarphi(\Y(s))\big)}_{\mathscr{F}_t-\text{ adapted martingale }}
\nonumber\\&\quad+
\underbrace{
	\int_0^t\int_{Z}e^{-\kappa s}\big[\upvarphi(\Y(s-)+\G(z))-\upvarphi(\Y(s-))\big]
	\wi{\pi}(\d s,\d z)}_{\mathscr{F}_t-\text{ adapted martingale }}
\nonumber\\&\quad+
\int_0^t e^{-\kappa s}g(\Q^{1/2}\mathcal{D}_{\y}\upvarphi(\Y(s,\x)))\d s
-\int_0^t e^{-\kappa s} f(\Y(s,\x))\d s,
\end{align} 
for all $t\in[0,T]$. On taking the expectation, and then using the fact that martingale terms in \eqref{gvaca} having zero expectation, we arrive at
\begin{align}\label{gvaca1}
&\E\big[e^{-\kappa t}\upvarphi(\Y(t,\x))\big]
\nonumber\\&=\upvarphi(\x)+\E\bigg[
\int_0^t e^{-\kappa s}\big(\mathrm{U}(s),\sqrt{\Q}\mathcal{D}_{\y}\upvarphi(\Y(s))\big)\d s \bigg]
\nonumber\\&\quad+
\E\bigg[\int_0^t e^{-\kappa s}g(\Q^{1/2}\mathcal{D}_{\y}\upvarphi(\Y(s,\x)))\d s\bigg]
-\E\bigg[\int_0^t e^{-\kappa s} f(\Y(s,\x))\d s\bigg],
\end{align} 
for all $t\in[0,T]$. We now consider the following two cases:
\vskip 0.2cm
\noindent
\textbf{Case-1:} \emph{When $\|\Q^{1/2}\mathcal{D}_{\y}\upvarphi(\Y(\cdot))\|_{\H}\leq \mathpzc{R}$.} In this case, we have 
\begin{align}\label{gvaca3}
g(\Q^{1/2}\mathcal{D}_{\y}\upvarphi(\Y(\cdot)))
&=\frac12\|\Q^{1/2}\mathcal{D}_{\y}\upvarphi(\Y(\cdot))\|_{\H}^2
\nonumber\\&=
\frac12\|\mathrm{U}(\cdot)+\Q^{1/2}\mathcal{D}_{\y}\upvarphi(\Y(\cdot))\|_{\H}^2-
\frac12\|\mathrm{U}(\cdot)\|_{\H}^2-(\mathrm{U}(\cdot),
\Q^{1/2}\mathcal{D}_{\y}\upvarphi(\Y(\cdot))).
\end{align}
On substituting \eqref{gvaca3} into \eqref{gvaca1}, we obtain
\begin{align}\label{gvaca2}
&\E\big[e^{-\kappa t}\upvarphi(\Y(t,\x))\big]
\nonumber\\&=\upvarphi(\x)
-\frac12\E\bigg[
\int_0^t e^{-\kappa s}\|\mathrm{U}(s)\|_{\H}^2\d s\bigg]
\nonumber\\&\quad+
\frac12\E\bigg[\int_0^t e^{-\kappa s}
\|\mathrm{U}(s)+\Q^{1/2}\mathcal{D}_{\y}\upvarphi(\Y(s,\x))\|_{\H}^2\d s\bigg]
-\E\bigg[\int_0^t e^{-\kappa s} f(\Y(s,\x))\d s\bigg],
\end{align} 
for all $t\in[0,T]$. On taking the limit supremum as $t\to\infty$ and utilizing \eqref{cost}, the above equality reduces to 
\begin{align}\label{cost1}
\upvarphi(\x)&=\E\bigg( \int_0^{\infty} e^{-\kappa t}\left[ f(\Y(t,\x))+\frac{1}{2} \|\mathrm{U}(t)\|_{\H}^2\right]\d t-\int_0^{\infty} \frac{e^{-\kappa t}}{2}\|\mathrm{U}(t)+\Q^{1/2} \mathcal{D}_{\y}\upvarphi(\Y(t,\x))\|_{\H}^2\d t \bigg)\no\\&=\mathcal{J}_{\infty}(\mathrm{U})-
\E\left(\int_0^{\infty}\frac{e^{-\kappa t}}{2} \|\mathrm{U}(t)+\Q^{1/2}\mathcal{D}_{\y}\upvarphi(\Y(t,\x))\|_{\H}^2 \d t\right).
\end{align}
\vskip 0.2cm
\noindent
\textbf{Case-2:} \emph{When $\|\Q^{1/2}\mathcal{D}_{\y}\upvarphi(\Y(\cdot))\|_{\H}> \mathpzc{R}$.} 
In this case, we have 
\begin{align}\label{gvaca4}
g(\Q^{1/2}\mathcal{D}_{\y}\upvarphi(\Y(\cdot)))
&=\mathpzc{R}\|\Q^{1/2}\mathcal{D}_{\y}\upvarphi(\Y(\cdot))\|_{\H}-\frac{\mathpzc{R}^2}{2}
\nonumber\\&=
-\frac12(\|\Q^{1/2}\mathcal{D}_{\y}\upvarphi(\Y(\cdot))\|_{\H}-\mathpzc{R})^2+
\frac12\|\Q^{1/2}\mathcal{D}_{\y}\upvarphi(\Y(\cdot))\|_{\H}^2.
\end{align}
On substituting \eqref{gvaca4} into \eqref{gvaca1}, we obtain
\begin{align}\label{gvaca5}
&\E\big[e^{-\kappa t}\upvarphi(\Y(t,\x))\big]
\nonumber\\&=\upvarphi(\x)+
\frac12\E\bigg[\int_0^t e^{-\kappa s}
\|\mathrm{U}(s)+\Q^{1/2}\mathcal{D}_{\y}\upvarphi(\Y(s,\x))\|_{\H}^2\d s\bigg]
-\frac12\E\bigg[\int_0^t e^{-\kappa s}\|\mathrm{U}(s)\|_{\H}^2\d s\bigg]
\nonumber\\&\quad-
\frac12\E\bigg[\int_0^t e^{-\kappa s}
(\|\Q^{1/2}\mathcal{D}_{\y}\upvarphi(\Y(s,\x))\|_{\H}-\mathpzc{R})^2\d s\bigg]
-\E\bigg[\int_0^t e^{-\kappa s} f(\Y(s,\x))\d s\bigg],
\end{align} 
for all $t\in[0,T]$. Consequently, the argument as before, we arrive at the representation
\begin{align}\label{cost2}
\upvarphi(\y)=\mathcal{J}_{\infty}(\mathrm{U})-\E\left(\int_0^{\infty}\frac{e^{-\kappa t}}{2}\left[\|\mathrm{U}(t)+ \Q^{1/2}\mathcal{D}_{\y}\upvarphi(\Y(t,\y))\|_{\H}^2 -(\|\Q^{1/2}\mathcal{D}_{\y}\upvarphi(\Y(t,\y))\|_{\H}-\mathpzc{R})^2\right] \d t\right).
\end{align}
Combining \eqref{cost1} and \eqref{cost2}, we obtain \eqref{713}, which completes the proof.
\end{proof}  
The existence of an optimal control for problem \eqref{56} follows from Lemma \ref{verif}, using arguments analogous to those in \cite[Section 13.4.2]{gdp7} (see \cite{gozzi1,gozzi2} also). This summarized in the following result:
\begin{theorem}
There exists an optimal pair $(\mathrm{U}^*(\cdot),\Y^*(\cdot))$ for problem \eqref{56} such that the optimal feedback law is given by
\begin{equation*}
\mathrm{U}^*(t)=\mathcal{G}(\Q^{1/2}\mathcal{D}_{\y}\upvarphi(\Y^*(t,\y))), \ \text{ for all } \ t\geq 0 \ \text{ and } \ \y\in\H,
\end{equation*}
where the map $\mathcal{G}:\H\to\H$ is defined by
\begin{equation*}
\mathcal{G}(\mathfrak{p})=\mathcal{D}_{\mathfrak{p}} g(\mathfrak{p})=
\left\{
\begin{aligned}
-\mathfrak{p}, \ \text{ when } \|\mathfrak{p}\|_{\H}\leq \mathpzc{R},\\
-\mathpzc{R}\frac{\mathfrak{p}}{\|\mathfrak{p}\|_{\H}}, \ \text{ when } \|\mathfrak{p}\|_{\H}> 
\mathpzc{R}.
\end{aligned}
\right.
\end{equation*}
Moreover, the optimal cost is characterized by $\mathcal{J}^*_{\infty}(\y)=\upvarphi(\y)$.
\end{theorem}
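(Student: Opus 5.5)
We prove the theorem (existence of an optimal pair with the feedback law $\mathrm{U}^*=\mathcal{G}(\Q^{1/2}\mathcal{D}_{\y}\upvarphi(\Y^*))$, and $\mathcal{J}^*_\infty=\upvarphi$) by a verification argument built on the identity \eqref{713} of Lemma \ref{verif}.

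The first step is the lower bound. Fix an arbitrary $\mathrm{U}(\cdot)\in\mathcal{U}_{\mathrm{ad}}$ and rewrite the integrand of \eqref{713} pointwise in terms of $\mathfrak{p}:=\Q^{1/2}\mathcal{D}_{\y}\upvarphi(\Y(t,\y))$. Since $\|\mathrm{U}(t)\|_{\H}\leq\mathpzc{R}$, the distance from $-\mathfrak{p}$ to the ball $\B(0,\mathpzc{R})$ is $(\|\mathfrak{p}\|_{\H}-\mathpzc{R})^+$. Hence
\begin{align*}
\|\mathrm{U}(t)+\mathfrak{p}\|_{\H}^2-\uppsi(\|\mathfrak{p}\|_{\H}-\mathpzc{R})\geq0,
\end{align*}
with equality if and only if $\mathrm{U}(t)$ is the metric projection of $-\mathfrak{p}$ onto the ball, that is, $\mathrm{U}(t)=\mathcal{G}(\mathfrak{p})$. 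Substituting into \eqref{713} gives $\upvarphi(\y)\leq\mathcal{J}_\infty(\y,\mathrm{U})$ for every admissible control, and therefore $\upvarphi\leq\mathfrak{V}$.

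The second step is to realize equality by constructing the closed-loop system. This means solving \eqref{scbfabscon} with the drift term $\sqrt{\Q}\,\mathcal{G}(\Q^{1/2}\mathcal{D}_{\y}\upvarphi(\Y^*))$. The map $\mathcal{G}$ is bounded by $\mathpzc{R}$ and $1$-Lipschitz. If $\Q^{1/2}\mathcal{D}_{\y}\upvarphi$ is Lipschitz (this holds in the regularized setting of Lemma \ref{verif}, where $\upvarphi\in\C_b^2(\H)$), then the feedback is a bounded Lipschitz perturbation. Pathwise well-posedness then follows exactly as in Theorem \ref{extunjump}: the monotonicity estimates \eqref{3.4} and \eqref{monoC1} absorb the extra Lipschitz term through a Gr\"onwall argument. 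Define $\mathrm{U}^*(t):=\mathcal{G}(\Q^{1/2}\mathcal{D}_{\y}\upvarphi(\Y^*(t)))$. It is progressively measurable, satisfies $\|\mathrm{U}^*(t)\|_{\H}\leq\mathpzc{R}$, and so belongs to $\mathcal{U}_{\mathrm{ad}}$. By the equality case above, the integrand in \eqref{713} vanishes identically, so $\mathcal{J}_\infty(\y,\mathrm{U}^*)=\upvarphi(\y)\leq\mathfrak{V}(\y)$. Together with the first step this gives optimality and $\mathcal{J}^*_\infty=\upvarphi$.

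The main obstacle is that the mild solution from Theorem \ref{HJBeqn1} satisfies only $\Q^{1/2}\mathcal{D}_{\y}\upvarphi\in\L^2(\H,\eta;\H)$. It is neither pointwise defined nor Lipschitz, so both the use of Lemma \ref{verif} and strong solvability of the closed-loop equation are delicate. Following \cite[Section 13.4.2]{gdp7}, we would approximate in three stages.
\begin{enumerate}
\item Take Galerkin/regularized data $f_n$ and the corresponding $\upvarphi_n\in\C_b^2(\H)$ solving the regularized HJB equation.
\item Apply the two steps above to each $\upvarphi_n$.
\item Pass to the limit $n\to\infty$ using the contraction estimate from the proof of Theorem \ref{HJBeqn1}. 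This gives $\upvarphi_n\to\upvarphi$ and $\Q^{1/2}\mathcal{D}_{\y}\upvarphi_n\to\Q^{1/2}\mathcal{D}_{\y}\upvarphi$ in $\mathrm{L}^2(\H,\eta)$.
\end{enumerate}
A difficulty remains in the limit passage: the convergence holds in $\mathrm{L}^2(\H,\eta)$, whereas the value function is evaluated at a single initial point. Transferring the convergence to the trajectories requires the laws of $\Y(t,\y)$ to be controlled by $\eta$ uniformly over the admissible controls. The claim can therefore only be proved for $\eta$-a.e.\ $\y$, or in a weak/relaxed sense, unless this transfer is established separately.
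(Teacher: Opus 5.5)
Your overall route, the verification identity \eqref{713} followed by a closed-loop construction, is exactly what the paper intends: its entire proof is the remark that the theorem follows from Lemma \ref{verif} by the arguments of \cite[Section 13.4.2]{gdp7}. Your pointwise analysis of the integrand in \eqref{713} is correct. It is nonnegative because $\|\mathrm{U}(t)\|_{\H}\leq\mathpzc{R}$, and it vanishes exactly at $\mathrm{U}(t)=\mathcal{G}(\mathfrak{p})$. (Note that \eqref{713} and the formula for $\mathcal{G}$ both presuppose the drift $+\sqrt{\Q}\mathrm{U}$ used in \eqref{stpr}. With the sign written in \eqref{scbfabscon}, the minimizer would be the projection of $+\mathfrak{p}$.)

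The gap is the one you flag yourself, and your proposed repair does not close it. Lemma \ref{verif} and your strong closed-loop construction both need $\upvarphi\in\C_b^2(\H)$ with Lipschitz $\Q^{1/2}\mathcal{D}_{\y}\upvarphi$. The mild solution of Theorem \ref{HJBeqn1} only gives an $\eta$-equivalence class $\Q^{1/2}\mathcal{D}_{\y}\upvarphi\in\L^2(\H,\eta;\H)$. There is no reason for smoothing $f$ alone to produce $\C_b^2(\H)$ solutions: the hypotheses allow degenerate $\Q$, so there is in general no smoothing effect, the paper has no bounds on $\mathcal{D}_{\y}^2\Y$, and $g$ in \eqref{how1} is only $\C^{1,1}$. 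Regularizing the dynamics instead changes the control problem, $\eta$ and $\mathcal{N}_2$, so the contraction estimate of Theorem \ref{HJBeqn1} no longer compares $\upvarphi_n$ with $\upvarphi$.

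Two ingredients are missing.

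\emph{First, approximate inside the fixed space $\mathrm{L}^2(\H,\eta)$ through the core.} Since $\upvarphi\in\D(\mathcal{N}_2)$, Theorem \ref{thm4.7} and \eqref{dn2d1} give $\upvarphi_n\in\mathscr{E}_{\A}(\H)$ with $\upvarphi_n\to\upvarphi$, $\mathcal{N}_0\upvarphi_n\to\mathcal{N}_2\upvarphi$ and $\Q^{1/2}\mathcal{D}_{\y}\upvarphi_n\to\Q^{1/2}\mathcal{D}_{\y}\upvarphi$. It\^o's formula is legitimate for $\upvarphi_n$ along any controlled trajectory. So \eqref{713} holds for $\upvarphi_n$ up to an error driven by $E_n:=\kappa\upvarphi_n-\mathcal{N}_0\upvarphi_n+g(\Q^{1/2}\mathcal{D}_{\y}\upvarphi_n)-f$, and $E_n\to0$ in $\mathrm{L}^2(\H,\eta)$ because $\upvarphi$ solves \eqref{555}.

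\emph{Second, the ``transfer'' you ask for is Girsanov's theorem.} The structure $\sqrt{\Q}\mathrm{U}$ with $\|\mathrm{U}\|_{\H}\leq\mathpzc{R}$ makes it available.
\begin{enumerate}
\item An equivalent change of measure acting only on $\W$ (so $\pi$ keeps its compensator) turns the controlled solution into the uncontrolled one in law.
\item Cauchy--Schwarz then gives, uniformly in $\mathrm{U}$, the bound $\E|F(\Y(t,\y;\mathrm{U}))|\leq e^{\mathpzc{R}^2t/2}\big(\mathtt{P}_t|F|^2(\y)\big)^{1/2}$. By invariance, the right-hand side has $\mathrm{L}^1(\H,\eta)$-norm at most $e^{\mathpzc{R}^2t/2}\|F\|_{\mathrm{L}^2(\H,\eta)}$.
\item The discount absorbs this growth, since Theorem \ref{HJBeqn1} already forces $\kappa>\|g\|_{\mathrm{Lip}}^2=\mathpzc{R}^2$.
\item Apply this to $F=E_n$ and to $F=\|\Q^{1/2}\mathcal{D}_{\y}(\upvarphi_n-\upvarphi)\|_{\H}$; the integrand of \eqref{713} is Lipschitz in $\mathfrak{p}$ when $\|\mathrm{U}\|_{\H}\leq\mathpzc{R}$. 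This yields \eqref{713} for $\eta$-a.e.\ $\y$, simultaneously for all controls.
\item The same change of measure solves the closed-loop equation weakly for any bounded Borel version of the feedback. No Lipschitz property is needed.
\end{enumerate}

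The outcome is an optimal pair in the weak formulation for $\eta$-a.e.\ $\y$. Your closing caveat is therefore essentially right. The paper's one-line appeal to \cite{gdp7} does not establish the statement for every $\y\in\H$ either.
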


\end{appendix}

	\medskip
\noindent
\textbf{Acknowledgments:} The first author gratefully acknowledges financial support from Ministry of Education, Government of India - MHRD for financial assistance. M. T. Mohan's research was supported by the National Board of Higher Mathematics (NBHM), Department of Atomic Energy, Government of India (Project No. 02011/13/2025/NBHM(R.P)/R\&D II/1137).

\end{document}